\appto\TPTnoteSettings{\linespread{1}\footnotesize} % small footnote text if *not* flushleft
   \g@addto@macro\TPT@defaults{\linespread{1}\footnotesize} 
\newcolumntype{$}{>{\global\let\currentrowstyle\empty}}
\newcolumntype{^}{>{\currentrowstyle}}
\newcolumntype{C}{%
  >{\rowstyle{}}c%
}
\newcolumntype{B}{%
  >{\currentrowstyle}S[detect-weight]%
}
\newcommand{\rowstyle}[1]{\gdef\currentrowstyle{#1}%
  #1\ignorespaces
}
\numberwithin{equation}{section}
\theoremstyle{plain}
\newtheorem{assumption}{Assumption}
\newtheorem{theorem}{Theorem}[section]
\newtheorem{lemma}[theorem]{Lemma}
\newtheorem{proposition}[theorem]{Proposition}
\theoremstyle{definition}
\newcommand{\pconv}{\xrightarrow{p}}
\newcommand{\dconv}{\xrightarrow{d}}
\newcommand{\tr}{^{\top}}
\crefname{conjecture}{Conjecture}{Conjectures}
\crefname{section}{Section}{Sections}
\crefname{subsection}{Section}{Sections}
\crefname{subsubsection}{Section}{Sections}
\Crefname{conjecture}{Conjecture}{Conjectures}
\Crefname{section}{Section}{Sections}
\Crefname{subsection}{Section}{Sections}
\Crefname{subsubsection}{Section}{Sections}
\crefname{appendix}{Appendix}{Appendices}
\crefname{subappendix}{Appendix}{Appendices}
\crefname{subsubappendix}{Appendix}{Appendices}
\Crefname{appendix}{Appendix}{Appendices}
\Crefname{subappendix}{Appendix}{Appendices}
\Crefname{subsubappendix}{Appendix}{Appendices}
\crefname{equation}{}{}
\Crefname{equation}{Equation}{Equations}
\crefname{assumption}{}{}
\Crefname{assumption}{Assumption}{Assumptions}
\newcommand{\crefrangeconjunction}{--}
\newcommand{\citeposs}[1]{\citeauthor{#1}'s (\citeyear{#1})}
\newcommand{\numnornd}[1]{\num[round-mode=off,group-digits=integer]{#1}} % no rounding, but group-separator
\newcommand{\matf}[1]{#1} %matrix formatting
\newcommand{\vecf}[1]{#1} %vector formatting
\DeclareMathOperator{\Var}{Var}
\DeclareMathOperator{\Corr}{Corr}
\newcommand{\Varp}[1]{\Var\left(#1\right)}
\newcommand{\R}{{\mathbb R}}
\DeclareMathOperator{\E}{E}
\DeclareMathOperator{\Q}{Q}
\let\Pr\relax \DeclareMathOperator{\Pr}{P} %comment out if Pr desired
\DeclareMathOperator*{\argmin}{arg\,min}
\DeclareMathOperator{\1}{\mathds{1}}
\newcommand{\Ind}[1]{\1\left\{#1\right\}}
\newcommand{\Normal}{\mathrm{N}}
\newcommand{\Normalp}[2]{\Normal\left(#1,#2\right)}
\newcommand{\UnifDist}{\textrm{Unif}}
\newcommand{\diff}{d} %differential operator: easy to change to upright if needed.
\newcommand{\pD}[2]{\frac{\partial #1}{\partial #2}}
\newcommand{\D}[2]{\frac{\diff #1}{\diff #2}}
\newcommand{\independenT}[2]{\mathrel{\rlap{$#1#2$}\mkern2mu{#1#2}}}
\newcommand\independent{\protect\mathpalette{\protect\independenT}{\perp}} 
\providecommand{\absbig}[1]{\left\lvert#1\right\rvert}
\providecommand{\normbig}[1]{\left\lVert#1\right\rVert}
\providecommand{\abs}[1]{\lvert#1\rvert}
\providecommand{\norm}[1]{\lVert#1\rVert}
\let\originalleft\left
\let\originalright\right
\renewcommand{\left}{\mathopen{}\mathclose\bgroup\originalleft}
\renewcommand{\right}{\aftergroup\egroup\originalright}
  \renewenvironment{thebibliography}[1]{%
    \begin{oldthebibliography}{#1}%
      \setlength{\parskip}{0ex}%
      \setlength{\itemsep}{0ex}%
  }%
  {%
    \end{oldthebibliography}%
  }
\title{Smoothed {GMM} for quantile models\footnote{The authors would like to express their appreciation to 
Ron Gallant, Duke Kao, Carlos Lamarche, Xiaodong Liu, Carlos Martins-Filho, Eric Renault, Pedro Sant'Anna, and other 
seminar and conference participants at the University of Illinois at Urbana--Champaign, Pennsylvania State University, University of Iowa, Brown University, University of Connecticut, University of Colorado (Boulder), 2017 Midwest Econometrics Group, and the 2017 North American Summer Meeting of the Econometric Society, 
for helpful comments, references, and discussions. Code is provided for all methods, simulations, and applications.}}
\def\thisisjusttotrickthesyntaxcheck{\begin{tabular}}
\def\and{\end{tabular}%
  \hskip 0.9em \@plus.17fil\relax  %0.9em ok for 1in margin; original is 1em
  \begin{tabular}[t]{c}}
\def\thisisalsojusttotrickthesyntaxcheck{\end{tabular}}
\author{Luciano de Castro\thanks{Department of Economics, University of Iowa. E-mail: \texttt{decastro.luciano@gmail.com}}
\and
Antonio F.\ Galvao\thanks{Department of Economics, University of Arizona. E-mail: \texttt{agalvao@email.arizona.edu}}
\and
David M.\ Kaplan\thanks{Department of Economics, University of Missouri. E-mail: \texttt{kaplandm@missouri.edu}}
\and
Xin Liu\thanks{Department of Economics, University of Missouri. E-mail: \texttt{xl6f6@mail.missouri.edu}}
}
\begin{document}

\maketitle

\begin{abstract}
This paper develops theory for feasible estimators of finite-dimensional parameters identified by general conditional quantile restrictions, under much weaker assumptions than previously seen in the literature. 
This includes instrumental variables nonlinear quantile regression as a special case. 
More specifically, we consider a set of unconditional moments implied by the conditional quantile restrictions, providing conditions for local identification. 
Since estimators based on the sample moments are generally impossible to compute numerically in practice, we study feasible estimators based on smoothed sample moments. 
We propose a method of moments estimator for exactly identified models, as well as a generalized method of moments estimator for over-identified models. 
We establish consistency and asymptotic normality of both estimators under general conditions that allow for weakly dependent data and nonlinear structural models. 
Simulations illustrate the finite-sample properties of the methods. 
Our in-depth empirical application concerns the consumption Euler equation derived from quantile utility maximization. 
Advantages of the quantile Euler equation include robustness to fat tails, decoupling of risk attitude from the elasticity of intertemporal substitution, and log-linearization without any approximation error. 
For the four countries we examine, the quantile estimates of discount factor and elasticity of intertemporal substitution are economically reasonable for a range of quantiles above the median, even when two-stage least squares estimates are not reasonable.

% \vspace*{1cm}
\vspace*{0.5\baselineskip}

\noindent {\em Keywords}: instrumental variables, nonlinear quantile regression, quantile utility maximization.

% \vspace*{0.5cm}

\noindent {\em JEL classification codes:} C31, C32, C36
% C31=multi eqn QR
% C32=multi eqn "dynamic" QR
% C36=multiple equation IV
% C13, C23. 
% C26=single eqn IV
% C21=single eqn QR
% C22=single eqn "dynamic" QR
% C87=econometric "software"? we have R code...
\end{abstract}

\thispagestyle{empty}

\newpage

\baselineskip19.5pt
\pagenumbering{arabic}

\section{Introduction}

Since the seminal work of \citet{KoenkerBassett78}, quantile regression (QR) has attracted considerable interest in statistics and econometrics. 
QR estimates conditional quantile functions that provide insight into heterogeneous effects of policy variables. 
This is especially valuable for program evaluation studies, where these methods help analyze how treatments or social programs affect the outcome's distribution. 
Nevertheless, endogeneity has been a pervasive concern in economics due to simultaneous causality, omitted variables, measurement error, self-selection, and estimation of equilibrium conditions, among other causes. 
Extending the standard QR, \citet{ChernozhukovHansen05, ChernozhukovHansen06, ChernozhukovHansen08} present results on identification, estimation, and inference for an instrumental variables QR (IVQR) model that allows for endogenous regressors.\footnote{We refer to \citet{ChernozhukovHansenWuthrich17} for an overview of IVQR. They discuss alternative, complementary QR models with endogeneity in Section 1.2.5, specifically the triangular system and local quantile treatment effect (LQTE) model. Even in the LQTE model, the IVQR estimator has a meaningful interpretation; see \citet{Wuthrich16b}.}
However, computational difficulties have limited practical estimators to linear models with iid data (discussed below).

Under weaker conditions than prior IVQR papers, we develop theory around feasible smoothed estimators.%
\footnote{The methods developed in this paper are also related to those for semiparametric and nonparametric models. Identification, estimation, and inference of general (non-smooth) conditional moment restriction models have received much attention in the econometrics literature, as in \citet[\S7]{NeweyMcFadden94}, \citet{ChenLintonvanKeilegom03}, \citet{ChenPouzo09,ChenPouzo12}, and \citet{ChenLiao15}, for example. 
However, theoretical results are only for unsmoothed estimators that are often not computationally feasible in practice.} 
We consider a set of unconditional moments implied by a general parametric conditional quantile restriction and study exactly identified and over-identified models. 
Under misspecification of the conditional model, our results still hold for the pseudo-true parameter solving the unconditional moments, complementing the results in \citet{AngristChernozhukovFernandez-Val06} for QR. 
For identification, we provide sufficient conditions for local identification based on these moments. 
For estimation, since using unsmoothed sample moments is generally intractable, we study smoothed estimators that compute quickly and may have improved precision \citep{KaplanSun17}. 
Specifically, we develop smoothed method of moments (MM) and smoothed generalized method of moments (GMM) quantile estimators for exactly identified and over-identified models, respectively.\footnote{QR and IVQR have been discussed as GMM by \citet[\S III.A]{Buchinsky98} and \citet[\S1.3.1]{ChernozhukovHansenWuthrich17}, among others.} 
Unlike prior IVQR estimation papers, we allow for weakly dependent data and nonlinear structural models when establishing the large sample properties of the estimators, namely, consistency and asymptotic normality.

Our in-depth empirical study estimates a quantile Euler equation using aggregate time series data. 
This equation is derived from a quantile utility maximization model. This model is an interesting alternative to the standard expected utility model because it is robust to fat tails, allows heterogeneity through the quantiles, decouples the elasticity of intertemporal substitution from risk attitude, and results in an Euler equation that does not suffer from any approximation error when log-linearized.\footnote{Heavy tails in consumption data have been documented recently by \citet{TodaWalsh15,TodaWalsh17}.} 
Quantile preferences were first studied by \citet{Manski88} and were axiomatized by \citet{Chambers09} and \citet{Rostek10}. 
\Citet{deCastroGalvao17} use  quantile preferences in a dynamic economic setting and provide a comprehensive analysis of a dynamic rational quantile model. 
They derive the policy function (Euler equation) as a nonlinear conditional quantile restriction. 
% NEW:
Consequently, we may use smoothed GMM to estimate the structural parameters, including the elasticity of intertemporal substitution (EIS). 
% REMOVED:
% Structural parameters from the quantile utility problem can be estimated by our smoothed GMM method. 
% We employ a variation of the standard economy model of \citet{Lucas78} where the economic agents decide on the intertemporal consumption and savings (assets to hold) over an infinity horizon economy, 
% maximizing the discounted present value of the stream of quantile utilities, subject to a linear budget constraint. 
% The decision generates a policy function, which is used to estimate the parameters of interest for a given utility function, namely the discount factor and the elasticity of intertemporal substitution (EIS).
Numerous papers have estimated the EIS, e.g., \citet{HansenSingleton83}, \citet{Hall88}, \citet{CampbellMankiw89}, \citet{OgakiReinhart98}, and \citet{Yogo04}. 
For the four countries we study, the smoothed quantile estimates of the discount factor and EIS are economically reasonable for a range of quantiles above the median, including cases where the 2SLS estimates are not reasonable.

For IVQR estimation of the model in \citet{ChernozhukovHansen05}, the literature lacks results for feasible estimators allowing nonlinear structural models and dependent data.\footnote{For nonlinear QR (no IV), see \citet[\S2.2]{Powell94}, \citet{OberhoferHaupt15}, and references therein.} 
The following are iid sampling assumptions: 
Condition (i) on p.\ 310 in \citet{ChernozhukovHong03}, 
Assumption 2.R1 in \citet{ChernozhukovHansen06}, and 
Assumption 1 in \citet{KaplanSun17}. 
A nonlinear structural model is allowed by the computationally demanding\footnote{\Citet{ChernozhukovHansenWuthrich17} comment, ``This approach bypasses the need to optimize a non-convex and non-smooth criterion at the cost of needing to design a sampler that adequately explores the quasi-posterior in a reasonable amount of computation time.''} Markov Chain Monte Carlo estimator in \citet[Ex.\ 3, p.\ 297ff.]{ChernozhukovHong03}, but linear-in-parameters models are required in 
(3.4) in \citet{ChernozhukovHansen06} and 
Assumption 1 in \citet{KaplanSun17}. 
Additionally, 
\citet{ChernozhukovHansen06} note, ``The computational advantages of our estimator rapidly diminish as the number of endogenous variables increases'' (p.\ 501). 
Even if only one observed variable is endogenous, this restriction limits the use of interactions and transformations (like polynomial terms). 
The results from \citet{ChernozhukovHansen06} have been extended in unpublished work by \citet{SuYang11} to non-iid data for use with a correctly specified linear spatial autoregressive model, treating regressors and instruments as nonstochastic. %\footnote{Thanks to Xiaodong Liu for showing us this work.} 
\Citet{ChenLee17} propose an estimator for linear-in-parameters IVQR models using mixed integer quadratic programming, but computation is very slow: with only four parameters and $n=100$ observations, their Table 1 shows average computation times for IV median regression exceeding five minutes. 
\Citet{Wuthrich17a} proposes an estimator without assuming linearity but only for a binary treatment (and iid data). 
From a Bayesian perspective, \citet{LancasterJun10} allow nonlinear models but only iid data (and require computation over a grid of coefficient values or else by Markov Chain Monte Carlo). 
We relax both iid sampling and linearity in our formal results, while maintaining the computational simplicity, speed, and scalability of the method in \citet{KaplanSun17}, and adding the efficiency of two-step GMM.

Historically, smoothing IVQR moment conditions was proposed first in unpublished notes by \citet{MaCurdyHong99}, mentioned later in (also unpublished) \citet[\S2.4]{MaCurdyTimmins01} and the handbook chapter by \citet[\S5]{MaCurdy07}. 
\citet{Whang06} and \citet{Otsu08} use moment smoothing for empirical likelihood QR. 
The related idea of smoothing non-differentiable \emph{objective functions} goes back to \citet[\S3]{Amemiya82}, if not earlier, and has been used for QR by \citet{Horowitz98}, \citet{GalvaoKato16}, and \cite{FernandesGuerreHorta17}, among others. 
% \citet[\S2.2]{KaplanSun17} argue that smoothing the moment conditions (instead of objective function) is better even for QR, in terms of simplicity and bias. 
% In non-quantile models, indicator function smoothing is used by \citet{Horowitz92} for maximum score and by \citet{BruinsEtAl15} for discrete choice indirect inference. %\footnote{Thanks to Eric Renault for the latter reference.}

\Cref{sec:ID} presents the model and discusses identification. 
\Cref{sec:est} develops the smoothed MM and GMM estimators, whose asymptotic properties are provided in \cref{sec:asy}. 
\Cref{sec:sim} contains simulation results. 
In \cref{sec:EIS} we illustrate the new approach empirically. 
\Cref{sec:conclusion} suggests directions for future research. 
The appendix collects all proofs.

We conclude this introduction with some remarks about the notation. 
Random variables and vectors are uppercase ($Y$, $X$, etc.), while non-random values are lowercase ($y$, $x$); 
% for scalar/vector/matrix variable formatting, $\vecf{X}$ is a random vector with elements $X_j$, $\vecf{x}$ is a non-random vector with elements $x_j$, $Y$ and $y$ are random and non-random scalars, respectively, and $\matf{M}$ and $\matf{m}$ are random and non-random matrices with row $i$, column $j$ elements $M_{ij}$ and $m_{ij}$. 
for vector/matrix multiplication, all vectors are treated as column vectors. 
Also, $\Ind{\cdot}$ is the indicator function, 
$\E(\cdot)$ expectation, 
$\Q_\tau(\cdot)$ the $\tau$-quantile, 
$\Pr(\cdot)$ probability, 
% $\doteq$ ``is equal to, up to smaller-order terms,'' 
% $\asymp$ ``has exact (asymptotic) rate/order of,'' 
and $\Normalp{\mu}{\sigma^2}$ the normal distribution. 
For vectors, $\normbig{\cdot}$ is the Euclidean norm. 
Acronyms used include those for 
central limit theorem (CLT), 
continuous mapping theorem (CMT), 
elasticity of intertemporal substitution (EIS), 
generalized method of moments (GMM), 
% [smoothed] instrumental variables quantile regression ([S]IVQR), 
mean value theorem (MVT), 
probability density function (PDF), 
uniform law of large numbers (ULLN), and 
weak law of large numbers (WLLN).

\section{Model and identification}
\label{sec:ID} 

We consider the following nonlinear conditional quantile model
\begin{equation}\label{eq:qrmodel}
\Q_{\tau} [ \Lambda\bigl(\vecf{Y}_i,\vecf{X}_i,\vecf{\beta}_{0\tau}\bigr) \mid \vecf{Z}_i] = 0,
\end{equation}
where 
$\vecf{Y}_i\in\mathcal{Y}\subseteq\R^{d_Y}$ is the endogenous variable vector, 
$\vecf{Z}_i\in\mathcal{Z}\subseteq\R^{d_Z}$ is the full instrument vector that contains $\vecf{X}_i\in\mathcal{X}\subseteq\R^{d_X}$ as a subset, 
$\Lambda(\cdot)$ is the ``residual function'' that is known up to the finite-dimensional parameter of interest $\vecf{\beta}_{0\tau} \in \mathcal{B} \subseteq \R^{d_\beta}$, 
and $\tau \in (0,1)$ is the quantile index. 
The model in \eqref{eq:qrmodel} can be represented by conditional moment restrictions as
\begin{equation}\label{eq:qrgmmmodel}
\vecf{0} = 
\E \left[ \Ind{\Lambda\bigl(\vecf{Y}_i,\vecf{X}_i,\vecf{\beta}_{0\tau}\bigr) \le 0} - \tau \mid \vecf{Z}_i \right], 
\end{equation}
where $\Ind{\cdot}$ is the indicator function.

To estimate $\vecf{\beta}_{0\tau}$, we use unconditional moments implied by \cref{eq:qrgmmmodel}:
\begin{equation}\label{eqn:moments}
\vecf{0} = 
\E\left\{ \vecf{Z}_i \left[ \Ind{\Lambda\bigl(\vecf{Y}_i,\vecf{X}_i,\vecf{\beta}_{0\tau}\bigr) \le 0} - \tau \right] \right\} .
\end{equation}
\Citet{KaplanSun17} consider a special case of \cref{eqn:moments} with $\Lambda\bigl(\vecf{Y},\vecf{X},\vecf{\beta}\bigr)=Y_1-\vecf{Y}_{-1}\tr \vecf{\beta}_1-\vecf{X}\tr \vecf{\beta}_2$, where $Y_1$ is the outcome and $\vecf{Y}_{-1}=(Y_2,\ldots,Y_{d_Y})$ are endogenous regressors. 
We take $\vecf{Z}_i$ as given; see \citet[\S2.1, pp.\ 108--110]{KaplanSun17} for discussion of optimal instruments. 
Our asymptotic results assume only \cref{eqn:moments}, so they are robust to misspecification of the structural model in \cref{eqn:moments}, treating $\vecf{\beta}_{0\tau}$ as the pseudo-true parameter satisfying \cref{eqn:moments}.

Given \cref{eqn:moments}, $\vecf{\beta}_{0\tau}$ is ``locally identified'' if there exists a neighborhood of $\vecf{\beta}_{0\tau}$ within which only $\vecf{\beta}_{0\tau}$ satisfies \cref{eqn:moments}. 
This holds if the partial derivative matrix of the right-hand side of \cref{eqn:moments} with respect to the $\vecf{\beta}$ argument is full rank; see, e.g., \citet[p.\ 787]{ChenChernozhukovLeeNewey14}. 
This full rank condition is formally stated below in \Cref{a:U}(ii). 
The following proposition states the local identification result.

\begin{proposition}\label{prop:local-ID}
Given \cref{eqn:moments} and (the full rank) \Cref{a:U}(ii), $\vecf{\beta}_{0\tau}$ is locally identified.
\end{proposition}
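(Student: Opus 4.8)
The plan is to treat \Cref{prop:local-ID} as a textbook local-identification argument for a finite-dimensional unconditional moment model, reducing everything to full column rank of a single Jacobian matrix evaluated at $\vecf{\beta}_{0\tau}$. First I would give the population moment map a name, defining
\[
  \vecf{g}(\vecf{\beta}) := \E\left\{ \vecf{Z}_i \left[ \Ind{\Lambda\bigl(\vecf{Y}_i,\vecf{X}_i,\vecf{\beta}\bigr) \le 0} - \tau \right] \right\},
\]
a map from $\mathcal{B}\subseteq\R^{d_\beta}$ into $\R^{d_Z}$. With this notation \cref{eqn:moments} says $\vecf{g}(\vecf{\beta}_{0\tau})=\vecf{0}$, and ``local identification'' is precisely the assertion that $\vecf{\beta}_{0\tau}$ is the \emph{only} zero of $\vecf{g}$ inside some neighborhood.

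Second, I would establish that $\vecf{g}$ is continuously differentiable in a neighborhood of $\vecf{\beta}_{0\tau}$ and pin down its Jacobian $\vecf{G}(\vecf{\beta}):=\pD{\vecf{g}(\vecf{\beta})}{\vecf{\beta}\tr}$. The integrand contains the non-differentiable indicator, but its conditional expectation given $\vecf{Z}_i$ is a conditional CDF of $\Lambda\bigl(\vecf{Y}_i,\vecf{X}_i,\vecf{\beta}\bigr)$ evaluated at $0$, which is smooth in $\vecf{\beta}$ under the regularity conditions that already underlie \Cref{a:U}(ii) (the very existence of the Jacobian that it declares full rank). A dominated-convergence argument then justifies differentiating under the expectation and delivers a continuous $\vecf{G}(\cdot)$. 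Writing $\vecf{G}:=\vecf{G}(\vecf{\beta}_{0\tau})$, \Cref{a:U}(ii) supplies exactly that $\vecf{G}$ has full column rank $d_\beta$, equivalently that $\vecf{G}\tr\vecf{G}$ is nonsingular.

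Third, I would convert full rank into local injectivity. Applying the mean value theorem to each coordinate of $\vecf{g}$ and using $\vecf{g}(\vecf{\beta}_{0\tau})=\vecf{0}$ yields $\vecf{g}(\vecf{\beta})=\bar{\vecf{G}}(\vecf{\beta})(\vecf{\beta}-\vecf{\beta}_{0\tau})$, where the $k$th row of $\bar{\vecf{G}}(\vecf{\beta})$ is the gradient of the $k$th coordinate evaluated at an intermediate point on the segment joining $\vecf{\beta}$ and $\vecf{\beta}_{0\tau}$, so that $\bar{\vecf{G}}(\vecf{\beta})\to\vecf{G}$ as $\vecf{\beta}\to\vecf{\beta}_{0\tau}$. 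Since full column rank is preserved under small perturbations ($d_\beta$ is the maximal possible column rank, and rank is lower semicontinuous), $\bar{\vecf{G}}(\vecf{\beta})$ keeps full column rank on a sufficiently small neighborhood $N$; hence any $\vecf{\beta}\in N$ with $\vecf{g}(\vecf{\beta})=\vecf{0}$ must satisfy $\vecf{\beta}-\vecf{\beta}_{0\tau}=\vecf{0}$, which is the claim. Alternatively, with $\vecf{G}$ full rank one may simply invoke the finite-dimensional local-identification result of \citet[p.\ 787]{ChenChernozhukovLeeNewey14}.

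I expect the only real obstacle to be the smoothness step, not the identification conclusion: because the moment integrand is non-smooth in $\vecf{\beta}$, the differentiability of $\vecf{g}$ and the continuity of $\vecf{G}(\cdot)$ must be earned through the conditional-density and dominated-convergence content carried by \Cref{a:U}. Once $\vecf{g}\in C^1$ with a full-rank Jacobian is in hand, the passage to local uniqueness of the zero is routine.
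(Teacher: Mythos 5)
Your argument is correct, but it is a genuinely different presentation from the paper's: the paper's entire proof is the single line ``See Appendix A.1 of \citet{ChenChernozhukovLeeNewey14},'' i.e., it outsources the whole argument to the cited reference, whereas you reconstruct that argument from scratch (and only mention the citation as a fallback at the end). Your reconstruction --- mean value expansion of each coordinate of $\vecf{g}$ around $\vecf{\beta}_{0\tau}$, convergence of the intermediate-point Jacobians $\bar{\vecf{G}}(\vecf{\beta})\to\matf{G}$, and preservation of full column rank under small perturbations to force $\vecf{\beta}=\vecf{\beta}_{0\tau}$ for any nearby zero --- is exactly the standard finite-dimensional local-identification proof that underlies the cited result, so the two approaches are mathematically the same theorem; yours just makes the mechanism visible. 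What your version buys is that it exposes the one genuinely delicate point, which the paper's citation sweeps under the rug: \Cref{a:U}(ii) as literally stated only asserts existence and full rank of the derivative \emph{at} $\vecf{\beta}_{0\tau}$, while both your MVT step and the Chen--Chernozhukov--Lee--Newey theorem need $\vecf{g}$ to be differentiable on a neighborhood with Jacobian continuous (or at least convergent) at $\vecf{\beta}_{0\tau}$; this is where the smoothness of the conditional densities in \Cref{a:U}(i), together with the dominated-convergence argument you sketch for differentiating under the expectation, has to do real work. You correctly flag this as the only nontrivial step; it would strengthen the write-up to carry it out explicitly rather than asserting it, but the identification logic itself is sound and matches the paper's intent.
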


Global identification is notoriously more difficult to establish, 
% and depends on the application, 
% ; see remarks on \Cref{a:B} below, and 
although \citet[Thm.\ 2 and App.\ C]{ChernozhukovHansen05} provide some results for IVQR.

To fix ideas, we discuss two examples of structural models in the form of \cref{eq:qrmodel}. 
% 
% CH06-related ID
The first example is a random coefficient model as in \citet{ChernozhukovHansen05,ChernozhukovHansen06}. 
% \footnote{Our $\vecf{Y}$ is equivalent to their $Y$ and $D$ combined; our $\vecf{Z}$ combines their $\vecf{X}$ and $\vecf{Z}$.} 
Let $D$ be an endogenous ``treatment'' (like education), $U\sim\UnifDist(0,1)$ an unobserved variable (like ability), $\vecf{X}$ exogenous regressors, and $\tilde{Y}_d = q\bigl( d, x, \beta_0(U) \bigr)$ potential outcomes (like wage), where $q(\cdot)$ is known and $\vecf{\beta}_0(U)$ is a random coefficient vector depending on $U$. 
Let $\vecf{\beta}_{0\tau} = \vecf{\beta}_0(\tau)$, $Y = (\tilde{Y}_D, D)$, and $\Lambda(\vecf{Y},\vecf{X},\vecf{\beta}) = \tilde{Y}_D - q(D,\vecf{X},\vecf{\beta})$. 
Under their Assumptions A1--A5, Theorem 1 in \citet{ChernozhukovHansen05} yields a conditional quantile restriction like in \cref{eq:qrmodel}: 
\begin{equation*}%\label{eqn:CH05-cond-quantile}
\tau 
% = \E\left[ \Ind{\Lambda\bigl(\vecf{Y},\vecf{X},\vecf{\beta}_{0\tau}\bigr) \le 0} \mid \vecf{Z} \right]
= \Pr\left[ \Lambda\bigl(\vecf{Y},\vecf{X},\vecf{\beta}_{0\tau}\bigr) \le 0 \mid \vecf{Z} \right] 
.
\end{equation*}

Another example of a structural model applies to our empirical application in \cref{sec:EIS}. 
Under certain assumptions, if individuals maximize the $\tau$-quantile of utility instead of expected utility, then the resulting consumption Euler equation can be written in the form 
\begin{equation*}%\label{eqn:qrmodel}
\Q_\tau[ \beta_{0\tau} (1+r_{t+1}) U'(C_{t+1})/U'(C_t) \mid \Omega_t ] = 1 , 
\end{equation*}
where 
$\beta$ is the discount factor, 
$r_t$ is real interest rate, 
$C_t$ is consumption, 
$U(\cdot)$ is the utility function, 
$\Omega_{t}$ is the information set, 
and $\Q_{\tau}[ W_t \mid \Omega_{t}]$ denotes the conditional $\tau$-quantile of $W_t$ given $\Omega_{t}$. 
With isoelastic utility and instruments $\vecf{Z}_t$ chosen from $\Omega_t$, we obtain \cref{eq:qrmodel}:
\begin{equation*}
\Q_\tau[ \beta_{0\tau} (1+r_{t+1}) (C_{t+1}/C_t)^{-\gamma_{0\tau}} - 1 \mid \vecf{Z}_t ] = 0 . 
\end{equation*}

\section{The smoothed {MM} and {GMM} estimators}
\label{sec:est}

This section presents smoothed estimators based on the moment conditions in \cref{eqn:moments}. 
The smoothed MM and smoothed GMM estimators are designed for exactly identified and over-identified models, respectively. 
We now introduce notation, followed by the estimators.

Let the population map $\vecf{M} \colon \mathcal{B} \times \mathcal{T} \mapsto \R^{d_Z}$ be 
\begin{align}\label{eqn:def-M}
\vecf{M}(\vecf{\beta},\tau) 
&\equiv \E\left[ \vecf{g}^u_i(\vecf{\beta},\tau) \right] 
,\\ \label{eqn:def-gu}
\vecf{g}^u_i(\vecf{\beta},\tau)
&\equiv \vecf{g}^u(\vecf{Y}_i,\vecf{X}_i,\vecf{Z}_i,\vecf{\beta},\tau)
 \equiv \vecf{Z}_i\left[ \Ind{\Lambda\left(\vecf{Y}_i,\vecf{X}_i,\vecf{\beta}\right) \le 0} - \tau \right] 
, 
\end{align}
where superscript ``$u$'' denotes ``unsmoothed.'' 
The population moment condition \cref{eqn:moments} is 
\begin{equation}\label{eqn:def-beta0}
\vecf{0} = \vecf{M}(\vecf{\beta}_{0\tau},\tau) . 
\end{equation}

Without smoothing, the corresponding sample moments simply replace population expectation $\E(\cdot)$ with sample expectation $\hat{\E}(\cdot)$, i.e., the sample average. 
Analogous to the population map $M(\cdot)$ in \cref{eqn:def-M}, the unsmoothed sample map is 
\begin{equation}\label{eqn:def-Mu-hat}
\hat{\vecf{M}}_n^u\left(\vecf{\beta},\tau\right)
\equiv \hat{\E}\left[ \vecf{g}^u(\vecf{Y},\vecf{X},\vecf{Z},\vecf{\beta},\tau) \right] 
\equiv \frac{1}{n}\sum_{i=1}^{n} \vecf{g}^u_i(\vecf{\beta},\tau) 
.
\end{equation}
The well-known computational difficulty \citep[e.g.,][Fig.\ 1(a) and Ex.\ 3, p.\ 297]{ChernozhukovHong03} of minimizing a GMM criterion based on $\hat{\vecf{M}}_n^u( \vecf{\beta}, \tau )$ comes from the discontinuous indicator function $\Ind{\cdot}$ inside $\vecf{g}^u_i(\vecf{\beta},\tau)$. 
To address this difficulty, we smooth the indicator function.

With smoothing (no ``$u$'' superscript), the sample analogs of \cref{eqn:def-M,eqn:def-gu} are 
\begin{align}
\begin{split}\label{eqn:def-M-hat}
\vecf{g}_{ni}(\vecf{\beta},\tau)
&\equiv \vecf{g}_n(\vecf{Y}_i,\vecf{X}_i,\vecf{Z}_i,\vecf{\beta},\tau)
 \equiv \vecf{Z}_i \bigl[ \tilde{I}\bigl(-\Lambda(\vecf{Y}_i,\vecf{X}_i,\vecf{\beta}) / h_n \bigr) - \tau \bigr] ,  \\
\hat{\vecf{M}}_n\left(\vecf{\beta},\tau\right)
&\equiv \frac{1}{n}\sum_{i=1}^{n} \vecf{g}_{ni}(\vecf{\beta},\tau) , 
\end{split}%\\
%\label{eqn:def-est-MM}
%\hat{\vecf{M}}_n(\hat{\vecf{\beta}}_\tau,\tau) 
%&= \vecf{0} ,
\end{align}
where 
$h_n$ is a bandwidth (sequence) and 
$\tilde{I}(\cdot)$ is a smoothed version of the indicator function $\Ind{\cdot \ge 0}$. 
% As a mnemonic, $I$ may stand for ``indicator-like function'' or ``integral of kernel.'' 
The $\tilde{I}(\cdot)$ in \cref{fig:Itilde} has been used by \citet{Horowitz98}, \citet{Whang06}, and \citet{KaplanSun17}, who use the fact that its derivative is a fourth-order kernel to establish higher-order improvements in the linear iid setting. 
The double subscript on $\vecf{g}_{ni}$ is a reminder that we have a triangular array setup because $\vecf{g}_{ni}$ depends on the bandwidth sequence $h_n$ in addition to 
$(\vecf{Y}_i,\vecf{X}_i,\vecf{Z}_i)$.

\begin{figure}[htbp]
\centering
\includegraphics[width=0.6\textwidth,clip=true,trim=35 35 20 70]{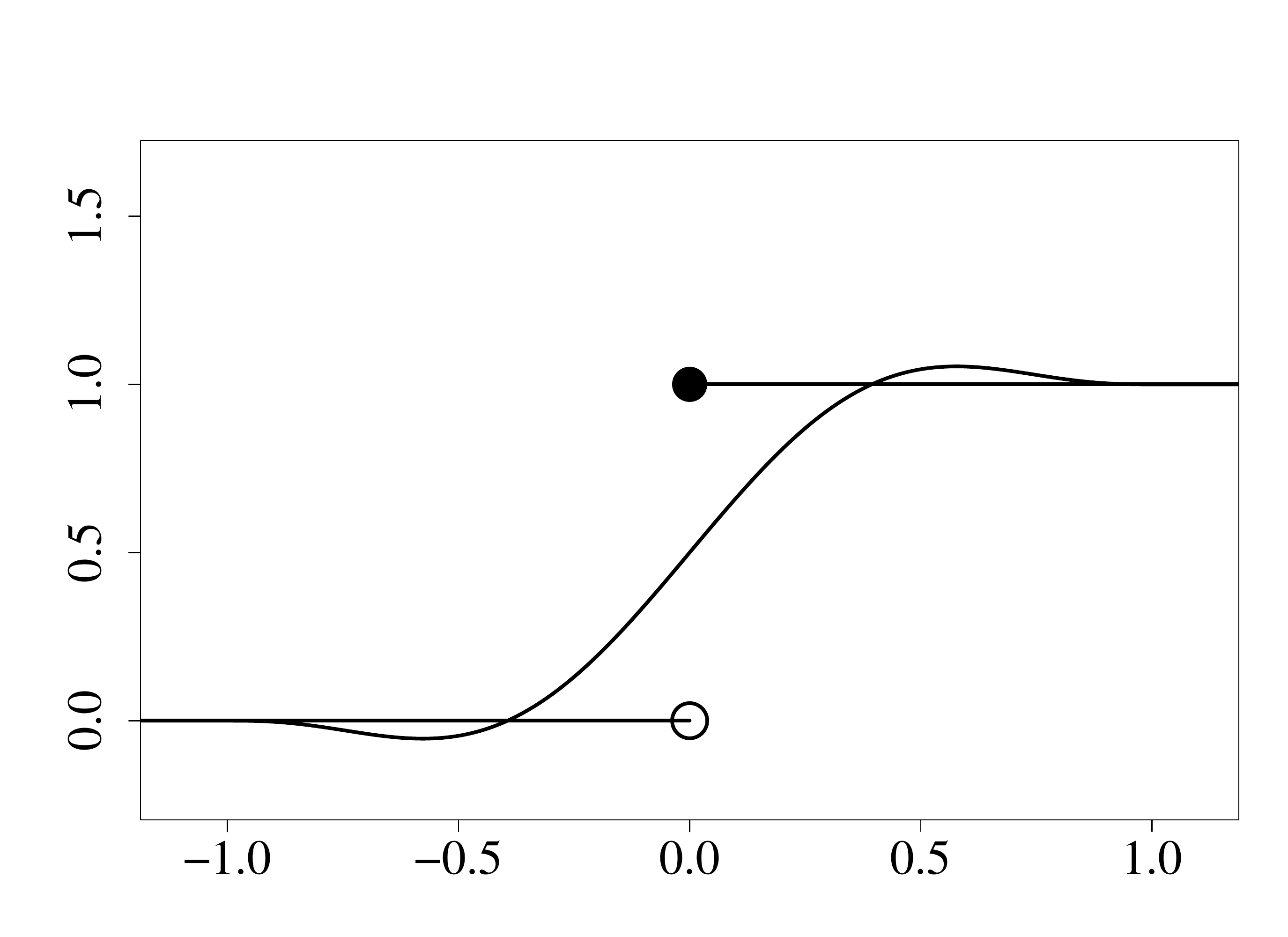}%
\caption{\label{fig:Itilde}$\Ind{u \ge 0}$ and $\tilde{I}(u)=\Ind{-1 \le u \le 1}\bigl[ 0.5+\frac{105}{64}\bigl( u-\frac{5}{3}u^{3}+\frac{7}{5}u^{5}-\frac{3}{7}u^{7}\bigr) \bigr] + \Ind{u>1}$.}
\end{figure}

\subsection{Method of moments (exact identification)}
\label{sec:est-MM}

With exact identification ($d_Z=d_\beta$), our estimator solves the smoothed sample moment conditions\footnote{The right-hand side of \cref{eqn:def-est-MM} can be relaxed to $o_p(n^{-1/2})$.} 
\begin{equation}\label{eqn:def-est-MM}
\hat{\vecf{M}}_n(\hat{\vecf{\beta}}_{\mathrm{MM}},\tau) 
= \vecf{0} .
\end{equation}
Numerically, as long as the bandwidth is not too near zero and $\Lambda(\cdot)$ is differentiable in $\vecf{\beta}$, \cref{eqn:def-est-MM} is easy to solve since the Jacobian exists. 
Further, it is easy to check whether the estimate indeed satisfies \cref{eqn:def-est-MM}. 
In contrast, with over-identification, it is impossible to know if the numerical solution is the global (not just local) minimum of the GMM criterion function.\footnote{See footnote 5 in \citet{ChernozhukovHong03}.} % notes that the smoothing (similar to ours) in the unpublished \citet{MaCurdyTimmins01} ``does not eliminate non-convexities and local optima.''}
Consequently, combining moments and using \cref{eqn:def-est-MM} provides a reliable initial value for the GMM minimization.

\subsection{``One-step'' {GMM} (over-identification)}
\label{sec:est-1s}

With over-identification ($d_Z>d_\beta$), \cref{eqn:def-est-MM} has no solution. 
Thus, a natural GMM estimator is the ``one-step'' estimator proposed in \citet[p.\ 2151]{NeweyMcFadden94} that takes one Newton--Raphson-type step from an initial consistent (but not efficient) estimator. 
\Citet[Thm.\ 3.5, p.\ 2151]{NeweyMcFadden94} show that this is sufficient for asymptotic efficiency, although they assume $g(\cdot)$ is smooth and fixed. 
% In this paper w
We use the one-step estimator only for intermediate computation, focusing on the more common two-step estimator (in \cref{sec:est-GMM}) for asymptotic theory.

Let
\begin{equation} \label{eqn:G-bar}
\bar{\matf{G}}=\frac{1}{n} \sum_{i=1}^{n} \vecf{\nabla}_{\vecf{\beta}\tr} \vecf{g}_{ni}(\bar{\vecf{ \beta}},\tau) , 
\end{equation}
where $\vecf{\nabla}_{\vecf{\beta}\tr}$ denotes the partial derivative with respect to  $\vecf{\beta}\tr$, and 
$\bar{\vecf{\beta}}$ 
is an initial estimator 
consistent for 
$\vecf{\beta}_{0\tau}$. 
With iid data, let
\begin{equation}\label{eqn:Omega-bar}
\bar{\matf{\Omega}}=\frac{1}{n}\sum_{i=1}^{n} \vecf{g}_{ni}(\bar{\vecf{\beta}},\tau) \vecf{g}_{ni}(\bar{\vecf{ \beta}},\tau)\tr 
\end{equation}
be an estimator of 
$\matf{\Omega} 
 = \E[ \vecf{g}_{ni}( \vecf{\beta}_{0\tau}, \tau ) 
       \vecf{g}_{ni}( \vecf{\beta}_{0\tau}, \tau )\tr ]$. 
As in (3.11) of \citet{NeweyMcFadden94}, the one-step estimator is 
\begin{equation}\label{eqn:def-est-1s}
\hat{\vecf{\beta}}_{\mathrm{1s}}
= \bar{\vecf{ \beta}} 
  -(\bar{\matf{G}}\tr \bar{\matf{\Omega}}^{-1}\bar{\matf{G}})^{-1}
  \bar{\matf{G}}\tr \bar{\matf{\Omega}}^{-1}
  \sum_{i=1}^{n}\vecf{g}_{ni}(\bar{\vecf{ \beta}},\tau)/n  
. 
\end{equation}

For $\bar{\vecf{\beta}}$, we use \cref{eqn:def-est-MM}. 
For $\bar{ \matf{\Omega}}$ with dependent data, \cref{eqn:Omega-bar} is replaced by a long-run variance estimator as in \citet{NeweyWest87} and \citet{Andrews91}, which we use in our code. 
However, this lacks formal justification; see \cref{sec:asy-normality}. 
% k BUT THIS IS NOT THEORETICALLY VALID DUE TO ASSUMPTIONS B(iii) AND C(ii) (smoothness) IN ANDREWS91 (OR (2.2), RULING OUT TRIANGULAR ARRAYS); see discussion in \cref{sec:asy-normality}. 

\subsection{Two-step {GMM} estimator (over-identification)}
\label{sec:est-GMM}

We also consider the two-step GMM estimator to achieve asymptotic efficiency in over-identified models ($d_Z>d_\beta$).\footnote{This is not always true with time series under fixed-smoothing asymptotics \citep{HwangSun15}.} 
Let $\hat{\matf{W}}$ be a symmetric, positive definite weighting matrix. 
The smoothed GMM estimator minimizes a weighted quadratic norm of the smoothed sample moment vector: 
% $\sum_{i=1}^{n} \vecf{g}_{ni}(\vecf{\beta}, \tau) $. 
\begin{align}\label{eqn:def-est-GMM}
\hat{\vecf{\beta}}_{\mathrm{GMM}} & =\argmin_{\vecf{\beta} \in \mathcal{B} } \left[ \sum_{i=1}^{n} \vecf{g}_{ni}(\vecf{\beta}, \tau) \right]\tr \hat{\matf{W} } \left[ \sum_{i=1}^{n} \vecf{g}_{ni}(\vecf{\beta}, \tau) \right]  
 =\argmin_{\vecf{\beta} \in \mathcal{B} } \hat{\vecf{M}}_n(\vecf{\beta},\tau)\tr  \hat{\matf{W} }   \hat{\vecf{M}}_n(\vecf{\beta},\tau).
\end{align}
The usual optimal weighting matrix is an estimator of the inverse long-run variance of the sample moments:\footnote{There are other approaches to achieve efficiency without explicitly estimating the long-run variance, like the (Bayesian) exponentially tilted empirical likelihood of \citet[Thm.\ 3]{Schennach07} and \citet[p.\ 36]{Schennach05}, on which \citet{LancasterJun10} is based.} 
$\hat{\matf{W}}^* = \bar{\matf{\Omega}}^{-1} \pconv \matf{\Omega}^{-1}$, 
where $\bar{\matf{\Omega}}$ depends on an initial estimate $\bar{\vecf{\beta}}$ as in \cref{sec:est-1s}. 
% as in \cref{eqn:Omega-bar} for the iid case. 
The resulting efficient two-step GMM estimator is 
\begin{equation}\label{eqn:def-est-2s}
\hat{\vecf{\beta}}_{\mathrm{2s}}
= \argmin_{\vecf{\beta} \in \mathcal{B} } 
    \hat{\vecf{M}}_n(\vecf{\beta},\tau)\tr  \bar{\matf{\Omega}}^{-1}
    \hat{\vecf{M}}_n(\vecf{\beta},\tau)
.
\end{equation}

Computing \cref{eqn:def-est-2s} is difficult because the function may be non-convex. 
To find the global minimum, we use the simulated annealing algorithm from the GenSA package in R \citep{R.GenSA}, which is suited to such problems. %\footnote{We thank Ron Gallant for the suggestion of simulated annealing.} 
Despite its strengths, simulated annealing cannot reliably solve \cref{eqn:def-est-2s} without an initial value reasonably close to the solution. 
Thankfully, such an initial value is provided by \cref{eqn:def-est-MM} or \cref{eqn:def-est-1s}. 
% meh ANY COMMENT ON optim() FOLLOWING XIN'S EXPERIMENTS? 

After computing \cref{eqn:def-est-2s}, one could run simulated annealing again with the \emph{unsmoothed} objective function. 
For linear iid IVQR, \citet{KaplanSun17} suggest that smoothing improves (pointwise in $\tau$) mean squared error but may reduce estimated heterogeneity (across $\tau$), so the benefit of such a final step is ambiguous.

\section{Large sample properties}
\label{sec:asy}

We now establish consistency and asymptotic normality of both the smoothed MM and GMM estimators.

\subsection{Assumptions}
\label{sec:asy-assumptions}

% We first introduce some assumptions. 
Different subsets of the following assumptions are used for different results. 

\begin{assumption}\label{a:XYZ}
For each observation $i$ among $n$ in the sample, endogenous vector $\vecf{Y}_i\in\mathcal{Y}\subseteq\R^{d_Y}$ and instrument vector $\vecf{Z}_i\in\mathcal{Z}\subseteq\R^{d_Z}$; a subset of $\vecf{Z}_i$ is $\vecf{X}_i\in\mathcal{X}\subseteq\R^{d_X}$, with $d_X\le d_Z$. 
The sequence $\{\vecf{Y}_i,\vecf{Z}_i\}$ is strictly stationary and weakly dependent. 
\end{assumption}
\begin{assumption}\label{a:Lambda}
The function $\Lambda \colon \mathcal{Y}\times\mathcal{X}\times\mathcal{B} \mapsto \R$ is known and has (at least) one continuous derivative in its $\mathcal{B}$ argument for all $\vecf{y}\in\mathcal{Y}$ and $\vecf{x}\in\mathcal{X}$. 
\end{assumption}
\begin{assumption}\label{a:B}
The parameter space $\mathcal{B}\in\R^{d_\beta}$ is compact; $d_\beta \le d_Z$. 
Given $\tau\in(0,1)$, the population parameter $\vecf{\beta}_{0\tau}$ is in the interior of $\mathcal{B}$ and uniquely satisfies the moment condition 
\begin{equation}\label{eqn:ID}
\vecf{0} 
= \E\bigl[ \vecf{Z}_i \bigl( \Ind{\Lambda\bigl(\vecf{Y}_i,\vecf{X}_i,\vecf{\beta}_{0\tau}\bigr) \le 0} - \tau \bigr) \bigr] . 
\end{equation}
\end{assumption}
\begin{assumption}\label{a:Z}
The matrix $\E\left(\vecf{Z}_i\vecf{Z}_i\tr \right)$ is positive definite (and finite). 
\end{assumption}
\begin{assumption}\label{a:Itilde}
The function $\tilde{I}(\cdot)$ satisfies $\tilde{I}(u)=0$ for $u\le-1$, $\tilde{I}(u)=1$ for $u\ge1$, and $-1\le\tilde{I}(u)\le2$ for $-1<u<1$. 
The derivative $\tilde{I}'(\cdot)$ is a symmetric, bounded kernel function of order $r\ge2$, so $\int_{-1}^{1}\tilde{I}'(u)\,du=1$, $\int_{-1}^{1}u^k\tilde{I}'(u)\,du=0$ for $k=1,\ldots,r-1$, and $\int_{-1}^{1}\lvert u^r\tilde{I}'(u) \rvert \,du<\infty$ but $\int_{-1}^{1}u^r\tilde{I}'(u)\,du\ne0$, and $\int_{-1}^{1} \lvert u^{r+1} \tilde{I}'(u) \rvert \,du<\infty$. 
\end{assumption}
\begin{assumption}\label{a:h}
The bandwidth sequence $h_n$ satisfies $h_n=o(n^{-1/(2r)})$. 
\end{assumption}
\begin{assumption}\label{a:Y}
Given any $\vecf{\beta}\in\mathcal{B}$ and almost all $\vecf{Z}_i=\vecf{z}$ (i.e.,\ up to a set of zero probability), 
the conditional distribution of $\Lambda(\vecf{Y}_i,\vecf{X}_i,\vecf{\beta})$ given $\vecf{Z}_i=\vecf{z}$ is continuous in a neighborhood of zero. 
\end{assumption}
\begin{assumption}\label{a:ULLN}
For a fixed $\tau\in(0,1)$, using the definition in \cref{eqn:def-M-hat}, 
\begin{equation}\label{eqn:smooth-Mn-ULLN}
\sup_{\vecf{\beta}\in\mathcal{B}}
\norm{ \hat{\vecf{M}}_n(\vecf{\beta},\tau) - \E[ \hat{\vecf{M}}_n(\vecf{\beta},\tau) ] }
= o_p(1) . 
\end{equation}
\end{assumption}
\begin{assumption}\label{a:U}
Let $\Lambda_i\equiv \Lambda\bigl(\vecf{Y}_i,\vecf{X}_i,\vecf{\beta}_{0\tau}\bigr)$ and $\vecf{D}_i\equiv \nabla_{\vecf{\beta}}\Lambda\bigl(\vecf{Y}_i,\vecf{X}_i,\vecf{\beta}_{0\tau}\bigr)$, 
using the notation
\begin{equation}\label{eqn:def-nabla-b-Lambda}
\vecf{\nabla}_{\vecf{\beta}} \Lambda\left(\vecf{y},\vecf{x},\vecf{\beta}_0\right)
\equiv \pD{}{\vecf{\beta}} \Lambda\left(\vecf{y},\vecf{x},\vecf{\beta}\right) \Bigr\rvert_{\vecf{\beta}=\vecf{\beta}_0} , 
\end{equation}
for the $d_\beta\times1$ partial derivative vector. 
Let $f_{\Lambda|\vecf{Z}}(\cdot\mid\vecf{z})$ denote the conditional PDF of $\Lambda_i$ given $\vecf{Z}_i=\vecf{z}$, and let $f_{\Lambda|\vecf{Z},\vecf{D}}(\cdot\mid\vecf{z},\vecf{d})$ denote the conditional PDF of $\Lambda_i$ given $\vecf{Z}_i=\vecf{z}$ and $\vecf{D}_i=\vecf{d}$. 
(i) For almost all $\vecf{z}$ and $\vecf{d}$, $f_{\Lambda|\vecf{Z}}(\cdot\mid\vecf{z})$ and $f_{\Lambda|\vecf{Z},\vecf{D}}(\cdot\mid\vecf{z},\vecf{d})$ are at least $r$ times continuously differentiable in a neighborhood of zero, where the value of $r$ is from \cref{a:Itilde}. 
For almost all $\vecf{z}\in\mathcal{Z}$ and $u$ in a neighborhood of zero, there exists a dominating function $C(\cdot)$ such that $\abs{f_{\Lambda|\vecf{Z}}^{(r)}(u\mid\vecf{z})}\le C(\vecf{z})$ and $\E\bigl[C(\vecf{Z})\abs{\vecf{Z}}\bigr] < \infty$. 
(ii) The matrix
\begin{equation}\label{eqn:def-G}
\matf{G} 
\equiv 
\left. \pD{}{\vecf{\beta}\tr } \E\bigl[ \vecf{Z}_i \Ind{ \Lambda(\vecf{Y}_i,\vecf{X}_i,\vecf{\beta}) \le 0 } \bigr] \right\rvert_{\vecf{\beta}=\vecf{\beta}_{0\tau}}
= -\E\left\{ \vecf{Z}_i \vecf{D}_i\tr  
          f_{\Lambda|\vecf{Z},\vecf{D}}(0\mid\vecf{Z}_i,\vecf{D}_i)
\right\} 
\end{equation}
% EXACT ID: is nonsingular. 
has rank $d_\beta$. 
\end{assumption}
\begin{assumption}\label{a:CLT}
A pointwise CLT applies: 
\begin{equation}\label{eqn:CLT}
\sqrt{n} \bigl\{ \hat{\vecf{M}}_n(\vecf{\beta}_{0\tau},\tau) - \E[\hat{\vecf{M}}_n(\vecf{\beta}_{0\tau},\tau)] \bigr\}
\dconv  \Normalp{\vecf{0}}{\matf{\Sigma}_{\tau}} . 
\end{equation}
\end{assumption}
\begin{assumption}\label{a:G-est}
Let $Z_i^{(k)}$ denote the $k$th element of $\vecf{Z}_i$, and similarly $\beta^{(k)}$. 
Let $G_{kj}$ denote the row $k$, column $j$ element of $\matf{G}$ (from \cref{a:U}). 
Assume 
\begin{equation}\label{eqn:G-est}
-\frac{1}{nh_n} \sum_{i=1}^{n} 
 \tilde{I}'\bigl(-\Lambda(\vecf{Y}_i,\vecf{X}_i,\tilde{\vecf{\beta}}_{k})/h_n \bigr)
 Z_i^{(k)} 
 \pD{}{\beta^{(j)}}\Lambda(\vecf{Y}_i,\vecf{X}_i,\vecf{\beta}) 
 \Bigr\rvert_{\vecf{\beta}=\tilde{\vecf{\beta}}_{\tau,k}} 
\pconv  G_{kj} 
 . 
\end{equation}
for each $k=1,\ldots,d_\beta$ and $j=1,\ldots,d_\beta$, where each $\tilde{\vecf{\beta}}_{k}$ lies between $\vecf{\beta}_{0\tau}$ and $\hat{\vecf{\beta}}_{\mathrm{MM}}$ (defined in \cref{a:B} and \cref{eqn:def-est-MM}, respectively). 
\end{assumption}

\begin{assumption}\label{a:W}
For the weighting matrix, $\hat{\matf{W}} \pconv \matf{W}$, and both are symmetric, positive definite matrices. 
\end{assumption}

For transparency, \cref{a:XYZ} includes sampling assumptions that help establish the high-level assumptions \cref{a:ULLN,a:CLT,a:G-est}, which may require additional restrictions on dependence (mixing conditions); see \cref{sec:app-primitive}. 
\Cref{a:Lambda} is stronger than a nonparametric model but more general than a linear-in-parameters model. 
% of \citet[eqn.\ (3.4)]{ChernozhukovHansen06} and \citet[Assumption 1]{KaplanSun17}. 
% 
\Cref{a:B} assumes global identification, following the GMM tradition going back to \citet[Thm.\ 2.1(iii), p.\ 1035]{Hansen82} due to ``the difficulty of specifying primitive identification conditions for GMM'' \citep[p.\ 2120]{NeweyMcFadden94}, although \citet{ChernozhukovHansen05} have some such results for IVQR. 
\Cref{a:Z} matches Assumption 2(ii) in \citet{KaplanSun17}; it is relatively weak, imposing only a finite second moment on $\vecf{Z}_i$ (and, unlike 2SLS, no moment restrictions on $\vecf{Y}_i$. 
\Cref{a:Itilde} is essentially Assumption 4(i,ii) of \citet{KaplanSun17}. 

\Cref{a:h} ensures that the asymptotic effect of smoothing is negligible; it is relatively weak given $r=4$ (as in \cref{fig:Itilde}), and given the optimal $n^{-1/(2r-1)}$ rate for $h_n$ from \citet{KaplanSun17} for linear IVQR with iid data. 
With weakly dependent data, other bandwidth restrictions are needed to establish \cref{a:G-est}; see \cref{sec:app-primitive-G-est} for details. 
\Cref{sec:app-comp-h} contains suggestions for practical bandwidth selection.

\Cref{a:Y} can be checked easily in most cases. 
For example, if $\Lambda(\vecf{Y}_i,\vecf{X}_i,\vecf{\beta})=\tilde{Y}_i-(D_i,\vecf{X}_i\tr )\vecf{\beta}$ and $\vecf{Z}\tr =(\vecf{X}\tr ,\tilde{Z})$, then \cref{a:Y} is satisfied if the outcome $Y$ has a continuous distribution conditional on almost all $(\vecf{X}\tr ,\tilde{Z})=(\vecf{x}\tr ,\tilde{z})$. 
\Cref{a:ULLN} generally requires some restriction on dependence and moments, but it is much weaker than the iid sampling assumption of \citet[Assumption 2.R1]{ChernozhukovHansen06} or \citet[Assumption 1]{KaplanSun17}. 
\Cref{a:U} is used for the asymptotic normality result; it generalizes parts of Assumptions 3 and 7 in \citet{KaplanSun17} to our nonlinear model. 
The full rank of $\matf{G}$ is also sufficient for local (but not global) identification \citep[e.g.,][p.\ 787]{ChenChernozhukovLeeNewey14}. 
The CLT in \cref{a:CLT} is a high-level assumption, similar to 
condition (iv) in Theorem 7.2 of \citet{NeweyMcFadden94}, for example; like \cref{a:ULLN}, it requires some restriction on dependence and moments but does not require iid sampling. 
Examples of more primitive sufficient conditions for \cref{a:ULLN} and \cref{a:CLT} are given later in 
\cref{sec:app-primitive}. 
\Cref{a:G-est} is actually a generalization of the consistency of Powell's estimator for the asymptotic covariance matrix of the usual quantile regression estimator, as detailed in \cref{sec:asy-normality}. 
\Cref{a:G-est} embodies the stochastic equicontinuity that is often separately assumed, as in Theorem 7.2(v) in \citet{NeweyMcFadden94}; it also involves interrelated restrictions of dependence, moments, and the bandwidth rate, as described in \cref{sec:app-primitive-G-est}. %\footnote{Thanks to Carlos Martins-Filho for emphasizing this point.}

\Cref{a:W} is standard for GMM. 
For two-step GMM, it is satisfied given a consistent estimator of the (inverse) asymptotic covariance matrix; see discussion in \cref{sec:asy-normality}. 
% k ...BUT THIS DOESN'T EXIST YET; SEE END OF \cref{sec:asy-normality}

Finally, note the lack of a conditional quantile restriction. 
Only the unconditional moments in \cref{a:B} are assumed to be satisfied by the (pseudo) true parameter. 
Thus, all our results hold even under misspecification (of a conditional model).

\subsection{Consistency}
\label{sec:asy-consistency}

To establish consistency, we use Theorem 5.9 in \citet{vanderVaart98}, showing the two required conditions are satisfied here. 
One condition is an identification condition. 
The other requires uniform (in $\vecf{\beta}\in\mathcal{B}$) convergence in probability of the sample maps $\hat{\vecf{M}}_n(\vecf{\beta},\tau)$ to the population map $\vecf{M}(\vecf{\beta},\tau)$. 
No iid sampling assumption is required; the second assumption may be established under weak dependence. 

A detailed example of primitive conditions for the high-level uniform weak law of large numbers assumed in \Cref{a:ULLN} is given in \cref{sec:app-primitive-ULLN}. 

In addition to \cref{a:ULLN}, we must show that the sequence of (non-random) maps $\E[ \hat{\vecf{M}}_n(\vecf{\beta},\tau) ]$ converges to the desired population map $\vecf{M}(\vecf{\beta},\tau)$, as in \cref{lem:smooth-EMn-ULLN}. 

\begin{lemma}\label{lem:smooth-EMn-ULLN}
Under \Cref{a:XYZ,a:Lambda,a:B,a:Z,a:Y,a:Itilde,a:h}, 
for a fixed $\tau$, %\in\mathcal{T}$, 
using definitions in \cref{eqn:def-M,eqn:def-M-hat}, 
\begin{equation}\label{eqn:smooth-EMn-ULLN}
\sup_{\vecf{\beta}\in\mathcal{B}}
\absbig{ \E[ \hat{\vecf{M}}_n(\vecf{\beta},\tau) ] 
      - \vecf{M}(\vecf{\beta},\tau) }
= o(1) . 
\end{equation}
\end{lemma}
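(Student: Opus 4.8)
The plan is to reduce the supremum to a statement about the shrinking neighborhood $\{\abs{\Lambda}<h_n\}$ and then to upgrade pointwise control to uniform control by a Dini-type argument. First I would use strict stationarity (\Cref{a:XYZ}) to collapse the average: since $(\vecf Y_i,\vecf X_i,\vecf Z_i)$ is identically distributed across $i$, the non-random map $\E[\hat{\vecf M}_n(\vecf\beta,\tau)]$ is just the single-observation expectation, and the common $-\tau$ term cancels in the difference, leaving
\begin{equation*}
\E[\hat{\vecf M}_n(\vecf\beta,\tau)]-\vecf M(\vecf\beta,\tau)
= \E\!\left[\vecf Z_i\bigl(\tilde I(-\Lambda(\vecf Y_i,\vecf X_i,\vecf\beta)/h_n)-\Ind{\Lambda(\vecf Y_i,\vecf X_i,\vecf\beta)\le 0}\bigr)\right].
\end{equation*}
Because $\tilde I(u)=\Ind{u\ge 0}$ for $\abs u\ge 1$ (\Cref{a:Itilde}), the integrand vanishes outside $\{\abs{\Lambda(\vecf Y_i,\vecf X_i,\vecf\beta)}<h_n\}$, and on that set the bracket is bounded by a constant. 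Hence, writing $\Lambda_{\vecf\beta}\equiv\Lambda(\vecf Y_i,\vecf X_i,\vecf\beta)$,
\begin{equation*}
\norm{\E[\hat{\vecf M}_n(\vecf\beta,\tau)]-\vecf M(\vecf\beta,\tau)}
\le C\,\E\bigl[\norm{\vecf Z_i}\,\Ind{\abs{\Lambda_{\vecf\beta}}\le h_n}\bigr]
\equiv C\,\tilde H_{h_n}(\vecf\beta),
\end{equation*}
where $\E\norm{\vecf Z_i}<\infty$ follows from \Cref{a:Z}.

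Second, I would study the family $\tilde H_\epsilon(\vecf\beta)\equiv\E[\norm{\vecf Z_i}\Ind{\abs{\Lambda_{\vecf\beta}}\le\epsilon}]$ on the compact set $\mathcal B$ (\Cref{a:B}). Pointwise in $\vecf\beta$, dominated convergence gives $\tilde H_\epsilon(\vecf\beta)\downarrow\E[\norm{\vecf Z_i}\Ind{\Lambda_{\vecf\beta}=0}]$ as $\epsilon\downarrow 0$, and this limit is zero because \Cref{a:Y} rules out an atom of $\Lambda_{\vecf\beta}$ at zero conditional on almost every $\vecf Z_i=\vecf z$; so $\tilde H_\epsilon\downarrow 0$ pointwise. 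The key structural fact is that each $\tilde H_\epsilon$ is upper semicontinuous in $\vecf\beta$: since $\Lambda$ is continuous in $\vecf\beta$ (\Cref{a:Lambda}) and $\{u:\abs u\le\epsilon\}$ is closed, $\limsup_{\vecf\beta'\to\vecf\beta}\Ind{\abs{\Lambda_{\vecf\beta'}}\le\epsilon}\le\Ind{\abs{\Lambda_{\vecf\beta}}\le\epsilon}$ pointwise in $(\vecf Y_i,\vecf X_i)$, and reverse Fatou (dominated by the integrable $\norm{\vecf Z_i}$) transfers this to $\tilde H_\epsilon$. Using the closed-set indicator here, rather than the open-set one, is what delivers upper semicontinuity with no further smoothness of the conditional law.

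Third, I would invoke the semicontinuous generalization of Dini's theorem: a decreasing sequence of upper semicontinuous functions converging pointwise to a continuous limit on a compact set converges uniformly. Applied to $\tilde H_{\epsilon_m}$ along any $\epsilon_m\downarrow 0$, this yields $\sup_{\vecf\beta\in\mathcal B}\tilde H_\epsilon(\vecf\beta)\to 0$. Finally, since $h_n\to 0$ by \Cref{a:h}, for any $\epsilon>0$ we have $h_n\le\epsilon$ eventually, whence $\sup_{\vecf\beta}\tilde H_{h_n}(\vecf\beta)\le\sup_{\vecf\beta}\tilde H_\epsilon(\vecf\beta)$; letting $\epsilon\downarrow 0$ gives $\sup_{\vecf\beta}\tilde H_{h_n}(\vecf\beta)\to 0$, and the displayed bound completes the proof.

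The main obstacle is the uniformity in $\vecf\beta$. The pointwise statement is an immediate dominated-convergence consequence of \Cref{a:Y}, but because the $\vecf\beta$-derivative of $\E[\hat{\vecf M}_n]$ carries a $1/h_n$ factor, the sequence $\{\E[\hat{\vecf M}_n(\cdot,\tau)]\}$ is not equicontinuous, and a naive finite-cover argument fails (all the more since $\Lambda$ need not be Lipschitz in $\vecf\beta$ uniformly over an unbounded $(\vecf y,\vecf x)$). Routing the uniformity through the monotone family $\tilde H_\epsilon$ and the semicontinuous Dini theorem is what sidesteps this; the one point requiring care is securing upper semicontinuity of $\tilde H_\epsilon$ without assuming a conditional density, which the closed-set formulation handles. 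Note that this soft argument delivers only $o(1)$, not the sharper $O(h_n^r)$ rate one would obtain by Taylor expanding a smooth conditional density as in \Cref{a:U}, which is precisely why \Cref{a:U} is not needed here.
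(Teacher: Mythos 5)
Your proof is correct, and it reaches the same three essential ingredients as the paper's argument --- the difference $\E[\hat{\vecf{M}}_n(\vecf{\beta},\tau)]-\vecf{M}(\vecf{\beta},\tau)$ is supported on the event $\{\abs{\Lambda(\vecf{Y},\vecf{X},\vecf{\beta})}\le h_n\}$, it is dominated by a multiple of $\norm{\vecf{Z}}$ which is integrable by \Cref{a:Z}, and \Cref{a:Y} kills the pointwise limit because $\Lambda(\vecf{Y},\vecf{X},\vecf{\beta})$ has no atom at zero --- but you route the uniformity over $\mathcal{B}$ through a genuinely different mechanism. The paper replaces the supremum by a maximum attained at some $\vecf{\beta}^*_n$ (Weierstrass on the compact $\mathcal{B}$) and then passes $\lim_{h_n\to0}$ inside the expectation along that moving sequence via dominated convergence; making that step airtight requires extracting a convergent subsequence of $\vecf{\beta}^*_n$ and using continuity of $\Lambda$ in $\vecf{\beta}$ to argue the integrand vanishes almost surely in the limit, which the paper leaves implicit. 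You instead majorize by the monotone family $\tilde H_\epsilon(\vecf{\beta})=\E[\norm{\vecf{Z}_i}\Ind{\abs{\Lambda_{\vecf{\beta}}}\le\epsilon}]$, verify upper semicontinuity via the closed-set indicator and reverse Fatou, and invoke the semicontinuous Dini theorem on the compact $\mathcal{B}$. What your version buys is an explicit, checkable treatment of the uniformity (the open sets $\{\tilde H_{\epsilon_m}<\delta\}$ give a finite subcover directly), and your observation that the closed-set formulation of the indicator is what secures upper semicontinuity without any smoothness of the conditional law is exactly the right point of care; what the paper's version buys is brevity, since the maximizer device collapses the supremum in one line. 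Your closing remark is also accurate: both arguments are ``soft'' and deliver only $o(1)$, whereas the $O(h_n^r)$ rate used in \cref{lem:Mn0-normality} requires the Taylor expansion of the conditional density under \Cref{a:U}, which is rightly not assumed here.
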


\Cref{lem:smooth-EMn-ULLN} is intuitive. 
Without smoothing, $\vecf{M}(\cdot)=\E[\hat{\vecf{M}}^u_n(\cdot)]$ for all $n$. 
With smoothing, we should expect this to hold asymptotically if the smoothing is asymptotically negligible. 
The next result establishes consistency.

\begin{theorem}\label{thm:consistency}
Under \Cref{a:XYZ,a:Lambda,a:B,a:Z,a:Y,a:Itilde,a:h,a:ULLN} for smoothed MM, and additionally \cref{a:W} for smoothed GMM, the estimators from \cref{eqn:def-est-MM,eqn:def-est-GMM} are consistent: 
\begin{equation}\label{eqn:consistency}
\hat{\vecf{\beta}}_{\mathrm{MM}} - \vecf{\beta}_{0\tau} = o_p(1) , \quad
\hat{\vecf{\beta}}_{\mathrm{GMM}} - \vecf{\beta}_{0\tau} = o_p(1) . 
\end{equation}
\end{theorem}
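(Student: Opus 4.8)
The plan is to reduce everything to the classical consistency theorems for Z-estimators and extremum estimators, with a single uniform convergence statement serving as the common backbone. First I would establish that
\[
\sup_{\vecf{\beta}\in\mathcal{B}} \norm{ \hat{\vecf{M}}_n(\vecf{\beta},\tau) - \vecf{M}(\vecf{\beta},\tau) } = o_p(1),
\]
by combining the high-level ULLN in \cref{a:ULLN}, which controls the stochastic deviation $\hat{\vecf{M}}_n - \E[\hat{\vecf{M}}_n]$, with \cref{lem:smooth-EMn-ULLN}, which controls the deterministic smoothing bias $\E[\hat{\vecf{M}}_n] - \vecf{M}$, via the triangle inequality. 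Alongside this I would record two structural facts about the population map: $\vecf{M}(\cdot,\tau)$ is continuous on $\mathcal{B}$, hence bounded there since $\mathcal{B}$ is compact by \cref{a:B}.

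Second, for the smoothed MM estimator of \cref{eqn:def-est-MM}, which solves $\hat{\vecf{M}}_n(\hat{\vecf{\beta}}_{\mathrm{MM}},\tau)=\vecf{0}$, I would apply Theorem 5.9 of \citet{vanderVaart98} with $\Psi_n=\hat{\vecf{M}}_n$ and $\Psi=\vecf{M}$. Its first hypothesis is precisely the uniform convergence just established (and its tolerance $o_p(1)$ comfortably accommodates the relaxed right-hand side $o_p(n^{-1/2})$). Its second hypothesis is the well-separated-zero condition: for each $\epsilon>0$, $\inf_{\norm{\vecf{\beta}-\vecf{\beta}_{0\tau}}\ge\epsilon}\norm{\vecf{M}(\vecf{\beta},\tau)}>0=\norm{\vecf{M}(\vecf{\beta}_{0\tau},\tau)}$. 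This follows because $\vecf{\beta}_{0\tau}$ is the unique zero of $\vecf{M}(\cdot,\tau)$ by \cref{a:B}, the set $\{\vecf{\beta}\in\mathcal{B}:\norm{\vecf{\beta}-\vecf{\beta}_{0\tau}}\ge\epsilon\}$ is compact, and $\norm{\vecf{M}(\cdot,\tau)}$ is continuous, so it attains a strictly positive infimum there. Consistency of $\hat{\vecf{\beta}}_{\mathrm{MM}}$ then follows directly.

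Third, for the smoothed GMM estimator of \cref{eqn:def-est-GMM}, which minimizes $Q_n(\vecf{\beta})=\hat{\vecf{M}}_n(\vecf{\beta},\tau)\tr\hat{\matf{W}}\hat{\vecf{M}}_n(\vecf{\beta},\tau)$, I would instead invoke a standard extremum-estimator consistency result (e.g., Theorem 2.1 of \citet{NeweyMcFadden94}). The limiting criterion $Q_0(\vecf{\beta})=\vecf{M}(\vecf{\beta},\tau)\tr\matf{W}\,\vecf{M}(\vecf{\beta},\tau)$ is uniquely minimized at $\vecf{\beta}_{0\tau}$, since $\vecf{M}(\vecf{\beta}_{0\tau},\tau)=\vecf{0}$ while $\matf{W}$ is positive definite and $\vecf{M}(\vecf{\beta},\tau)\ne\vecf{0}$ elsewhere (\cref{a:B,a:W}). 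Uniform convergence $\sup_{\vecf{\beta}}\abs{Q_n(\vecf{\beta})-Q_0(\vecf{\beta})}=o_p(1)$ follows from $\hat{\matf{W}}\pconv\matf{W}$ (\cref{a:W}), the uniform convergence of $\hat{\vecf{M}}_n$, and the boundedness of $\vecf{M}$ on $\mathcal{B}$, after adding and subtracting to split the quadratic-form difference into a weighting-matrix term and two moment-vector terms. Together these deliver consistency of $\hat{\vecf{\beta}}_{\mathrm{GMM}}$.

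I expect the main obstacle to be establishing the continuity of $\vecf{M}(\cdot,\tau)$, which is what upgrades the pointwise uniqueness of \cref{a:B} into the well-separation and minimization arguments above. The difficulty is the discontinuity of the indicator inside $\vecf{g}^u_i$: along a sequence $\vecf{\beta}_k\to\vecf{\beta}$, the integrand $\Ind{\Lambda(\vecf{Y}_i,\vecf{X}_i,\vecf{\beta}_k)\le0}$ need not converge at points where $\Lambda(\vecf{Y}_i,\vecf{X}_i,\vecf{\beta})=0$. I would handle this by using \cref{a:Lambda} (continuity of $\Lambda$ in $\vecf{\beta}$) to obtain $\Lambda(\vecf{Y}_i,\vecf{X}_i,\vecf{\beta}_k)\to\Lambda(\vecf{Y}_i,\vecf{X}_i,\vecf{\beta})$, and \cref{a:Y} (continuity of the conditional law of $\Lambda$ near zero, which forces $\Pr[\Lambda=0]=0$) to conclude that the indicator converges almost surely, then passing the limit through the expectation by dominated convergence with envelope $\norm{\vecf{Z}_i}$, which is integrable by \cref{a:Z}. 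Once this continuity, the ULLN, and \cref{lem:smooth-EMn-ULLN} are in hand, the remaining verifications are routine.
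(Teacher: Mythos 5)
Your proposal is correct and follows essentially the same route as the paper: the same triangle-inequality decomposition of the uniform convergence into the ULLN of \cref{a:ULLN} plus the smoothing-bias Lemma~\ref{lem:smooth-EMn-ULLN}, the same application of Theorem 5.9 of \citet{vanderVaart98} with the well-separation argument for MM, the same add-and-subtract expansion of the quadratic form for GMM, and the same treatment of the continuity of $\vecf{M}(\cdot,\tau)$ via \cref{a:Lambda,a:Y} and dominated convergence. The only cosmetic difference is that you cite Theorem 2.1 of \citet{NeweyMcFadden94} for the GMM step where the paper uses Theorem 5.7 of \citet{vanderVaart98}; under compactness and continuity these are interchangeable, and the paper itself notes the Newey--McFadden alternative.
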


\subsection{Asymptotic normality}
\label{sec:asy-normality}

To establish asymptotic normality, smoothing facilitates the usual approach of expanding the sample moments around $\vecf{\beta}_{0\tau}$ because the smoothed sample moments are differentiable. 
That is, we may take a mean value expansion of the first-order condition, rearrange, and take limits.

The following lemma aids the proof of \cref{thm:normality}. 
It relies on \cref{a:CLT} and a proof that the asymptotic ``bias'' is negligible, i.e., 
$\sqrt{n}\E[\hat{\vecf{M}}_n(\vecf{\beta}_{0\tau},\tau)] \to 0$. 

\begin{lemma}\label{lem:Mn0-normality}
Under \Cref{a:XYZ,a:CLT,a:Itilde,a:Z,a:B,a:U,a:h,a:Lambda}, 
\begin{equation}\label{eqn:Mn0-normality}
\sqrt{n}\hat{\vecf{M}}_n(\vecf{\beta}_{0\tau},\tau)
\dconv 
\Normalp{\vecf{0}}{\matf{\Sigma}_{\tau}} , \quad 
\matf{\Sigma}_{\tau} 
= \lim_{n\to\infty} \Varp{ n^{-1/2}\sum_{i=1}^{n} \vecf{g}_{ni}(\vecf{\beta}_{0\tau},\tau) } . 
\end{equation}
With iid data and the conditional quantile restriction $\Pr\bigl( \Lambda(\vecf{Y}_i,\vecf{X}_i,\vecf{\beta}_{0\tau})\le0 \mid \vecf{Z}_i\bigr)=\tau$, then 
$\matf{\Sigma}_{\tau} = \tau(1-\tau) \E(\vecf{Z}_i\vecf{Z}_i\tr)$. 
\end{lemma}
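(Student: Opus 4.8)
The plan is to decompose $\sqrt{n}\,\hat{\vecf{M}}_n(\vecf{\beta}_{0\tau},\tau)$ into a mean-zero (centered) term plus a deterministic smoothing bias, dispatch the first by \Cref{a:CLT}, and show the second is asymptotically negligible. First I would write
\[
\sqrt{n}\,\hat{\vecf{M}}_n(\vecf{\beta}_{0\tau},\tau)
= \sqrt{n}\bigl\{ \hat{\vecf{M}}_n(\vecf{\beta}_{0\tau},\tau) - \E[\hat{\vecf{M}}_n(\vecf{\beta}_{0\tau},\tau)] \bigr\}
 + \sqrt{n}\,\E[\hat{\vecf{M}}_n(\vecf{\beta}_{0\tau},\tau)] .
\]
By \Cref{a:CLT} the centered term converges in distribution to $\Normalp{\vecf{0}}{\matf{\Sigma}_{\tau}}$; since centering leaves a variance unchanged, the CLT's limiting variance is exactly $\matf{\Sigma}_{\tau}=\lim_{n\to\infty}\Varp{ n^{-1/2}\sum_{i=1}^{n} \vecf{g}_{ni}(\vecf{\beta}_{0\tau},\tau) }$, matching the lemma. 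Once the bias term is shown to be $o(1)$, Slutsky's theorem yields the stated convergence.

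The main obstacle is proving $\sqrt{n}\,\E[\hat{\vecf{M}}_n(\vecf{\beta}_{0\tau},\tau)]\to\vecf{0}$. Using stationarity (\Cref{a:XYZ}) and the exact unsmoothed moment condition $\E\{\vecf{Z}_i(\Ind{\Lambda_i\le0}-\tau)\}=\vecf{0}$ from \Cref{a:B}, the bias reduces to the $\vecf{Z}$-weighted expected smoothing error
\[
\E[\hat{\vecf{M}}_n(\vecf{\beta}_{0\tau},\tau)]
= \E\bigl\{ \vecf{Z}_i\bigl[ \tilde{I}(-\Lambda_i/h_n) - \Ind{\Lambda_i\le0} \bigr] \bigr\} ,
\]
with $\Lambda_i\equiv\Lambda(\vecf{Y}_i,\vecf{X}_i,\vecf{\beta}_{0\tau})$ as in \Cref{a:U}. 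Conditioning on $\vecf{Z}_i=\vecf{z}$, substituting $u=-\lambda/h_n$, and using that $\psi(u)\equiv\tilde{I}(u)-\Ind{u\ge0}$ is supported on $[-1,1]$, the conditional bias becomes $h_n\int_{-1}^{1}\psi(u)\,f_{\Lambda|\vecf{Z}}(-h_n u\mid\vecf{z})\,du$. Integration by parts converts moments of $\psi$ into those of the kernel, giving $\int_{-1}^{1}\psi(u)u^{k}\,du=-\tfrac{1}{k+1}\int_{-1}^{1}u^{k+1}\tilde{I}'(u)\,du$, which by the order-$r$ property in \Cref{a:Itilde} vanishes for $k=0,\ldots,r-2$ and is first nonzero at $k=r-1$. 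Taylor-expanding $f_{\Lambda|\vecf{Z}}(\cdot\mid\vecf{z})$ about the origin—legitimate by the $r$-fold differentiability in \Cref{a:U}(i)—and discarding the vanishing-moment terms leaves a conditional bias of order $h_n^{r}$, whose $\vecf{Z}$-weighted expectation is controlled by the dominating function $C(\cdot)$ with $\E[C(\vecf{Z})\abs{\vecf{Z}}]<\infty$ from \Cref{a:U}(i) together with finite $\E(\vecf{Z}_i\vecf{Z}_i\tr)$ (\Cref{a:Z}). Hence the bias is $O(h_n^{r})$, and because \Cref{a:h} imposes $h_n=o(n^{-1/(2r)})$, i.e.\ $\sqrt{n}\,h_n^{r}\to0$, it is $o(n^{-1/2})$; this density-expansion and kernel-moment bookkeeping is the delicate step.

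For the iid specialization I would first use independence to replace the limiting variance of the normalized sum by the per-observation variance, $\matf{\Sigma}_{\tau}=\lim_{n\to\infty}\Varp{\vecf{g}_{n1}(\vecf{\beta}_{0\tau},\tau)}$. As the moment is mean-zero, this equals $\lim_{n\to\infty}\E[\vecf{Z}_1\vecf{Z}_1\tr(\tilde{I}(-\Lambda_1/h_n)-\tau)^2]$. Since $h_n\to0$ gives $\tilde{I}(-\Lambda_1/h_n)\to\Ind{\Lambda_1\le0}$ almost surely (the continuity in \Cref{a:Y} ensures $\Lambda_1\ne0$ a.s.), and the integrand is dominated by a constant multiple of $\abs{\vecf{Z}_1\vecf{Z}_1\tr}$ (as $\tilde{I}\in[-1,2]$ by \Cref{a:Itilde}), which is integrable by \Cref{a:Z}, dominated convergence gives $\matf{\Sigma}_{\tau}=\E[\vecf{Z}_1\vecf{Z}_1\tr(\Ind{\Lambda_1\le0}-\tau)^2]$. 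Finally, conditioning on $\vecf{Z}_1$ and invoking the conditional quantile restriction $\Pr(\Lambda_1\le0\mid\vecf{Z}_1)=\tau$ gives $\E[(\Ind{\Lambda_1\le0}-\tau)^2\mid\vecf{Z}_1]=\tau(1-\tau)$, so that $\matf{\Sigma}_{\tau}=\tau(1-\tau)\,\E(\vecf{Z}_1\vecf{Z}_1\tr)$, completing the lemma.
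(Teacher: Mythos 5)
Your proposal is correct and follows essentially the same route as the paper: the same centering decomposition, \Cref{a:CLT} for the leading term, and a Taylor-expansion/vanishing-kernel-moment argument showing the smoothing bias is $O(h_n^r)=o(n^{-1/2})$ under \Cref{a:h}; the paper merely organizes the bias calculation by integrating the conditional CDF by parts before changing variables, whereas you change variables on the density and push the kernel-moment conditions through $\psi(u)=\tilde I(u)-\Ind{u\ge0}$, which is the same computation. The only other difference is that you derive the iid variance $\tau(1-\tau)\E(\vecf{Z}_i\vecf{Z}_i\tr)$ directly by dominated convergence where the paper cites Theorem 1 of \citet{KaplanSun17}; note only that your appeal to \Cref{a:Y} there is not among the lemma's hypotheses, but the continuity of $f_{\Lambda|\vecf{Z}}$ near zero from \Cref{a:U}(i) gives $\Pr(\Lambda_1=0)=0$ just as well.
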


The asymptotic normality of our estimators can now be stated. 
We also show their asymptotically linear (influence function) representations.

\begin{theorem}\label{thm:normality}
Under \Cref{a:XYZ,a:Lambda,a:B,a:Z,a:Y,a:Itilde,a:h,a:ULLN,a:U,a:G-est,a:CLT} for smoothed MM, and additionally \cref{a:W} for smoothed GMM, 
for the estimators from \cref{eqn:def-est-MM,eqn:def-est-GMM}, 
\begin{equation*}
\begin{split}
\sqrt{n} ( \hat{\vecf{\beta}}_{\mathrm{MM}} - \vecf{\beta}_{0\tau} )
&= -\matf{G}^{-1}
    \frac{1}{\sqrt{n}} \sum_{i=1}^{n} 
  \vecf{g}_{ni}(\vecf{\beta}_{0\tau},\tau) 
  +o_p(1) , \\
\sqrt{n} ( \hat{\vecf{\beta}}_{\mathrm{MM}} - \vecf{\beta}_{0\tau} )
&\dconv 
\Normalp{\vecf{0}}{(\matf{G}^{\tr} \matf{\Sigma}_{\tau}^{-1} \matf{G})^{-1}} , \\
\sqrt{n} ( \hat{\vecf{\beta}}_{\mathrm{GMM}} - \vecf{\beta}_{0\tau} )
&= -\{ \matf{G}\tr \matf{W} \matf{G} \}^{-1}
\matf{G}\tr \matf{W}
  \frac{1}{\sqrt{n}} \sum_{i=1}^{n} 
  \vecf{g}_{ni}(\vecf{\beta}_{0\tau},\tau) 
+o_p(1)  , \\
\sqrt{n} ( \hat{\vecf{\beta}}_{\mathrm{GMM}} - \vecf{\beta}_{0\tau} )
&\dconv 
\Normal\bigl( \vecf{0} , (\matf{G}\tr \matf{W} \matf{G} )^{-1} 
\matf{G}\tr \matf{W} \matf{\Sigma}_{\tau} \matf{W} \matf{G}
(\matf{G}\tr \matf{W} \matf{G} )^{-1} \bigr) , 
\end{split}
\end{equation*}
where 
$\matf{G}$ is from \cref{eqn:def-G}, 
$\matf{W}$ is from \cref{a:W}, 
and 
$\matf{\Sigma}_{\tau}$ is from \cref{eqn:Mn0-normality}. 
\end{theorem}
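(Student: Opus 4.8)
The plan is to treat both estimators by the standard mean value / delta-method route, which is now available precisely because smoothing makes $\hat{\vecf{M}}_n(\cdot,\tau)$ differentiable in $\vecf{\beta}$ (by \cref{a:Lambda,a:Itilde}). I would start with the exactly identified MM estimator. By \cref{thm:consistency} we have $\hat{\vecf{\beta}}_{\mathrm{MM}}\pconv\vecf{\beta}_{0\tau}$, and since $\vecf{\beta}_{0\tau}$ is interior (\cref{a:B}), for large $n$ the estimator lies in a neighborhood where a component-wise mean value expansion of the defining equation \cref{eqn:def-est-MM} is valid. Expanding each of the $d_\beta$ coordinates of $\hat{\vecf{M}}_n(\hat{\vecf{\beta}}_{\mathrm{MM}},\tau)=\vecf{0}$ about $\vecf{\beta}_{0\tau}$ yields $\vecf{0}=\hat{\vecf{M}}_n(\vecf{\beta}_{0\tau},\tau)+\bar{\matf{G}}_n(\hat{\vecf{\beta}}_{\mathrm{MM}}-\vecf{\beta}_{0\tau})$, where the $k$th row of $\bar{\matf{G}}_n$ is the gradient of the $k$th moment evaluated at an intermediate point $\tilde{\vecf{\beta}}_k$ lying between $\vecf{\beta}_{0\tau}$ and $\hat{\vecf{\beta}}_{\mathrm{MM}}$. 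Solving gives $\sqrt{n}(\hat{\vecf{\beta}}_{\mathrm{MM}}-\vecf{\beta}_{0\tau})=-\bar{\matf{G}}_n^{-1}\sqrt{n}\hat{\vecf{M}}_n(\vecf{\beta}_{0\tau},\tau)$.

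The three remaining ingredients are then: (a) $\sqrt{n}\hat{\vecf{M}}_n(\vecf{\beta}_{0\tau},\tau)\dconv\Normalp{\vecf{0}}{\matf{\Sigma}_\tau}$, which is exactly \cref{lem:Mn0-normality} (this is where the negligible-bias argument $\sqrt{n}\E[\hat{\vecf{M}}_n(\vecf{\beta}_{0\tau},\tau)]\to\vecf{0}$ has already been absorbed); (b) $\bar{\matf{G}}_n\pconv\matf{G}$, which is precisely the content of \cref{a:G-est}, since consistency forces each $\tilde{\vecf{\beta}}_k\pconv\vecf{\beta}_{0\tau}$; and (c) invertibility of $\matf{G}$, which holds by the full-rank \cref{a:U}(ii) ($\matf{G}$ is square under exact identification). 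Combining via Slutsky's lemma and the CMT gives the asymptotically linear representation $-\matf{G}^{-1}\frac{1}{\sqrt{n}}\sum_i\vecf{g}_{ni}(\vecf{\beta}_{0\tau},\tau)+o_p(1)$ and hence the stated limit with covariance $(\matf{G}\tr\matf{\Sigma}_\tau^{-1}\matf{G})^{-1}=\matf{G}^{-1}\matf{\Sigma}_\tau(\matf{G}\tr)^{-1}$.

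For the over-identified GMM estimator I would work from the first-order condition of \cref{eqn:def-est-GMM}: since the smoothed criterion is differentiable and $\hat{\vecf{\beta}}_{\mathrm{GMM}}$ is interior for large $n$, $\hat{\matf{G}}(\hat{\vecf{\beta}}_{\mathrm{GMM}})\tr\hat{\matf{W}}\hat{\vecf{M}}_n(\hat{\vecf{\beta}}_{\mathrm{GMM}},\tau)=\vecf{0}$, where $\hat{\matf{G}}(\vecf{\beta})$ is the Jacobian of $\hat{\vecf{M}}_n$. Substituting the same expansion $\hat{\vecf{M}}_n(\hat{\vecf{\beta}}_{\mathrm{GMM}},\tau)=\hat{\vecf{M}}_n(\vecf{\beta}_{0\tau},\tau)+\bar{\matf{G}}_n(\hat{\vecf{\beta}}_{\mathrm{GMM}}-\vecf{\beta}_{0\tau})$ and solving yields $\sqrt{n}(\hat{\vecf{\beta}}_{\mathrm{GMM}}-\vecf{\beta}_{0\tau})=-[\hat{\matf{G}}\tr\hat{\matf{W}}\bar{\matf{G}}_n]^{-1}\hat{\matf{G}}\tr\hat{\matf{W}}\sqrt{n}\hat{\vecf{M}}_n(\vecf{\beta}_{0\tau},\tau)$. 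Both Jacobians converge to $\matf{G}$ (by \cref{a:G-est} and consistency, the same argument applying at any consistent estimator) and $\hat{\matf{W}}\pconv\matf{W}$ by \cref{a:W}, so $\matf{G}\tr\matf{W}\matf{G}$ is invertible by \cref{a:U}(ii) and \cref{a:W}; the influence-function form and the sandwich covariance then follow as before from \cref{lem:Mn0-normality} and the CMT.

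The main obstacle is step (b), the convergence of the row-specific Jacobian $\bar{\matf{G}}_n$ to the density-weighted matrix $\matf{G}$ in \cref{eqn:def-G}. This is genuinely the crux: the smoothed Jacobian is a kernel-type estimator (involving $\tilde{I}'(\cdot)/h_n$) of an object defined through the conditional density $f_{\Lambda\mid\vecf{Z},\vecf{D}}(0\mid\cdot)$, so establishing it requires controlling a density-estimation bias--variance trade-off together with the stochastic equicontinuity needed to replace the intermediate points $\tilde{\vecf{\beta}}_k$ by $\vecf{\beta}_{0\tau}$ under weak dependence. In the present proof this difficulty is packaged into the high-level \cref{a:G-est} (with primitive sufficient conditions deferred to the appendix), so the argument here reduces to invoking it; but it is worth emphasizing that differentiability of the smoothed moments is exactly what makes the mean value expansion --- and therefore this entire approach --- legitimate, whereas the unsmoothed indicator would leave $\bar{\matf{G}}_n$ undefined.
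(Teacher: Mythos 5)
Your proposal is correct and follows essentially the same route as the paper's proof: a coordinate-wise mean value expansion of the smoothed moment equation (or of the GMM first-order condition), with the Jacobian convergence delegated to \cref{a:G-est}, the centered CLT to \cref{lem:Mn0-normality}, and the limits assembled by the CMT. Your closing remark correctly identifies \cref{a:G-est} as the substantive step, which the paper likewise treats as a high-level assumption with primitive conditions deferred to the appendix.
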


As usual, choosing a weighting matrix such that $\hat{\matf{W}} \pconv \matf{W} = \matf{\Sigma}_{\tau}^{-1}$ is asymptotically efficient in the sense that the resulting asymptotic covariance matrix $(\matf{G}^{\tr} \matf{\Sigma}_{\tau}^{-1} \matf{G})^{-1}$ minus the above GMM covariance matrix is negative semidefinite. 
This is the sense in which the two-step estimator in \cref{eqn:def-est-2s} is efficient.

\Cref{thm:normality} can also be used to construct Wald tests in the usual way. 
The result and its proof are also helpful for constructing ``distance metric'' hypothesis tests and over-identification tests. 
We detail these in a separate paper on inference (in progress).

A consistent long-run variance estimator for quantile models is currently lacking in the literature, as lamented in other quantile papers. 
For example, \citet[p.\ 268]{Kato12} notes that results in \citet{Andrews91} do not apply because they assume smoothness; specifically, Assumptions B(iii) and C(ii) are violated for unsmoothed quantile models. 
Similarly, Assumptions 4 and 5 in \citet{NeweyWest87} are violated, as is Assumption 4 in \citet{deJongDavidson00}. 
Our smoothing yields differentiability, but also a triangular array, violating (2.2) in \citet{Andrews91}, for example. 
However, \citet{deJongDavidson00} allow triangular arrays and generally have very weak conditions. 
% provide consistent long-run variance estimation under very weak conditions for triangular arrays. 
In future work, we hope to verify their Assumption 4 for our smoothed quantile GMM setting. 
% 
% The critical question would be whether the conditions in their Assumption 4 hold in our specific application. 
% These conditions generally restrict the (limit of the average) magnitude of the derivative. 
% This may entail some restriction on our sequence of smoothing bandwidths $h_n$, since as $h_n \to 0$ the derivative gets very large, albeit in a smaller and smaller range of values of the residual. 
% This seems like a promising path that we plan to pursue in a follow-up paper in careful detail. 
% Such a result would be of practical help in estimating asymptotic covariance matrices even for unsmoothed quantile estimators, so we thank you again for the encouragement to think more critically about this topic. 
% k DISCUSS LRV ESTIMATION: LACK OF RESULTS FOR (UNSMOOTHED) QUANTILE MODELS IN THE LITERATURE (SEE COMMENTS IN Kato12, etc.); MAYBE \citet{deJongDavidson00} CAN BE APPLIED TO OUR SMOOTHED CASE. 
% k KATO (p.\ 268): CAN'T USE ANDREWS (1991) b/c of Assumptions B(iii) and C(ii) (TWICE) DIFFERENTIABILITY OF RESIDUAL WRT PARAMETER; BUT CAN'T USE SMOOTHING B/C (2.2) HAS TIME SERIES, NOT TRIANGULAR ARRAY!

\section{Monte {Carlo} simulations}
\label{sec:sim}

This section reports Monte Carlo simulation results to illustrate the finite-sample performance of the proposed methods. 
Replication code is available on the third author's website.\footnote{It is written in R \citep{R.core} and uses packages from \citet{R.pracma} and \citet{R.GenSA}.}
% the solver \texttt{pracma::newtonsys} \citep{R.pracma}.} 

The following DGPs/models are used; details are in \cref{sec:app-comp-DGPs} and the code. 
DGP 1 has a binary treatment, binary IV, and iid sampling, as with a randomized treatment offer but self-selection into treatment, and the treatment effect increases with the quantile index $\tau$. 
DGPs 2 and 3 are time series regressions with measurement error and either Gaussian (DGP 2) or Cauchy (DGP 3) errors in the outcome equation, but no slope heterogeneity. 
The first three models are exactly identified, so the smoothed MM and GMM estimators are identical; we compare these with the usual QR and IV estimators. 
DGP 4 is for estimating a log-linearized quantile Euler equation using time series data, as in our empirical application; the model is over-identified, so we can compare MM with different GMM estimators.

To quantify precision, instead of root mean squared error (RMSE), we report ``robust RMSE.'' 
This replaces bias with median bias and replaces standard deviation with interquartile range (divided by $1.349$); it equals RMSE if the sampling distribution is normal. 
% with median bias replacing bias and interquartile range (divided by $1.349$) replacing standard deviation; this equals RMSE if the sampling distribution is normal. 
The primary reason to use the ``robust'' version is that sometimes the usual IV estimator does not even possess a first moment in finite samples, let alone finite variance \citep[e.g.,][]{Kinal80}.% 
\footnote{If one really cares about finite-sample RMSE per se, then OLS should be preferred to IV in the cases where the IV RMSE is infinite but the OLS RMSE is finite.} 
% If the sampling distribution is normal, then robust RMSE equals RMSE. 

\begin{table}[!htb]
\centering
\caption{\label{tab:sim-robust-RMSE1gamma}Simulated precision of estimators of $\gamma_\tau$.}
\sisetup{round-precision=2}
\begin{threeparttable}
\begin{tabular}{ccrcrrrcrrr}
\toprule
&    &       & & \multicolumn{3}{c}{Robust RMSE} & & \multicolumn{3}{c}{Median Bias} \\
\cmidrule{5-7}\cmidrule{9-11}
DGP & $\tau$ & $n$ 
    & & S(G)MM & QR & IV 
    & & S(G)MM & QR & IV \\
\midrule
1                    & $0.25$ & $\num{    20}$ && $\num{  26.51}$ & $\num{  31.81}$ & $\num{  41.71}$ && $\num{  18.00}$ & $\num{  27.87}$ & $\num{  40.30}$ \\
                     & $    $ & $\num{    50}$ && $\num{  19.10}$ & $\num{  27.99}$ & $\num{  40.53}$ && $\num{  11.64}$ & $\num{  26.22}$ & $\num{  39.93}$ \\
                     & $    $ & $\num{   200}$ && $\num{  10.94}$ & $\num{  25.74}$ & $\num{  40.17}$ && $\num{   4.09}$ & $\num{  25.22}$ & $\num{  40.03}$ \\
                     & $    $ & $\num{   500}$ && $\num{   8.61}$ & $\num{  25.33}$ & $\num{  40.12}$ && $\num{   1.54}$ & $\num{  25.07}$ & $\num{  40.07}$ \\
                     & $0.50$ & $\num{    20}$ && $\num{  19.20}$ & $\num{  22.14}$ & $\num{  18.69}$ && $\num{   9.04}$ & $\num{  17.48}$ & $\num{  15.30}$ \\
                     & $    $ & $\num{    50}$ && $\num{  13.87}$ & $\num{  21.16}$ & $\num{  16.47}$ && $\num{   4.92}$ & $\num{  18.86}$ & $\num{  14.93}$ \\
                     & $    $ & $\num{   200}$ && $\num{   8.13}$ & $\num{  20.50}$ & $\num{  15.38}$ && $\num{   1.19}$ & $\num{  19.99}$ & $\num{  15.03}$ \\
                     & $    $ & $\num{   500}$ && $\num{   5.11}$ & $\num{  20.16}$ & $\num{  15.21}$ && $\num{   0.52}$ & $\num{  19.96}$ & $\num{  15.07}$ \\
2                    & $0.25$ & $\num{    20}$ && $\num{   1.34}$ & $\num{   0.59}$ & $\num{   1.34}$ && $\num{  -0.37}$ & $\num{  -0.52}$ & $\num{  -0.24}$ \\
                     & $    $ & $\num{    50}$ && $\num{   0.86}$ & $\num{   0.55}$ & $\num{   0.73}$ && $\num{  -0.09}$ & $\num{  -0.53}$ & $\num{  -0.03}$ \\
                     & $    $ & $\num{   200}$ && $\num{   0.42}$ & $\num{   0.52}$ & $\num{   0.31}$ && $\num{  -0.01}$ & $\num{  -0.51}$ & $\num{   0.02}$ \\
                     & $    $ & $\num{   500}$ && $\num{   0.26}$ & $\num{   0.50}$ & $\num{   0.20}$ && $\num{   0.00}$ & $\num{  -0.50}$ & $\num{   0.02}$ \\
                     & $0.50$ & $\num{    20}$ && $\num{   1.29}$ & $\num{   0.58}$ & $\num{   1.34}$ && $\num{  -0.37}$ & $\num{  -0.51}$ & $\num{  -0.24}$ \\
                     & $    $ & $\num{    50}$ && $\num{   0.83}$ & $\num{   0.54}$ & $\num{   0.73}$ && $\num{  -0.09}$ & $\num{  -0.51}$ & $\num{  -0.03}$ \\
                     & $    $ & $\num{   200}$ && $\num{   0.40}$ & $\num{   0.51}$ & $\num{   0.31}$ && $\num{   0.02}$ & $\num{  -0.50}$ & $\num{   0.02}$ \\
                     & $    $ & $\num{   500}$ && $\num{   0.26}$ & $\num{   0.50}$ & $\num{   0.20}$ && $\num{   0.02}$ & $\num{  -0.50}$ & $\num{   0.02}$ \\
3                    & $0.25$ & $\num{    20}$ && $\num{   2.72}$ & $\num{   0.75}$ & $\num{   3.91}$ && $\num{  -0.42}$ & $\num{  -0.55}$ & $\num{  -0.20}$ \\
                     & $    $ & $\num{    50}$ && $\num{   1.68}$ & $\num{   0.59}$ & $\num{   3.17}$ && $\num{  -0.02}$ & $\num{  -0.51}$ & $\num{   0.19}$ \\
                     & $    $ & $\num{   200}$ && $\num{   0.79}$ & $\num{   0.54}$ & $\num{   2.48}$ && $\num{  -0.02}$ & $\num{  -0.51}$ & $\num{  -0.02}$ \\
                     & $    $ & $\num{   500}$ && $\num{   0.47}$ & $\num{   0.51}$ & $\num{   2.33}$ && $\num{  -0.00}$ & $\num{  -0.50}$ & $\num{   0.06}$ \\
                     & $0.50$ & $\num{    20}$ && $\num{   2.10}$ & $\num{   0.67}$ & $\num{   3.91}$ && $\num{  -0.49}$ & $\num{  -0.53}$ & $\num{  -0.20}$ \\
                     & $    $ & $\num{    50}$ && $\num{   1.36}$ & $\num{   0.56}$ & $\num{   3.17}$ && $\num{  -0.15}$ & $\num{  -0.51}$ & $\num{   0.19}$ \\
                     & $    $ & $\num{   200}$ && $\num{   0.63}$ & $\num{   0.52}$ & $\num{   2.48}$ && $\num{   0.01}$ & $\num{  -0.50}$ & $\num{  -0.02}$ \\
                     & $    $ & $\num{   500}$ && $\num{   0.35}$ & $\num{   0.51}$ & $\num{   2.33}$ && $\num{   0.00}$ & $\num{  -0.50}$ & $\num{   0.06}$ \\
\bottomrule
\end{tabular}
\begin{tablenotes}
  \item $\num{1000}$ replications. ``S(G)MM'' is the estimator in \cref{eqn:def-est-MM,eqn:def-est-GMM} (equivalent here due to exact identification); ``QR'' is quantile regression (no IV); ``IV'' is the usual (mean) IV estimator. 
\end{tablenotes}
\end{threeparttable}
\end{table}
% total elapsed time: 1.332821 hours

\Cref{tab:sim-robust-RMSE1gamma} shows the precision of our smoothed estimator from \cref{eqn:def-est-MM} (``S(G)MM'') with a very small bandwidth $h=0.0001$ (for simplicity), as well as the usual quantile regression (``QR'') estimator (ignoring endogeneity) and the usual (mean) IV estimator.

\Cref{tab:sim-robust-RMSE1gamma} shows that for all DGPs, the smoothed estimator's robust RMSE declines toward zero as $n$ increases. 
In contrast, the QR estimator's robust RMSE never goes to zero due to endogeneity, and 
the IV estimator's robust RMSE fails to go to zero in DGP 1 where there is heterogeneity across quantiles. 
This reflects the theoretical result that only our estimator is consistent for $\gamma_\tau$ when there is endogeneity and heterogeneity.

\Cref{tab:sim-robust-RMSE1gamma} also shows important finite-sample differences not captured by first-order asymptotics. 
With $n=20$, for DGP 2 or 3, the lowest robust RMSE is actually that of QR: despite its (median) bias being the largest due to ignoring the endogeneity, its dispersion is so much smaller than the other estimators' dispersions that its overall robust RMSE is the smallest. 
This advantage persists to $n=50$, but eventually $n$ is large enough for the (median) bias to dominate. 
In DGPs 2 and 3 that lack slope heterogeneity, the IV estimator is the most efficient when errors are Gaussian (and $n$ is large enough), but not with Cauchy errors, reflecting the greater efficiency of the median (over the mean) when errors are heavy-tailed.

\Cref{tab:sim-GMM} compares simulated robust RMSE of three smoothed estimators (all with $h=0.1$) of log-linearized quantile Euler equations, specifically the EIS parameter. 
Compared to the GMM estimator with identity weighting matrix, the two-step GMM estimator is always more efficient. 
The biggest such advantage is with the smallest sample size, $n=50$; this seems surprising since the two-step GMM's estimated weighting matrix has the largest variance in that case. 
Two-step GMM is not always better (or always worse) than the MM estimator that takes the linear projection of the (lone) endogenous regressor onto the vector of (five) instruments to be the second excluded instrument (in addition to the constant). 
Two-step GMM has a smaller robust RMSE in some cases, even half that of MM with $\tau=0.25$ and $n=500$, but in other cases MM has smaller robust RMSE, especially when $n=50$. 
Perhaps 
% this is unavoidably due to the added variance due to the estimated long-run variance in the two-step estimator, 
the additional variance of the two-step estimator due to its use of a long-run variance estimator (for the weighting matrix) makes it less efficient in these cases, 
a phenomenon explored in non-quantile GMM by \citet{HwangSun15}. 
Alternatively, perhaps future work can improve the long-run variance estimator's precision, in turn improving the two-step estimator's precision.

\begin{table}[htbp]
\centering
% -v11
\caption{\label{tab:sim-GMM}Simulated precision of smoothed estimators of EIS.}
\sisetup{round-precision=3,round-mode=places,table-format=-1.3}
\begin{threeparttable}
\begin{tabular}{ccrcS[table-format=1.3]S[table-format=1.3]S[table-format=1.3]cSSS}
\toprule
      &    &       & & \multicolumn{3}{c}{Robust RMSE} & & \multicolumn{3}{c}{Median Bias} \\
\cmidrule{5-7}\cmidrule{9-11}
&&&&&\multicolumn{2}{c}{GMM} &&&\multicolumn{2}{c}{GMM}\\
\cmidrule{6-7}\cmidrule{10-11}
DGP & $\tau$ & $n$ 
& & {MM} & {(2s)} & {(ID)}
& & {MM} & {(2s)} & {(ID)} \\
\midrule
4 & 0.25 & \numnornd{    50} &&  0.17999837062 &  0.28503968098 &  0.31483778489 &&  0.05704911633 &  0.05478882235 &  0.05385485166 \\
4 & 0.25 & \numnornd{   200} &&  0.12936146919 &  0.09718086708 &  0.10711493621 &&  0.02367156155 &  0.01281140224 &  0.01725805701 \\
4 & 0.25 & \numnornd{   500} &&  0.09702102954 &  0.04682820187 &  0.04687576707 &&  0.00774964285 &  0.00346598677 &  0.00252535822 \\
4 & 0.50 & \numnornd{    50} &&  0.15257915384 &  0.25812098531 &  0.30035712298 &&  0.05542071769 &  0.02281211013 &  0.02461849143 \\
4 & 0.50 & \numnornd{   200} &&  0.09887603827 &  0.12430833031 &  0.14381088407 &&  0.02505958952 &  0.00373343422 & -0.00817069504 \\
4 & 0.50 & \numnornd{   500} &&  0.06584933962 &  0.07926093079 &  0.08900325642 &&  0.01396784182 & -0.00333198778 & -0.00732897789 \\
\bottomrule
\end{tabular}
\begin{tablenotes}
\item $\numnornd{1000}$ replications. ``MM'' is the estimator in \cref{eqn:def-est-MM}; ``GMM(2s)'' is the estimator in \cref{eqn:def-est-2s}; ``GMM(ID)'' is the estimator in \cref{eqn:def-est-GMM} with identity weighting matrix. 
\end{tablenotes}
\end{threeparttable}
\end{table}
% total time elapsed: 10.69896 hours

\section{Application: quantile {Euler} equation}
\label{sec:EIS}

This section illustrates the usefulness of the proposed methods through an empirical example: the estimation of a quantile Euler equation. 
We apply the proposed methodology to an economic model of intertemporal allocation of consumption and estimate the elasticity of intertemporal substitution (EIS). 
The EIS is a parameter of central importance in macro\-economics and finance. 
We refer to \citet{Campbell03}, \citet{Cochrane05}, and \citet{LjungqvistSargent12}, and the references therein, for a comprehensive overview.

There is a large empirical literature that attempts to estimate the EIS; among others, \citet{HansenSingleton83}, \citet{Hall88}, \citet{CampbellMankiw89}, \citet{CampbellViceira99}, \citet{Campbell03}, and \citet{Yogo04}. The majority of the literature relies on the traditional expected utility framework.  
The purpose of this application is to estimate and make inference on the EIS for selected developed countries in \citeposs{Campbell03} data set using the quantile utility maximization model. The quantile model has useful advantages, such as robustness, ability to capture heterogeneity, and separation the notion of risk attitude from the intertemporal substitution.

\Cref{sec:EIS-econ model} describes in detail the model that leads to the quantile Euler equation, establishing parallels with the standard expected utility Euler equation. 
\Cref{sec:EIS-est} describes the estimation procedure, and \cref{sec:EIS-log-linearization} discusses log-linearization for the quantile model. 
\Cref{sec:EIS-interpretation} discusses an interpretation of the parameters in question. 
In \cref{sec:EIS-data} we review the data, and finally \cref{sec:EIS-results} presents the empirical results.

\subsection{Description of the economic model}
\label{sec:EIS-econ model}

\Citet{deCastroGalvao17} employ a variation of the standard economy model of \citet{Lucas78}. 
The economic agents decide on the intertemporal consumption and savings (assets to hold) over an infinity horizon economy, subject to a linear budget constraint. 
The decision generates an intertemporal policy function, which is used to estimate the parameters of interest for a given utility function. 
Their work is related to that of \citet{Giovannetti13}, who works with a similar model but restricts the analysis to two periods, whereas \citet{deCastroGalvao17} consider an infinite horizon.

The specific model is as follows. 
Let $C_{t}$ denote the amount of consumption good that the individual consumes in period $t$. 
At the beginning of period $t$, the consumer has $x_{t}$ units of the risky asset, which pays dividend $d_{t}$. 
The price of the consumption good is normalized to one, while the price of the risky asset in period $t$ is $p(d_{t})$. 
Then, the consumer decides its consumption $C_{t}$ and how many units of the risky asset $x_{t+1}$ to save for the next period, subject to the budget constraint
\begin{align}\label{eq:budget constr}
C_{t} + p(d_{t}) x_{t+1} &\le \left[ d_{t} + p(d_{t})  \right] x_{t},
\end{align}
and positivity restriction
\begin{align}\label{eq:nonneg constr}
C_{t}, x_{t+1} &\ge 0. 
\end{align}
In equilibrium, we have that $x_{t}^{\ast}=1, \forall t,k$. 

So far, the model is exactly the same as the standard Lucas' model, but the objective function will differ. 
In the standard model, the consumer maximizes 
\begin{equation}\label{eq:ee maxim}
\E\left[ \sum_{t=0}^{\infty} \beta^{t} U (C_{t}) \;\biggm\vert\; \Omega_{0} \right], 
\end{equation}
subject to \cref{eq:budget constr,eq:nonneg constr}, where $\beta \in (0,1)$ is the discount factor, $U \colon \R_{+} \mapsto \R$ is the utility function, and $\Omega_0$ is the information set at time $t=0$. 
For the expected utility choice problem, dynamic consistency and the principle of optimality for \cref{eq:ee maxim} imply that at time $s \ge 1$, the consumer chooses $\{C_{t},x_{t}\}_{t \ge s}$ to maximize 
\begin{equation}\label{eq:ee maxim s}
\E \left[ \sum_{t=s}^{\infty}  \beta^{t-s} U (C_{t}) \;\biggm\vert\; \Omega_{s} \right], 
\end{equation}
subject to \cref{eq:budget constr,eq:nonneg constr}.  
The connection between problems \cref{eq:ee maxim,eq:ee maxim s} is made explicit by the linearity of the expectation operator and the law of iterated expectations: 
\begin{align}\notag
&\E \left[ \sum_{t=0}^{\infty} \beta^{t} U (C_{t}) \;\biggm\vert\; \Omega_{0} \right]
\\&=
 U (C_{0}) 
 + \E\biggl[ \beta U(C_{1}) 
          +\E\Bigl[ \beta^{2} U(C_{2}) 
                   +  \E\bigl[ \beta^{3} U(C_{3}) + \cdots \mid \Omega_{2} \bigr]
             \,\bigm\vert\, \Omega_{1} \Bigr] 
    \,\Bigm\vert\,  \Omega_{0} \biggr] . 
\label{eq:eu law it exp}
\end{align}

\Citet{deCastroGalvao17} replace the operator $\E$ in \eqref{eq:eu law it exp} with $\Q_{\tau}$, such that the consumer maximizes the following quantile objective function
\begin{align}
\label{eq:quant expansion} & 
  U (C_{0}) 
 + \Q_\tau\biggl[ \beta_{\tau} U(C_{1}) 
          +\Q_\tau\Bigl[ \beta_{\tau}^{2} U(C_{2}) 
                   +  \Q_\tau\bigl[ \beta_{\tau}^{3} U(C_{3}) + \cdots \mid \Omega_{2} \bigr]
             \,\bigm\vert\, \Omega_{1} \Bigr] 
    \,\Bigm\vert\,  \Omega_{0} \biggr]\\
\notag & = \Q_\tau\biggl[\Q_\tau\Bigl[\Q_\tau\bigl[ U (C_{0}) 
 + \beta_{\tau} U(C_{1}) + \beta_{\tau}^{2} U(C_{2}) 
                   + \beta_{\tau}^{3} U(C_{3}) + \cdots \mid \Omega_{2} \bigr]
             \,\bigm\vert\, \Omega_{1} \Bigr] 
    \,\Bigm\vert\,  \Omega_{0} \biggr]\\
\notag & \equiv \Q_{\tau}^{\infty} \left[\sum_{t=0}^{\infty} \beta_{\tau}^{t} U(C_{t}) \right],
\end{align}
again subject to \cref{eq:budget constr,eq:nonneg constr}, where $\beta_{\tau}\in(0,1)$ is the discount factor for the quantile $\tau$. 
%The notation in equation \cref{eq:cons pref} is defined as 
%\begin{equation*}\label{def:notation Q n}
%\Q^{n} [ \cdot] \equiv  \Q   \left[ \cdots    \Big[ \Q \left[ \ \cdot \ \big| Z_{n}=z_{n}\right]   \Big|  \cdots   \Big]\bigg|  Z_{1}=z \right] ,
%\end{equation*}
%such that \cref{eq:cons pref} represents the following recursive function

%\begin{equation}\label{eq:cons pref}
%\Q_{\tau}^{\infty} \left[ \sum_{t=0}^{\infty} \beta^{t} U(C_{t}) \right],
%\end{equation}

Unfortunately, linearity and the law of iterated expectations do not hold for the $\tau$-quantile operator, $\Q_{\tau}$. 
Thus, in order to preserve dynamic consistency and the principle of optimality, we need to maintain the structure developed in \cref{eq:quant expansion}. 
\Citet{deCastroGalvao17} show that the limit above exists and is well defined. 
Moreover, they show that the quantile preferences are dynamically consistent, the principle of optimality holds, and the corresponding dynamic problem yields a value function, via a fixed-point argument. They further provide conditions so that the value function is differentiable and concave.

When comparing the expected and quantile utility models, we note that the structure in the right-hand side of \cref{eq:eu law it exp} reflects the following associated value function, 
\begin{equation}\label{eqn:EU-value-fn}
v(x_{t},d_{t}) = \max_{ x_{t+1} \ge 0 } \left\{ U\bigl( [d_{t}+p(d_{t})]x_{t} - p(d_{t})x_{t+1} \bigr) + \beta  \E[  v(x_{t+1},d_{t+1}) \mid \Omega_{t} ] \right\}. 
\end{equation}
The value function for the quantile problem is the same as \cref{eqn:EU-value-fn} but with $\Q_\tau$ replacing $\E$: 
\begin{equation}\label{eqn:QU-value-fn}
v(x_{t},d_{t}) = \max_{ x_{t+1} \ge 0 } \left\{ U\bigl( [d_{t}+p(d_{t})]x_{t} - p(d_{t})x_{t+1} ) + \beta_{\tau}  \Q_{\tau}[  v(x_{t+1},d_{t+1}) \mid \Omega_{t} ] \right\}. 
\end{equation}

In addition, \citet{deCastroGalvao17} derive the corresponding Euler equation, using the fact that  in equilibrium, the holdings are $x_{t}=1$ for all $t$: 
\begin{equation}\label{eq:Euler applied to prob}
- p(d_{t}) U'(C_{t}) + \beta_{\tau} \Q_{\tau}[ U'(C_{t+1})  
(d_{t+1} + p(d_{t+1}) ) \mid \Omega_{t} ] 
= 0 .
\end{equation}
Defining the asset's return by 
\begin{equation*}\label{eq:def retorno}
1+r_{t+1} \equiv \frac{ d_{t+1} + p(d_{t+1})}{p(d_{t})} , 
\end{equation*}
the Euler equation in \cref{eq:Euler applied to prob} simplifies to 
\begin{equation}
\label{eq:aee}
 \Q_{\tau}\left [\beta_{\tau} (1+r_{t+1})  \frac{U'(C_{t+1})}{U' (C_{t})} \,\biggm\vert\,  \Omega_{t} \right] = 1.
\end{equation} 
After parameterizing the utility function, \cref{eq:aee} is a conditional quantile restriction in the form of \cref{eq:qrmodel}, as in our econometric model. 
% that implies (by the law of iterated expectations) moment conditions of the form in \cref{eqn:moments}. 

The quantile Euler equation in \cref{eq:aee} looks similar to the standard Euler equation from expected utility maximization, 
\begin{equation}\label{eq:aeee}
\E\left [\beta (1+r_{t+1})  \frac{U'(C_{t+1})}{U' (C_{t})} \,\biggm\vert\, \Omega_{t} \right] = 1.
\end{equation} 
The expressions inside the conditional quantile and conditional expectation are identical.

For obtaining the mentioned results,  \citet{deCastroGalvao17}  assume the following. 
\begin{assumption}\label{a:pricing}
 \begin{enumerate}
 \item\label{a:pricing-interval} The dividends assume values in $\mathcal{Z} \subseteq \R$, which  is a bounded interval, and  $\mathcal{X} =[0,\bar{x}]$ for some $\bar{x}>1$;
 \item\label{a:pricing-Markov}  $\{d_{t}\}$ is a Markov process with PDF   $f \colon \mathcal{Z} \times \mathcal{Z} \mapsto \R_{+}$, which is continuous, 
 symmetric ($f(a,b)=f(b,a)$), % k [first] SYMMETRIC IN (z,w) like f(z,w)=f(w,z)? OR SYMMETRIC ABOUT ZERO, like f(z,w)=f(-z,-w)? OR...?
 $f(d_{t},d_{t+1})>0$ for all $(d_{t},d_{t+1}) \in \mathcal{Z} \times \mathcal{Z}$, and satisfies the property that if $h \colon \mathcal{Z} \mapsto \R$ is weakly increasing and  $z \le z'$, then 
 \begin{equation}\label{eq:Mono K}
 \int_{\mathcal{Z}} h(\alpha) f(\alpha \mid z) \, d \alpha  \le 
  \int_{\mathcal{Z}} h(\alpha) f(\alpha \mid z') \, d \alpha ;
 \end{equation}
\item\label{a:pricing-utility} $U \colon \R_{+} \mapsto \R$ is given by $U(c)= \frac{1}{1-\gamma}c^{1-\gamma}$, for $\gamma>0$;
\item\label{a:pricing-z} $z \mapsto z+p(z)$ is $C^{1}$ and non-decreasing, with $\D{}{z} z\left[\ln (z+p(z)) \right] \ge \gamma$. 
\end{enumerate} 
\end{assumption}

Assumptions \cref{a:pricing}\cref{a:pricing-interval,a:pricing-Markov,a:pricing-utility} are standard in economic applications. 
In \Cref{a:pricing}\cref{a:pricing-z}, it is natural to expect that the price $p(z)$ is non-decreasing with the dividend $z$, and $z+p(z)$ being non-decreasing is an even weaker requirement.

\subsection{Estimation}
\label{sec:EIS-est}

We follow a large body of the literature \citep[e.g.,][]{Campbell03} and use isoelastic utility, 
\begin{equation}\label{eqn:utility-fn}
U(C_{t})=\frac{1}{1-\gamma}C_{t}^{1-\gamma}, \quad \gamma>0 . 
\end{equation}
The ratio of marginal utilities is 
\begin{equation}\label{eq:crra}
 \frac{U'(C_{t+1})}{U' (C_{t})} = \left( \frac{C_{t+1}}{C_{t}} \right)^{-\gamma}.
\end{equation}
From \cref{eq:aee,eq:crra}, the Euler equation is thus 
\begin{equation}\label{eq:enlqr}
\Q_{\tau}\left[ \beta_\tau (1+r_{t+1}) ( C_{t+1}/C_{t} )^{-\gamma_\tau} -1 \mid \Omega_{t}\right] = 0. 
\end{equation}
The quantile Euler equation in \cref{eq:enlqr} is a conditional quantile restriction with finite-dimensional parameter vector $(\beta_\tau,\gamma_\tau)$, as in \cref{eq:qrmodel}, so we may use smoothed GMM estimation.

\subsection{Log-linearization}
\label{sec:EIS-log-linearization}

One benefit of the quantile Euler equation is that it may be log-linearized with no approximation error, unlike the standard Euler equation. 
One may rewrite \cref{eq:enlqr} as 
\begin{equation}\label{eqn:epsilon}
\Q_\tau[\epsilon_{t+1}\mid\Omega_t]=1 , \quad 
\epsilon_{t+1} \equiv \beta (1+r_{t+1}) ( C_{t+1} / C_{t} )^{-\gamma} . 
\end{equation}
For general random variable $W$, $\Q_\tau[\ln(W)]=\ln(\Q_\tau[W])$ (``equivariance'') since $\ln(\cdot)$ is strictly increasing and continuous. 
In contrast, $\E[\ln(W)] \le \ln(\E[W])$ by Jensen's inequality. 
% eh CARROL (2001), Attanasio & Low (2004), ...
Continuing from \cref{eqn:epsilon}, 
% taking logs and rearranging, 
\begin{align}\notag
\ln(\epsilon_{t+1})
  &= \ln(\beta) + \ln(1+r_{t+1}) - \gamma \ln(C_{t+1}/C_t) , \\
\ln(C_{t+1}/C_t) 
  &= \gamma^{-1} \ln(\beta) 
    +\gamma^{-1} \ln(1+r_{t+1}) 
    -\gamma^{-1} \ln(\epsilon_{t+1}) 
. \label{eq:loglin} 
\end{align}
If $\gamma>0$, then 
% $-\gamma^{-1}\ln(\epsilon)$ is strictly \emph{decreasing} in $\epsilon$. 
% This switches the quantile index since $\Q_\tau(W)=0$ is equivalent to $\Q_{1-\tau}(-W)=0$, so 
since $\Q_\tau(W)=-\Q_{1-\tau}(-W)$ (and $0=-0$), 
\begin{align*}
0 &= \ln(1) 
  = \Q_\tau[\ln(\epsilon_{t+1}) \mid\Omega_t ]
  = \Q_{1-\tau}[-\gamma^{-1}\ln(\epsilon_{t+1}) \mid \Omega_t ] 
\\&= \Q_{1-\tau}[\ln(C_{t+1}/C_t) - \gamma^{-1} \ln(\beta) 
                - \gamma^{-1} \ln(1+r_{t+1}) \mid \Omega_t ] 
. 
\end{align*}
Thus, 
$\ln(\beta)/\gamma$ and $1/\gamma$ are the intercept and slope (respectively) of the $1-\tau$ IV quantile regression of $\ln(C_{t+1}/C_t)$ on a constant and $\ln(1+r_{t+1})$, with instruments from $\Omega_t$.

Similarly, in the sample, the $g_{ni}$ should be equivalent for nonlinear and log-linear estimation since $\hat{\epsilon}_{t+1} \le 1  \iff  \ln(\hat{\epsilon}_{t+1}) \le 0$. 
% The nonlinear estimator has ``residuals'' $\Lambda_{N}=\epsilon_{t+1}-1$, so $\Lambda_{N} \le 0  \iff  \epsilon_{t+1} \le 1$. 
% The log-linear estimator has $\Lambda_{L}=\gamma^{-1}\ln(\epsilon_{t+1})$, so $\Lambda_{L} \le 0  \iff  \ln(\epsilon_{t+1}) \le 0$ (if $\gamma>0$), which is equivalent to the nonlinear condition. 
% 
% Because the nonlinear and log-linear quantile Euler equations are equivalent, 
Thus, the corresponding estimators should be identical. 
In our application, this is generally true (matching 2+ significant figures), although sometimes there are differences due to the numerical methods, especially simulated annealing, since the log-linear minimization is done in a transformed parameter space. 
Additionally, the nonlinear and log-linear estimators do not match when the latter is negative (implying misspecification) since the above arguments assume $\gamma>0$. 
% k CHECK CLAIM OF 2+ DIGITS (for both gmmq and ivqr.gmm)
% Nonlinear and log-linear point estimates usually match up to 3+ significant figures. 
% The one exception (in our application) is when the log-linear estimate has $\hat\gamma_\tau<0$ (so $\Q_{1-\tau}$ above should instead remain as $\Q_\tau$); the nonlinear estimator often has difficulty finding the solution in such cases and requires more smoothing to find a solution, which sometimes has a positive EIS. 
% 
% For hypothesis testing, the results are also similar, but the log-linear tests tend to produce somewhat smaller $p$-values. 

\subsection{Interpretation}
\label{sec:EIS-interpretation}

The parameters of interest in \eqref{eq:enlqr} are $\beta_{\tau}$ and $\gamma_{\tau}$. 
The former is the usual discount factor. 
The parameter $1/\gamma_{\tau}$ is the standard measure of EIS implicit in the CRRA utility function in \eqref{eqn:utility-fn}. 
The EIS is a measure of responsiveness of the consumption growth rate to the real interest rate. 
As in \citet{Hall88}, in a model with uncertainty, the interpretation is similar, and a high value of EIS means that when the real interest rate is expected to be high, the consumer will actively defer consumption to the later period.

The interpretation of $1/\gamma_{\tau}$ as the EIS remains valid for the quantile maximization model.\footnote{\citeposs{Hall88} argument that $\gamma$ fundamentally represents the EIS rather than 
risk aversion applies here, too.} 
Most directly, this can be seen in equation \eqref{eq:loglin}, where $1/\gamma$ is the derivative of $\ln(C_{t+1}/C_{t})$ with respect to $\ln(1+r_{t+1})$, holding $\epsilon_{t+1}$ constant.

\subsection{Data}
\label{sec:EIS-data}

We use data originally from \cite{Campbell03} and provided by \citet{Yogo04}.\footnote{\url{https://sites.google.com/site/motohiroyogo/research/EIS_Data.zip}} 
% DMK changed this to not be verbatim/plagiarized...
It consists of aggregate level quarterly data for the United States (US), United Kingdom (UK), Australia (AUS), and Sweden (SWE). 
The sample period for the US is 1947Q3--1998Q4, % top row of Yogo04 Table 2, not the other (shorter) one
UK is 1970Q3--1999Q1, 
Australia is 1970Q3--1998Q4, 
and 
Sweden is 1970Q3--1999Q2. 
Consumption is measured at the beginning of the period, consisting of nondurables plus services for the US and total consumption for the other countries, in real, per capita terms. 
The real interest rate deflates a proxy for the nominal short-term rate by the consumer price index. 
Instruments are lags of log real consumption growth, nominal interest rate, inflation, and a log dividend-price ratio for equities. 
For a complete description of the data, see \citet{Campbell03}.

\subsection{Results}
\label{sec:EIS-results}

Other than using quantiles, our estimation follows Table 2 of \citet[p.\ 805]{Yogo04}. 
\Citet{Yogo04} uses 2SLS to estimate the (structural) log-linearized model $\ln(C_{t+1}/C_t) = \delta_0 + \delta_1 \ln(1+r_{t+1}) + u_{t+1}$, where $r_{t+1}$ is the real interest rate, instrumenting for $\ln(1+r_{t+1})$ with twice lagged measures of nominal interest rate, inflation, consumption growth, and log dividend-price ratio, where $\delta_1=1/\gamma$ is the EIS and $\delta_0=\ln(\beta)/\gamma$. 
\Citet{Yogo04} emphasizes that these are strong instruments that predict the real interest rate well, although formally characterizing ``strong'' for IVQR remains an open question.% 
\footnote{In contrast, when trying to estimate the EIS by 2SLS of $\ln(1+r_{t+1})$ on $\ln(C_{t+1}/C_t)$, or replacing the real interest rate with a real stock index return, the instruments are weak because it is difficult to predict consumption growth or stock returns.}

\Cref{tab:EIS-Yogo-beta,tab:EIS-Yogo-gamma} show the quantile Euler equation estimates for $\beta_\tau$ and $\gamma_\tau$ (respectively), 
using the results from \cref{sec:EIS-log-linearization}, with the smoothed MM estimator in \cref{eqn:def-est-MM} and the smoothed two-step GMM estimator in \cref{eqn:def-est-2s}, for the deciles $\tau=0.1,\ldots,0.9$. 
For two-step GMM, the long-run variance estimator follows \citet{Andrews91} with a quadratic spectral kernel. 
For both estimators, the plug-in bandwidth from \citet{KaplanSun17} was used. 
% a small bandwidth of $h_n=0.0001$ was used (except in four cases where somewhat higher $0.00013$ or $0.00017$ was required for the numerical methods' success). 
For comparison, 2SLS estimates are in each table's bottom row.

\sisetup{round-precision=2,round-mode=places,table-format=1.2}
\begin{table}[!htb]
\centering
\caption{\label{tab:EIS-Yogo-beta}Smoothed MM and GMM estimates of $\beta_\tau$, log-linear model.}
\begin{threeparttable}
\begin{tabular}[c]{C BBB BBB BBB BBB}
\toprule
 & &  \multicolumn{2}{c}{US} && \multicolumn{2}{c}{UK} && \multicolumn{2}{c}{AUS} && \multicolumn{2}{c}{SWE} \\
\cmidrule{3-4}\cmidrule{6-7}\cmidrule{9-10}\cmidrule{12-13}
{$\tau$} && {$\hat{\beta}_\mathrm{MM}$} & {$\hat{\beta}_\mathrm{GMM}$} && {$\hat{\beta}_\mathrm{MM}$} & {$\hat{\beta}_\mathrm{GMM}$} && {$\hat{\beta}_\mathrm{MM}$} & {$\hat{\beta}_\mathrm{GMM}$} && {$\hat{\beta}_\mathrm{MM}$} & {$\hat{\beta}_\mathrm{GMM}$}\\
\midrule
0.1 &&      0.917 &      0.918  &&      0.000 &      0.118  &&      0.883 &      0.906  &&      0.948 &      0.944 \\
0.2 &&      0.896 &      0.924  &&      1.860 &      0.753  &&      0.875 &      0.873  &&      0.954 &      0.951 \\
0.3 &&      0.853 &      1.133  &&      1.164 &      0.823  &&      0.818 &      0.749  &&      0.954 &      0.951 \\
0.4 &&      0.109 &      1.043  &&      1.072 &      0.867  &&      2.375 &      1.353  &&      0.952 &      0.947 \\
0.5 &&      1.144 &      1.022  &&      1.040 &      1.032  &&      1.125 &      1.092  &&      0.899 &      0.803 \\
0.6 &&      1.041 &      1.013  &&      1.016 &      1.011  &&      1.020 &      1.028  &&      0.941 &      1.067 \\
\rowstyle{\bfseries}
0.7 &&      1.015 &      1.008  &&      0.998 &      1.001  &&      0.998 &      1.001  &&      0.970 &      0.971 \\
\rowstyle{\bfseries}
0.8 &&      1.003 &      1.002  &&      0.981 &      0.990  &&      0.980 &      0.988  &&      0.970 &      0.970 \\
0.9 &&      0.990 &      0.995  &&      0.958 &      0.973  &&      0.959 &      0.976  &&      0.965 &      0.963 \\
\midrule
{2SLS} && \multicolumn{2}{c}{      1.08} && \multicolumn{2}{c}{      1.03} && \multicolumn{2}{c}{      1.11} && \multicolumn{2}{c}{      0.27}\\
\bottomrule
\end{tabular}
%
% \begin{tablenotes}
%  \item Using quarterly data from \citet{Yogo04}. Instruments (besides constant) are twice lagged: nominal interest rate, inflation, log consumption growth, and log dividend-price ratio, identical to Table 2 of \citet{Yogo04}.  The smoothing bandwidth is from \citet{KaplanSun17}.  
% \end{tablenotes}
\end{threeparttable}
\end{table}

\sisetup{round-precision=1,round-mode=places,table-format=-2.1}
\begin{table}[!htb]
\centering
\caption{\label{tab:EIS-Yogo-gamma}Smoothed MM and GMM estimates of $\gamma_\tau$, log-linear model.}
\begin{threeparttable}
\begin{tabular}[c]{C BBB BBB BBB BBB}
\toprule
 & &  \multicolumn{2}{c}{US} && \multicolumn{2}{c}{UK} && \multicolumn{2}{c}{AUS} && \multicolumn{2}{c}{SWE} \\
\cmidrule{3-4}\cmidrule{6-7}\cmidrule{9-10}\cmidrule{12-13}
{$\tau$} && {$\hat{\gamma}_\mathrm{MM}$} & {$\hat{\gamma}_\mathrm{GMM}$} && {$\hat{\gamma}_\mathrm{MM}$} & {$\hat{\gamma}_\mathrm{GMM}$} && {$\hat{\gamma}_\mathrm{MM}$} & {$\hat{\gamma}_\mathrm{GMM}$} && {$\hat{\gamma}_\mathrm{MM}$} & {$\hat{\gamma}_\mathrm{GMM}$}\\
\midrule
0.1 &&     -7.232 &     -7.099  &&   -744.010 &   -114.148  &&     -6.946 &     -5.454  &&     -3.368 &     -3.740 \\
0.2 &&    -11.507 &     -8.021  &&     43.113 &    -18.958  &&     -8.962 &     -9.125  &&     -4.001 &     -4.434 \\
0.3 &&    -19.967 &     16.521  &&     13.460 &    -15.667  &&    -17.035 &    -24.638  &&     -5.465 &     -6.113 \\
0.4 &&   -342.539 &      6.902  &&      8.268 &    -14.110  &&     98.933 &     33.935  &&     -8.669 &    -10.203 \\
0.5 &&     26.491 &      4.531  &&      7.347 &      5.342  &&     20.651 &     14.765  &&    -38.725 &    -89.528 \\
0.6 &&     10.879 &      3.659  &&      6.574 &      4.054  &&      7.251 &      8.011  &&   -150.986 &    301.480 \\
\rowstyle{\bfseries}
0.7 &&      6.587 &      3.424  &&      5.859 &      3.691  &&      5.902 &      5.107  &&     13.946 &     13.576 \\
\rowstyle{\bfseries}
0.8 &&      4.944 &      3.240  &&      5.327 &      3.685  &&      5.039 &      3.719  &&      5.330 &      6.445 \\
0.9 &&      4.871 &      3.086  &&      5.020 &      3.970  &&      4.416 &      0.000  &&      3.328 &      4.821 \\
\midrule
{2SLS} && \multicolumn{2}{c}{      16.7} && \multicolumn{2}{c}{       6.0} && \multicolumn{2}{c}{      22.1} && \multicolumn{2}{c}{    -544.4}\\
\bottomrule
\end{tabular}
%
% \begin{tablenotes}
%  \item Using quarterly data from \citet{Yogo04}. Instruments (besides constant) are twice lagged: nominal interest rate, inflation, log consumption growth, and log dividend-price ratio, identical to Table 2 of \citet{Yogo04}.  The smoothing bandwidth is from \citet{KaplanSun17}.  
% \end{tablenotes}
\end{threeparttable}
\end{table}

\Cref{tab:EIS-Yogo-beta,tab:EIS-Yogo-gamma} generally show economically unrealistic estimates at lower $\tau$ but plausible estimates at larger $\tau$. 
For $\tau \le 0.4$, some of the $\beta_\tau$ estimates are unrealistically far from one, and many of the $\gamma_\tau$ estimates are negative. 
For $\tau \ge 0.5$, in contrast, most of the $\beta_\tau$ estimates are close to one, and most $\gamma_\tau$ estimates seem plausible. 
For $\tau \in \{0.7,0.8\}$ in particular, looking across all four countries and both MM and GMM estimates, the estimates are all contained within the ranges $\hat\beta_\tau \in [0.97, 1.02]$ and $\hat\gamma_\tau \in [3.2, 13.9]$.

The differences between MM and two-step GMM estimates are often relatively small, especially when the estimates are reasonable. 
However, the table shows some economically significant differences, such as for the US (even with $\tau \ge 0.6$).

The differences between the quantile and 2SLS estimates can be economically significant. 
This includes the case of Sweden, where the 2SLS estimates are entirely unrealistic: $\hat\beta_\mathrm{2SLS}=0.27$ and $\hat\gamma_\mathrm{2SLS}=-544.4$. 
Although smaller $\tau$ produce unrealistic estimates, the Sweden quantile estimates for $\tau=0.7$ and $\tau=0.8$ have $\hat\beta_\tau=0.97$ and $\hat\gamma_\tau$ in the range $[5.3, 13.9]$, all perfectly reasonable.%
% WRONG: GOT tau CONFUSED WITH 1-tau...  The larger $\tau$ may better capture the response of consumption to interest rate changes; expectation-based Euler equations can struggle with the fact that consumption tends to evolve over time more smoothly than the real interest rate.\footnote{We thank Duke Kao for this idea.} 
\footnote{Further, among the other seven countries whose data \citet{Yogo04} examined (Netherlands, Canada, France, Germany, Italy, Japan, Switzerland), all seven had negative 2SLS estimates $\hat\gamma_\mathrm{2SLS}<0$, but five of the seven had positive $\hat\gamma_\tau>0$ with $\tau=0.9$ (and plausible $\hat\beta_\tau$).} 
For the other countries, the $\tau=0.5$ estimates are most similar to 2SLS, but $\tau \ge 0.7$ leads to more realistic $\hat{\beta}_\tau$ and smaller $\hat{\gamma}_\tau$.

In all, this empirical application illustrates that the quantile utility maximization model and new smoothed estimators serve as important tools to study economic behavior.

\section{Conclusion}
\label{sec:conclusion}

For finite-dimensional parameters defined by general quantile-type restrictions, we have developed smoothed MM and GMM estimation and asymptotic theory, for exactly and over-identified models, respectively, allowing for weakly dependent data and nonlinear models. 
This includes nonlinear IV quantile regression and quantile Euler equations as special cases, and our theory is robust to misspecification of the structural models.

The empirical results suggest that quantile utility maximization combined with our smoothed estimation can provide a useful, economically meaningful alternative to estimation based on expected utility. 
A bonus feature is the ability to log-linearize the quantile Euler equation without any approximation error, unlike the standard Euler equation. 
Future work may apply our methods to household panel data or carefully consider how to determine $\tau$.

There is more to explore econometrically, too: 
quantile GMM inference (in progress), 
IVQR averaging estimators (in progress), %\citep[e.g., like][]{ChengLiaoShi16} 
% like in \citet{Hansen17} or \citet{ChengLiaoShi16}, 
optimal bandwidth choice, 
non/semiparametric models, 
fixed-smoothing asymptotic approximations, 
higher-order bootstrap refinements, 
formally establishing \cref{a:G-est} when $\vecf{D}_i$ depends on $\vecf{\beta}_{0\tau}$, 
% extending the ``one-step'' Theorem 3.5 in \citet{NeweyMcFadden94} to account for non-smoothness, 
and results uniform in $\tau$, 
among other topics. 
% In addition, an overidentification test can be constructed by modifying Section 9.5 in \citet{NeweyMcFadden94}, viewing the SIVQR estimator as a GMM estimator under exact identification. 
% Although the most familiar GMM overidentification test is the $J$-test based on the (overidentified) GMM criterion with the (estimated) efficient weight matrix, \citet{NeweyMcFadden94} describe a more general test that only requires a $\sqrt{n}$-consistent estimator of the parameter vector.  
% Under the assumptions of \cref{thm:normality}, our estimator $\hat{\vecf{\beta}}_\tau$ is such an estimator.

\appendix

\section{Proofs}
\label{sec:app-proofs}

\begin{proof}[Proof of \Cref{prop:local-ID}]
See Appendix A.1 of \citet{ChenChernozhukovLeeNewey14}. 
\end{proof}

\begin{proof}[Proof of \Cref{lem:smooth-EMn-ULLN}]
Noting that $\absbig{\vecf{Z}[\tilde{I}(\cdot)-\Ind{\cdot}]} \le \abs{\vecf{Z}}$ (i.e., $\abs{\vecf{Z}}$ is a dominating function) and applying the dominated convergence theorem (since $\vecf{Z}$ has finite expectation by \cref{a:Z}), since $h_n\to0$ by \cref{a:h}, 
\begin{align*}
& \lim_{h_n\to0} \sup_{\vecf{\beta}\in\mathcal{B}} 
\normbig{  \E[ \hat{\vecf{M}}_n(\vecf{\beta},\tau) ] 
       -\E\left[\vecf{Z}\left(\Ind{\Lambda(\vecf{Y},\vecf{X},\vecf{\beta})\le0}-\tau\right) \right]
}
\\&= \lim_{h_n\to0} \max_{\vecf{\beta}\in\mathcal{B}} 
\normbig{ \E\left\{\vecf{Z}\left[\tilde{I}\left(\frac{-\Lambda(\vecf{Y},\vecf{X},\vecf{\beta})}{h_n}\right) - \Ind{\Lambda(\vecf{Y},\vecf{X},\vecf{\beta})\le0}\right] \right\}
}
\\&= \lim_{h_n\to0} 
\normbig{ \E\left\{\vecf{Z}\left[\tilde{I}\left(\frac{-\Lambda(\vecf{Y},\vecf{X},\vecf{\beta}^*_n)}{h_n}\right) - \Ind{\Lambda(\vecf{Y},\vecf{X},\vecf{\beta}^*_n)\le0}\right] \right\}
}
\\&= \normbig{ \E\left\{
      \lim_{h_n\to0} \vecf{Z} \left[\tilde{I}\left(\frac{-\Lambda(\vecf{Y},\vecf{X},\vecf{\beta}^*_n)}{h_n}\right) - \Ind{\Lambda(\vecf{Y},\vecf{X},\vecf{\beta}^*_n)\le0}\right] 
   \right\}
}
\\&= \vecf{0} 
\label{eqn:smooth-unsmooth-limit-DCT}\refstepcounter{equation}\tag{\theequation}
\end{align*}
as long as there is no probability mass at $\Lambda(\vecf{Y},\vecf{X},\vecf{\beta})=0$ for any $\vecf{\beta}\in\mathcal{B}$ and almost all $\vecf{Z}$, which is indeed true by \Cref{a:Y}. 
The notation $\vecf{\beta}^*_n$ denotes the value attaining the maximum, which exists since $\mathcal{B}$ is compact by \cref{a:B}. 
\end{proof}

\begin{proof}[Proof of \Cref{thm:consistency}]

We first prove consistency of the smoothed method of moments estimator. 
We then prove consistency of the smoothed GMM estimator. 

\paragraph{MM Consistency.} To prove consistency of $\hat{\vecf{\beta}}_{\mathrm{MM}}$, we show that the conditions of Theorem 5.9 in \citet{vanderVaart98} are satisfied. 
Alternatively, one could apply Theorem 2.1 in \citet[p.\ 2121]{NeweyMcFadden94}, where $\hat{\vecf{\beta}}$ maximizes $\hat{Q}_n(\vecf{\beta}) \equiv -\lVert\hat{\vecf{M}}_n(\vecf{\beta},\tau)\rVert$ with $\hat{Q}_n(\hat{\vecf{\beta}})=0$. 

Combining results from \cref{a:ULLN,lem:smooth-EMn-ULLN} and the triangle inequality,
\begin{align*}
& \sup_{\vecf{\beta}\in\mathcal{B}}
\absbig{ \hat{\vecf{M}}_n(\vecf{\beta},\tau) - 
\vecf{M}(\vecf{\beta},\tau)
}
\\&=
\sup_{\vecf{\beta}\in\mathcal{B}}
\absbig{ \hat{\vecf{M}}_n(\vecf{\beta},\tau) 
    - \E\bigl[ \hat{\vecf{M}}_n(\vecf{\beta},\tau) \bigr] 
    + \E\bigl[ \hat{\vecf{M}}_n(\vecf{\beta},\tau) \bigr] 
    - \vecf{M}(\vecf{\beta},\tau) }
\\&\le
\overbrace{\sup_{\vecf{\beta}\in\mathcal{B}}
\absbig{ \hat{\vecf{M}}_n(\vecf{\beta},\tau) 
    - \E\bigl[ \hat{\vecf{M}}_n(\vecf{\beta},\tau) \bigr] }}^{=o_p(1)\textrm{ by \cref{a:ULLN}}}
+
\overbrace{\sup_{\vecf{\beta}\in\mathcal{B}}
\absbig{ \E\bigl[ \hat{\vecf{M}}_n(\vecf{\beta},\tau) \bigr] 
    - \vecf{M}(\vecf{\beta},\tau) }}^{=o_p(1)\textrm{ by \cref{lem:smooth-EMn-ULLN}}}
\\&= o_p(1)+o_p(1)
   = o_p(1) . 
\label{eqn:Mn-pt-Thm59-cond1}\refstepcounter{equation}\tag{\theequation}
\end{align*}
This satisfies the first condition of Theorem 5.9 in \citet[p.\ 46]{vanderVaart98}, or (combined with the continuity of $\normbig{\cdot}$) condition (iv) in Theorem 2.1 of \citet{NeweyMcFadden94}. 

For the second condition of Theorem 5.9 in \citet{vanderVaart98}, since $\mathcal{B}$ is a compact subset of Euclidean space, 
so is the set 
\begin{equation*}
\{\vecf{\beta} : \norm{ \vecf{\beta}-\vecf{\beta}_{0\tau} } \ge \epsilon, 
                 \vecf{\beta} \in \mathcal{B} \} 
\end{equation*}
for any $\epsilon>0$. 
Writing out 
\begin{align*}
\vecf{M}(\vecf{\beta},\tau)
&= \E\left\{ \vecf{Z}_i \left[ \Ind{\Lambda(\vecf{Y}_i,\vecf{X}_i,\vecf{\beta})\le0} - \tau \right] \right\} 
\\&= \E\left( \E\left\{ \vecf{Z}_i \left[ \Ind{\Lambda(\vecf{Y}_i,\vecf{X}_i,\vecf{\beta})\le0} - \tau \right] \mid \vecf{Z}_i \right\} \right)
\\&= \E\left\{ \vecf{Z}_i \left[ \Pr\bigl( \Lambda(\vecf{Y}_i,\vecf{X}_i,\vecf{\beta}) \le 0 \mid \vecf{Z}_i \bigr) - \tau \right] \right\} , 
\end{align*}
we see that the function $\vecf{M}(\vecf{\beta},\tau)$ is continuous in $\vecf{\beta}$ given \cref{a:Y,a:Lambda}. 
Note that \cref{a:Lambda} alone is not sufficient: it implies $\lim_{\vecf{\delta}\to\vecf{0}}\Lambda(\vecf{Y}_i,\vecf{X}_i,\vecf{\beta}+\vecf{\delta})\to \Lambda(\vecf{Y}_i,\vecf{X}_i,\vecf{\beta})$ (for any realization $\omega\in\Omega$ in the implicit underlying probability space), but $\Ind{\cdot\le0}$ is not a continuous function. 
Specifically, it is discontinuous at zero, so the continuous mapping theorem only guarantees convergence (for $\omega\in\Omega$) where $\Lambda(\vecf{Y}_i,\vecf{X}_i,\vecf{\beta}) \ne 0$. 
\Cref{a:Y} assumes this is a zero probability event (conditional on almost all $\vecf{Z}_i$), so $\Ind{\Lambda(\vecf{Y}_i,\vecf{X}_i,\vecf{\beta}+\vecf{\delta}) \le 0}$ still converges almost surely to $\Ind{\Lambda(\vecf{Y}_i,\vecf{X}_i,\vecf{\beta}) \le 0}$ as $\vecf{\delta}\to\vecf{0}$ (i.e., the set of $\omega\in\Omega$ for which it does not converge has measure zero). 
Altogether, by \cref{a:Lambda,a:Y}, the bounded convergence theorem, and the continuous mapping theorem, writing $\Lambda_i\equiv \Lambda(\vecf{Y}_i,\vecf{X}_i,\vecf{\beta})$, 
\begin{align*}
& \lim_{\vecf{\delta}\to\vecf{0}}
\Pr\bigl( \Lambda(\vecf{Y}_i,\vecf{X}_i,\vecf{\beta}+\vecf{\delta}) \le 0 \mid \vecf{Z}_i \bigr)
\\&= \lim_{\vecf{\delta}\to\vecf{0}}
\E\bigl( \Ind{\Lambda(\vecf{Y}_i,\vecf{X}_i,\vecf{\beta}+\vecf{\delta}) \le 0} \mid \vecf{Z}_i \bigr)
\\&= \E\bigl( \lim_{\vecf{\delta}\to\vecf{0}} \Ind{\Lambda(\vecf{Y}_i,\vecf{X}_i,\vecf{\beta}+\vecf{\delta}) \le 0} \mid \vecf{Z}_i \bigr)
\\&= \E\bigl( \Ind{\Lambda(\vecf{Y}_i,\vecf{X}_i,\vecf{\beta}) \le 0} \mid \vecf{Z}_i , \Lambda_i\ne0 \bigr)
     \overbrace{\Pr(\Lambda_i\ne0\mid\vecf{Z}_i)}^{=1\textrm{ a.s., by \cref{a:Y}}}
\\&\quad+ 
  \E\bigl( \lim_{\vecf{\delta}\to\vecf{0}} \Ind{\Lambda(\vecf{Y}_i,\vecf{X}_i,\vecf{\beta}+\vecf{\delta}) \le 0} \mid \vecf{Z}_i , \Lambda_i=0 \bigr)
  \overbrace{\Pr( \Lambda_i=0 \mid \vecf{Z}_i )}^{=0\textrm{ a.s., by \cref{a:Y}}} 
\\&= \E\bigl( \Ind{\Lambda(\vecf{Y}_i,\vecf{X}_i,\vecf{\beta}) \le 0} \mid \vecf{Z}_i \bigr) 
\end{align*}
almost surely. 

Since a continuous function on a compact set attains a minimum, letting $\vecf{\beta}^*$ denote the minimizer, 
\begin{equation}\label{eqn:Mn-pt-Thm59-cond2}
\inf_{\vecf{\beta}:\norm{\vecf{\beta}-\vecf{\beta}_{0\tau}}\ge\epsilon}
\norm{ \vecf{M}(\vecf{\beta},\tau) }
=
\min_{\vecf{\beta}:\norm{\vecf{\beta}-\vecf{\beta}_{0\tau}}\ge\epsilon}
\norm{ \vecf{M}(\vecf{\beta},\tau) }
=
\norm{ \vecf{M}(\vecf{\beta}^*,\tau) }
> 0
\end{equation}
by \cref{a:B}, which says that for any $\vecf{\beta}\ne\vecf{\beta}_{0\tau}$, $\vecf{M}(\vecf{\beta},\tau)\ne\vecf{0}$, so $\norm{\vecf{M}(\vecf{\beta}^*,\tau)}>0$ (since $\norm{\cdot}$ is a norm). 
Alternatively, for the conditions in Theorem 2.1 of \citet{NeweyMcFadden94}, 
(i) and (ii) are directly assumed in our \cref{a:B}, and (iii) is satisfied by the continuity of $\vecf{M}(\cdot,\tau)$ (as shown above). 

Consistency of $\hat{\vecf{\beta}}_{\mathrm{MM}}$ follows by Theorem 5.9 in \citet{vanderVaart98} or Theorem 2.1 in \citet{NeweyMcFadden94}.

\paragraph{GMM Consistency.} To prove the consistency of $\hat{\vecf{\beta}}_{\mathrm{GMM}}$, we show that the two conditions of Theorem 5.7 in \citet{vanderVaart98} are satisfied. 
The first condition of Theorem 5.7 in \citet{vanderVaart98} requires 
\begin{equation}\label{eqn:Thm5.7-cond1}
\sup_{\beta \in \mathcal{B}} 
\abs{ \hat{\vecf{M}}_n\left(\vecf{\beta},\tau\right)\tr 
\hat{\matf{W} }   \hat{\vecf{M}}_n\left(\vecf{\beta},\tau\right) -\vecf{M}\left(\vecf{\beta},\tau\right)\tr  
\matf{W}
\vecf{M}\left(\vecf{\beta},\tau\right) }
\pconv 0.
\end{equation}
From \cref{eqn:Mn-pt-Thm59-cond1}, 
$\sup_{\vecf{\beta}\in\mathcal{B}}
 \norm{ \hat{\vecf{M}}_n(\vecf{\beta},\tau) - 
\vecf{M}(\vecf{\beta},\tau)
}  = o_p(1)$. 
From \cref{a:W}, $ \hat{\matf{W}} = \matf{W} + o_p(1) $, which does not depend on $\vecf{\beta}$. 

Let $\norm{\cdot}$ denote the Frobenius matrix norm 
$\norm{\matf{A}}=\norm{\matf{A}\tr}=\sqrt{\textrm{tr}(\matf{A}\matf{A}\tr)}$, 
which is the Euclidean norm if $\matf{A}$ is a vector. 
Given this norm, the Cauchy--Schwarz inequality states that for any matrices $\matf{A}$ and $\matf{B}$,  $\norm{\matf{A}\matf{B} } \le \norm{\matf{A}} \norm{\matf{B}} $. 

We now use the triangle inequality, Cauchy--Schwarz inequality, uniform convergence in probability of $\hat{\vecf{M}}_n(\vecf{\beta},\tau)$, and convergence in probability of $\hat{\matf{W}}$, to show the required condition in \cref{eqn:Thm5.7-cond1}: 
\begin{align*}
& \sup_{\beta \in \mathcal{B}} 
\bigl\lvert
 \hat{\vecf{M}}_n(\vecf{\beta},\tau)\tr  \hat{\matf{W} }   
      \hat{\vecf{M}}_n(\vecf{\beta},\tau) 
     -\vecf{M}(\vecf{\beta},\tau)\tr  \matf{W}   
      \vecf{M}(\vecf{\beta},\tau) 
\bigr\rvert 
\\&= \sup_{\beta \in \mathcal{B}} 
\bigl\lvert
(  \vecf{M}(\vecf{\beta},\tau)
 +(\hat{\vecf{M}}(\vecf{\beta},\tau)-\vecf{M}(\vecf{\beta},\tau)  ))\tr 
(\matf{W} + (\hat{\matf{W}}-\matf{W})) 
\\&\qquad\quad\times
(  \vecf{M}(\vecf{\beta},\tau)
 +(\hat{\vecf{M}}(\vecf{\beta},\tau) - \vecf{M}(\vecf{\beta},\tau)  )) 
\\&\qquad\quad
-\vecf{M}(\vecf{\beta},\tau)\tr  \matf{W}  \vecf{M}(\vecf{\beta},\tau) 
\bigr\rvert 
\\&= \sup_{\beta \in \mathcal{B}}  
\bigl\lvert
   \vecf{M}(\vecf{\beta},\tau)\tr 
   (\hat{\matf{W}}-\matf{W})    
   \vecf{M}(\vecf{\beta},\tau)
+
  \vecf{M}(\vecf{\beta},\tau)\tr 
  \matf{W} 
  (\hat{\vecf{M}}(\vecf{\beta},\tau)-\vecf{M}(\vecf{\beta},\tau)  )
\\&\quad 
+ 
  (\hat{\vecf{M}}(\vecf{\beta},\tau)-\vecf{M}(\vecf{\beta},\tau) )\tr
  \matf{W}
  \vecf{M}(\vecf{\beta},\tau) 
+ 
  \vecf{M}(\vecf{\beta},\tau)\tr 
  (\hat{\matf{W}}-\matf{W}) 
  (\hat{\vecf{M}}(\vecf{\beta},\tau)-\vecf{M}(\vecf{\beta},\tau) )
\\&\quad + 
  (\hat{\vecf{M}}(\vecf{\beta},\tau)-\vecf{M}(\vecf{\beta},\tau) )\tr
  \matf{W} 
  (\hat{\vecf{M}}(\vecf{\beta},\tau)-\vecf{M}(\vecf{\beta},\tau) )
\\&\quad + 
   (\hat{\vecf{M}}(\vecf{\beta},\tau)-\vecf{M}(\vecf{\beta},\tau) )\tr
   (\hat{\matf{W}}-\matf{W})    
   \vecf{M}(\vecf{\beta},\tau)
\\
&\quad + 
   (\hat{\vecf{M}}(\vecf{\beta},\tau)-\vecf{M}(\vecf{\beta},\tau) )\tr
   (\hat{\matf{W}}-\matf{W})    
   (\hat{\vecf{M}}(\vecf{\beta},\tau)-\vecf{M}(\vecf{\beta},\tau) )
\bigr\rvert \\
&\le \sup_{\beta \in \mathcal{B}}  
\abs{ 
   \vecf{M}(\vecf{\beta},\tau)\tr 
   (\hat{\matf{W}}-\matf{W})    
   \vecf{M}(\vecf{\beta},\tau)
} 
+ \sup_{\beta \in \mathcal{B}} 
\abs{
  \vecf{M}(\vecf{\beta},\tau)\tr 
  \matf{W} 
  (\hat{\vecf{M}}(\vecf{\beta},\tau)-\vecf{M}(\vecf{\beta},\tau)  )
} \\
&\quad + \sup_{\beta \in \mathcal{B}} 
\abs{
  (\hat{\vecf{M}}(\vecf{\beta},\tau)-\vecf{M}(\vecf{\beta},\tau) )\tr
  \matf{W}
  \vecf{M}(\vecf{\beta},\tau) 
} 
+ \sup_{\beta \in \mathcal{B}} 
\abs{ 
  \vecf{M}(\vecf{\beta},\tau)\tr 
  (\hat{\matf{W}}-\matf{W}) 
  (\hat{\vecf{M}}(\vecf{\beta},\tau)-\vecf{M}(\vecf{\beta},\tau) )
}\\
&\quad + \sup_{\beta \in \mathcal{B}} 
\abs{
  (\hat{\vecf{M}}(\vecf{\beta},\tau)-\vecf{M}(\vecf{\beta},\tau) )\tr
  \matf{W} 
  (\hat{\vecf{M}}(\vecf{\beta},\tau)-\vecf{M}(\vecf{\beta},\tau) )
}\\
&\quad + \sup_{\beta \in \mathcal{B}} 
\abs{
   (\hat{\vecf{M}}(\vecf{\beta},\tau)-\vecf{M}(\vecf{\beta},\tau) )\tr
   (\hat{\matf{W}}-\matf{W})    
   \vecf{M}(\vecf{\beta},\tau)
}\\
&\quad + \sup_{\beta \in \mathcal{B}} 
\abs{
   (\hat{\vecf{M}}(\vecf{\beta},\tau)-\vecf{M}(\vecf{\beta},\tau) )\tr
   (\hat{\matf{W}}-\matf{W})    
   (\hat{\vecf{M}}(\vecf{\beta},\tau)-\vecf{M}(\vecf{\beta},\tau) )
   }\\
&= \sup_{\beta \in \mathcal{B}}  
\norm{ 
   \vecf{M}(\vecf{\beta},\tau)\tr 
   (\hat{\matf{W}}-\matf{W})    
   \vecf{M}(\vecf{\beta},\tau)
} 
+ \sup_{\beta \in \mathcal{B}} 
\norm{
  \vecf{M}(\vecf{\beta},\tau)\tr 
  \matf{W} 
  (\hat{\vecf{M}}(\vecf{\beta},\tau)-\vecf{M}(\vecf{\beta},\tau)  )
} \\
&\quad + \sup_{\beta \in \mathcal{B}} 
\norm{
  (\hat{\vecf{M}}(\vecf{\beta},\tau)-\vecf{M}(\vecf{\beta},\tau) )\tr
  \matf{W}
  \vecf{M}(\vecf{\beta},\tau) 
} 
\\&\quad + \sup_{\beta \in \mathcal{B}} 
\norm{ 
  \vecf{M}(\vecf{\beta},\tau)\tr 
  (\hat{\matf{W}}-\matf{W}) 
  (\hat{\vecf{M}}(\vecf{\beta},\tau)-\vecf{M}(\vecf{\beta},\tau) )
}\\
&\quad + \sup_{\beta \in \mathcal{B}} 
\norm{
  (\hat{\vecf{M}}(\vecf{\beta},\tau)-\vecf{M}(\vecf{\beta},\tau) )\tr
  \matf{W} 
  (\hat{\vecf{M}}(\vecf{\beta},\tau)-\vecf{M}(\vecf{\beta},\tau) )
}\\
&\quad + \sup_{\beta \in \mathcal{B}} 
\norm{
   (\hat{\vecf{M}}(\vecf{\beta},\tau)-\vecf{M}(\vecf{\beta},\tau) )\tr
   (\hat{\matf{W}}-\matf{W})    
   \vecf{M}(\vecf{\beta},\tau)
}\\
&\quad + \sup_{\beta \in \mathcal{B}} 
\norm{
   (\hat{\vecf{M}}(\vecf{\beta},\tau)-\vecf{M}(\vecf{\beta},\tau) )\tr
   (\hat{\matf{W}}-\matf{W})    
   (\hat{\vecf{M}}(\vecf{\beta},\tau)-\vecf{M}(\vecf{\beta},\tau) )
   }\\
&\le \sup_{\beta \in \mathcal{B}}  
\overbrace{
        \norm{   \vecf{M}(\vecf{\beta},\tau)\tr }
\norm{
   (\hat{\matf{W}}-\matf{W})  }
\norm{
   \vecf{M}(\vecf{\beta},\tau)
} }^{\textrm{by Cauchy--Schwarz inequality}}
+ \sup_{\beta \in \mathcal{B}} 
\norm{
  \vecf{M}(\vecf{\beta},\tau)\tr }
\norm{ \matf{W} }
\norm{ (\hat{\vecf{M}}(\vecf{\beta},\tau)-\vecf{M}(\vecf{\beta},\tau)  )
} \\
&\quad + \sup_{\beta \in \mathcal{B}} 
\norm{
  (\hat{\vecf{M}}(\vecf{\beta},\tau)-\vecf{M}(\vecf{\beta},\tau) )\tr }
\norm{  \matf{W} }
\norm{  \vecf{M}(\vecf{\beta},\tau) 
} \\
&\quad + \sup_{\beta \in \mathcal{B}} 
\norm{ 
  \vecf{M}(\vecf{\beta},\tau)\tr }
\norm{  (\hat{\matf{W}}-\matf{W}) }
\norm{  (\hat{\vecf{M}}(\vecf{\beta},\tau)-\vecf{M}(\vecf{\beta},\tau) )
}\\
&\quad + \sup_{\beta \in \mathcal{B}} 
\norm{
  (\hat{\vecf{M}}(\vecf{\beta},\tau)-\vecf{M}(\vecf{\beta},\tau) )\tr}
\norm{  \matf{W} }
\norm{  (\hat{\vecf{M}}(\vecf{\beta},\tau)-\vecf{M}(\vecf{\beta},\tau) )
}\\
&\quad + \sup_{\beta \in \mathcal{B}} 
\norm{
   (\hat{\vecf{M}}(\vecf{\beta},\tau)-\vecf{M}(\vecf{\beta},\tau) )\tr}
\norm{   (\hat{\matf{W}}-\matf{W})    }
\norm{  \vecf{M}(\vecf{\beta},\tau)
}\\
&\quad + \sup_{\beta \in \mathcal{B}} 
\norm{
   (\hat{\vecf{M}}(\vecf{\beta},\tau)-\vecf{M}(\vecf{\beta},\tau) )\tr}
\norm{   (\hat{\matf{W}}-\matf{W})  }  
\norm{   (\hat{\vecf{M}}(\vecf{\beta},\tau)-\vecf{M}(\vecf{\beta},\tau) )
   }\\
&\le \sup_{\beta \in \mathcal{B}}  
 \norm{ \vecf{M}(\vecf{\beta},\tau)\tr }
   \norm{ (\hat{\matf{W}}-\matf{W})  }
\overbrace{\sup_{\beta \in \mathcal{B}} 
   \norm{ \vecf{M}(\vecf{\beta},\tau)
} }^{=O(1)}
\\&\quad + \sup_{\beta \in \mathcal{B}} 
    \norm{ \vecf{M}(\vecf{\beta},\tau)\tr }
  \norm{ \matf{W} }
\sup_{\beta \in \mathcal{B}} 
  \norm{ (\hat{\vecf{M}}(\vecf{\beta},\tau)-\vecf{M}(\vecf{\beta},\tau)  )
} 
\\&\quad + \sup_{\beta \in \mathcal{B}} 
\norm{
  (\hat{\vecf{M}}(\vecf{\beta},\tau)-\vecf{M}(\vecf{\beta},\tau) )\tr }
 \overbrace{ \norm{  \matf{W} } }^{=O(1)}
\sup_{\beta \in \mathcal{B}}
  \norm{  \vecf{M}(\vecf{\beta},\tau) 
} \\
&\quad + \sup_{\beta \in \mathcal{B}} 
\norm{ 
  \vecf{M}(\vecf{\beta},\tau)\tr }
   \norm{  (\hat{\matf{W}}-\matf{W}) }
\sup_{\beta \in \mathcal{B}} 
   \norm{  (\hat{\vecf{M}}(\vecf{\beta},\tau)-\vecf{M}(\vecf{\beta},\tau) )
}\\
&\quad + \sup_{\beta \in \mathcal{B}} 
\norm{
  (\hat{\vecf{M}}(\vecf{\beta},\tau)-\vecf{M}(\vecf{\beta},\tau) )\tr}
   \norm{  \matf{W} }
\sup_{\beta \in \mathcal{B}} 
   \norm{  (\hat{\vecf{M}}(\vecf{\beta},\tau)-\vecf{M}(\vecf{\beta},\tau) )
}\\
&\quad + \sup_{\beta \in \mathcal{B}} 
\norm{
   (\hat{\vecf{M}}(\vecf{\beta},\tau)-\vecf{M}(\vecf{\beta},\tau) )\tr}
\norm{   (\hat{\matf{W}}-\matf{W})    }
\sup_{\beta \in \mathcal{B}} 
   \norm{  \vecf{M}(\vecf{\beta},\tau)
}\\
&\quad + \sup_{\beta \in \mathcal{B}} 
\norm{
   (\hat{\vecf{M}}(\vecf{\beta},\tau)-\vecf{M}(\vecf{\beta},\tau) )\tr}
\norm{   (\hat{\matf{W}}-\matf{W})  }  
\sup_{\beta \in \mathcal{B}} 
  \norm{   (\hat{\vecf{M}}(\vecf{\beta},\tau)-\vecf{M}(\vecf{\beta},\tau) )
   }\\
&=o_p(1)+o_p(1)+o_p(1)+o_p(1)+o_p(1)+o_p(1)+o_p(1)
 =o_p(1).
\end{align*} 
Above, we know $\norm{\matf{W}}=O(1)$ since $\matf{W}$ is fixed, 
and we know $\sup_{\vecf{\beta} \in \mathcal{B}} 
\norm{\vecf{M}(\vecf{\beta},\tau) } = O(1)$ 
since $\vecf{M}(\vecf{\beta},\tau) $ is continuous in $\vecf{\beta}$ and $ \mathcal{B}$ is a compact set.

The second condition of Theorem 5.7 in \citet{vanderVaart98} is that $\vecf{\beta}_{0\tau} $ satisfies the well-separated minimum property. 
Since $\vecf{M}(\vecf{\beta},\tau)\tr  \matf{W}    \vecf{M}(\vecf{\beta},\tau) $ is continuous in $\vecf{\beta}$ and 
$\{ \vecf{\beta} : \norm{\vecf{\beta}-\vecf{\beta}_{0\tau}}\ge\epsilon, \vecf{\beta}\in\mathcal{B} \}$ 
is a compact set, let $\vecf{\beta}^{*}$ denote the minimizer: for any $\epsilon>0 $,
\begin{equation}\label{eqn:Mn-pt-Thm57-cond2}
\inf_{\vecf{\beta}:\norm{\vecf{\beta}-\vecf{\beta}_{0\tau}}\ge\epsilon}
\vecf{M}(\vecf{\beta},\tau)\tr  \matf{W}    \vecf{M}(\vecf{\beta},\tau)
= \vecf{M}(\vecf{\beta}^{*},\tau)\tr  \matf{W }   \vecf{M}(\vecf{\beta}^{*},\tau) . 
\end{equation}
By \cref{a:B}, $\vecf{M}(\vecf{\beta},\tau)\ne\vecf{0}$ for any $\vecf{\beta}\ne\vecf{\beta}_{0\tau}$, 
so $\vecf{M}(\vecf{\beta}^*,\tau)\ne\vecf{0}$. 
Since $\matf{W}$ is positive definite (\cref{a:W}), 
\begin{equation*} 
\vecf{M}(\vecf{\beta}^{*},\tau)\tr \matf{ W  }  \vecf{M}(\vecf{\beta}^{*},\tau) 
> 0.
\end{equation*}
Thus, for any $\epsilon>0$, 
\begin{equation}\label{eqn:Thm5.7-cond2-GMM}
\inf_{\vecf{\beta}:\norm{\vecf{\beta}-\vecf{\beta}_{0\tau}}\ge\epsilon}
\vecf{M}(\vecf{\beta},\tau)\tr  \matf{W }   \vecf{M}(\vecf{\beta},\tau)
> 0
= \vecf{M}(\vecf{\beta}_{0 \tau},\tau)\tr \matf{ W}    \vecf{M}(\vecf{\beta}_{0\tau},\tau) . 
\end{equation} 
Consistency of $\hat{\vecf{\beta}}_{\mathrm{GMM}}$ follows by Theorem 5.7 in \citet{vanderVaart98}.
\end{proof}

\begin{proof}[Proof of \Cref{lem:Mn0-normality}]
Decomposing into a mean-zero term and a ``bias'' term, 
\begin{align*}
\sqrt{n}\hat{\vecf{M}}_n(\vecf{\beta}_{0\tau},\tau)
= \overbrace{\sqrt{n} \bigl\{ \hat{\vecf{M}}_n(\vecf{\beta}_{0\tau},\tau) 
                  -\E[ \hat{\vecf{M}}_n(\vecf{\beta}_{0\tau},\tau) ] \bigr\}
            }^{\dconv \Normalp{\vecf{0}}{\matf{\Sigma}_{\tau}}\textrm{ by \cref{a:CLT}}}
+ \overbrace{\sqrt{n} \E[ \hat{\vecf{M}}_n(\vecf{\beta}_{0\tau},\tau) ]}^{\textrm{want to show }o_p(1)} . 
\end{align*}
With iid data, \citet[Thm.\ 1]{KaplanSun17} show $\matf{\Sigma}_{\tau}=\tau(1-\tau) \E( \vecf{Z}_i\vecf{Z}_i\tr  )$. 
The remainder of the proof shows that the second term is indeed $o_p(1)$, actually $o(1)$. 

Let $\Lambda_i\equiv \Lambda(\vecf{Y}_i,\vecf{X}_i,\vecf{\beta}_{0\tau})$, with marginal PDF $f_\Lambda(\cdot)$ and conditional PDF $f_{\Lambda|\vecf{Z}}(\cdot\mid \vecf{z})$ given $\vecf{Z}_i=\vecf{z}$. 
Given strict stationarity of the data, using the definitions in \cref{eqn:def-M-hat}, assuming the support of $\Lambda_i$ given $\vecf{Z}_i=\vecf{z}$ is the interval $[\Lambda_L(\vecf{z}),\Lambda_H(\vecf{z})]$ with $\Lambda_L(\vecf{z})\le-h_n\le h_n\le \Lambda_H(\vecf{z})$, 
\begin{align*}
& \E[ \hat{\vecf{M}}_n(\vecf{\beta}_{0\tau},\tau) ]
= \E\left[ \frac{1}{n} \sum_{i=1}^{n} \vecf{g}_n(\vecf{Y}_i,\vecf{X}_i,\vecf{Z}_i,\vecf{\beta}_{0\tau},\tau) \right]
= \E\left[ \vecf{g}_n(\vecf{Y}_i,\vecf{X}_i,\vecf{Z}_i,\vecf{\beta}_{0\tau},\tau) \right] 
\\&= \E\left\{ \vecf{Z}_i [ \tilde{I}(-\Lambda_i/h_n) - \tau ] \right\} 
\\&= \E\left\{ \vecf{Z}_i \E[ \tilde{I}(-\Lambda_i/h_n) - \tau \mid \vecf{Z}_i ] \right\} 
\\&= \E\biggl\{ \vecf{Z}_i \overbrace{
               \int_{\Lambda_L(\vecf{Z}_i)}^{\Lambda_H(\vecf{Z}_i)} [\tilde{I}(-L/h_n) - \tau]
               \,dF_{\Lambda|Z}(L\mid\vecf{Z}_i)}^{\textrm{integrate by parts}} \biggr\} 
\\&= \E\biggl\{ \vecf{Z}_i \biggl[
       \overbrace{\left. \left( \tilde{I}(-L/h_n)-\tau \right) F_{\Lambda|Z}(L\mid \vecf{Z}_i) 
       \right\rvert_{\Lambda_L(\vecf{Z}_i)}^{\Lambda_H(\vecf{Z}_i)}}^{=-\tau\textrm{: use \cref{a:Itilde} and $\Lambda_H(\vecf{Z}_i)\ge h_n$}}
\\&\qquad\qquad\qquad
        -\int_{\Lambda_L(\vecf{Z}_i)}^{\Lambda_H(\vecf{Z}_i)}
         F_{\Lambda|Z}(L\mid \vecf{Z}_i) \overbrace{\tilde{I}'(-L/h_n)}^{=0\textrm{ for }L\not\in[-h_n,h_n]} (-h_n^{-1} )
         \,dL 
        \biggr] \biggr\} 
\\&= \E\biggl\{ \vecf{Z}_i \biggl[ -\tau
       +\overbrace{h_n^{-1} \int_{-h_n}^{h_n}
         F_{\Lambda|Z}(L\mid \vecf{Z}_i) \tilde{I}'(-L/h_n) 
         \,dL }^{\textrm{change of variables to }v=-L/h_n} %rem: one negative sign from dv=-dL/h, but another negative sign from switching integration limits from [1,-1] to [-1,1], so they cancel and it remains positive.
        \biggr] \biggr\} 
\\&= \E\left\{ \vecf{Z}_i \left[ -\tau
       +\int_{-1}^{1} F_{\Lambda|Z}(-h_n v\mid \vecf{Z}_i) \tilde{I}'(v) 
         \,dv \right] \right\} 
\\&= \E\left\{ \vecf{Z}_i \left[ -\tau
       +\int_{-1}^{1} \left( \sum_{k=0}^{r}F_{\Lambda|Z}^{(k)}(0\mid \vecf{Z}_i)\frac{(-h_n)^k v^k}{k!} \right) \tilde{I}'(v) 
         \,dv \right] \right\} 
\\&\quad+ \E\biggl\{ \vecf{Z}_i 
       \int_{-1}^{1} 
        \overbrace{f_{\Lambda|Z}^{(r)}(-\tilde{h}v \mid \vecf{Z}_i)}^{\tilde{h}\in[0,h_n]\textrm{ (from MVT)}} 
        \frac{(-h_n)^{r+1} v^{r+1}}{(r+1)!} \tilde{I}'(v) 
         \,dv \biggr\} 
\\&= \E\biggl\{ \vecf{Z}_i \biggl[ -\tau
       +\sum_{k=0}^{r} F_{\Lambda|Z}^{(k)}(0\mid \vecf{Z}_i) \frac{(-h_n)^k}{k!}
         \overbrace{\int_{-1}^{1} v^k \tilde{I}'(v) \,dv }^{=0\textrm{ for $1\le k\le r-1$ by \cref{a:Itilde}}}
     \biggr] \biggr\} 
\\&\quad+ O(h_n^{r+1}) \overbrace{\E\biggl\{ \vecf{Z}_i 
        \int_{-1}^{1} 
        \overbrace{f_{\Lambda|Z}^{(r)}(-\tilde{h}v \mid \vecf{Z}_i)}^{\textrm{bounded by \cref{a:U}}}
        v^{r+1} \tilde{I}'(v) 
         \,dv \biggr\} }^{O(1)\textrm{ by \cref{a:U,a:Itilde}}}
\\&= \E\left\{ \vecf{Z}_i \left[ -\tau +F_{\Lambda|Z}(0\mid \vecf{Z}_i)
       +f_{\Lambda|Z}^{(r-1)}(0\mid \vecf{Z}_i) \frac{(-h_n)^r}{r!}
         \int_{-1}^{1} v^r \tilde{I}'(v) \,dv 
     \right] \right\} 
+ O(h_n^{r+1})
\\&= \E\left\{ \vecf{Z}_i \left[ -\tau +\E( \Ind{\Lambda_i\le0} \mid \vecf{Z}_i ) 
     \right] \right\} 
+ \overbrace{\frac{(-h_n)^r}{r!}}^{\textrm{$r$ is even}} \left[ \int_{-1}^{1} v^r \tilde{I}'(v) \,dv \right] 
  \E\left[ \vecf{Z}_i f_{\Lambda|Z}^{(r-1)}(0\mid \vecf{Z}_i) \right] 
+ O(h_n^{r+1})
\\&= \overbrace{\E\left\{ \E\left[ \vecf{Z}_i \left( \Ind{\Lambda_i\le0} -\tau \right) \mid \vecf{Z}_i  
     \right] \right\} }^{=\E\left[ \vecf{Z}_i \left( \Ind{\Lambda_i\le0} -\tau \right) \right]=0\textrm{ by \cref{a:B}}}
+ \frac{h_n^r}{r!} \left[ \int_{-1}^{1} v^r \tilde{I}'(v) \,dv \right] 
  \E\left[ \vecf{Z}_i f_{\Lambda|Z}^{(r-1)}(0\mid \vecf{Z}_i) \right] 
+ O(h^{r+1})
\\&= \frac{h_n^r}{r!} \left[ \int_{-1}^{1} v^r \tilde{I}'(v) \,dv \right] 
  \E\left[ \vecf{Z}_i f_{\Lambda|Z}^{(r-1)}(0\mid \vecf{Z}_i) \right] 
+ O(h_n^{r+1})
= O(h_n^r)
. 
\end{align*}
Thus, the result follows if $\sqrt{n}h_n^r=o(1)$, i.e., $h_n=o(n^{-1/(2r)})$ as in \cref{a:h}. \end{proof}

\vspace{0.5cm}

\begin{proof}[Proof of \Cref{thm:normality}]
We first establish the asymptotic normality of the smoothed MM estimator. 
We then prove the asymptotic normality of the smoothed GMM estimator. 

\paragraph{MM asymptotic normality.}
Recall from 
\cref{eqn:def-est-MM} that 
$\vecf{0} =
\hat{\vecf{M}}_n(\hat{\vecf{\beta}}_{\mathrm{MM}},\tau) $. 
Define 
\begin{equation}\label{eqn:nabla-Mn}
\vecf{\nabla}_{\vecf{\beta}\tr } \hat{\vecf{M}}_n(\vecf{\beta}_{0\tau},\tau) 
\equiv
\left. \pD{}{\vecf{\beta}\tr } \hat{\vecf{M}}_n(\vecf{\beta},\tau)
\right\rvert_{\vecf{\beta}=\vecf{\beta}_{0\tau}} . 
\end{equation}
Let $\hat{\vecf{M}}_n^{(k)}(\vecf{\beta},\tau)$ refer to the $k$th element in the vector $\hat{\vecf{M}}_n(\vecf{\beta},\tau)$, so $\vecf{\nabla}_{\vecf{\beta}\tr } \hat{\vecf{M}}_n^{(k)}(\vecf{\beta}_{0\tau},\tau)$ is a row vector and $\vecf{\nabla}_{\vecf{\beta}} \hat{\vecf{M}}_n^{(k)}(\vecf{\beta}_{0\tau},\tau)$ is a column vector. 
Define 
\begin{equation}\label{eqn:def-M-dot}
\dot{\matf{M}}_n(\tau)
\equiv \left( 
  \vecf{\nabla}_{\vecf{\beta}} \hat{\vecf{M}}_n^{(1)}(\tilde{\vecf{\beta}}_{(1)},\tau), 
  \ldots, 
  \vecf{\nabla}_{\vecf{\beta}} \hat{\vecf{M}}_n^{(d_\beta)}(\tilde{\vecf{\beta}}_{(d_\beta)},\tau)
\right) \tr  , 
\end{equation}
a $d_\beta\times d_\beta$ matrix with its first row equal to that of $\vecf{\nabla}_{\vecf{\beta}\tr } \hat{\vecf{M}}_n(\tilde{\vecf{\beta}}_{(1)},\tau)$, its second row equal to that of $\vecf{\nabla}_{\vecf{\beta}\tr } \hat{\vecf{M}}_n(\tilde{\vecf{\beta}}_{(2)},\tau)$, etc., where each vector $\tilde{\vecf{\beta}}_{(k)}$ lies on the line segment between $\vecf{\beta}_{0\tau}$ and $\hat{\vecf{\beta}}_{\mathrm{MM}}$. 
Due to smoothing, we can take a derivative (for any $n$) to obtain a mean value expansion, and then rearrange: 
\begin{gather}\label{eqn:Mn-expansion}
\vecf{0} = 
\hat{\vecf{M}}_n(\vecf{\beta}_{0\tau},\tau) 
+ \dot{\matf{M}}_n(\tau) ( \hat{\vecf{\beta}}_{\mathrm{MM}} - \vecf{\beta}_{0\tau} ) 
,\\\label{eqn:beta-hat-linear-form1}
\sqrt{n} ( \hat{\vecf{\beta}}_{\mathrm{MM}} - \vecf{\beta}_{0\tau} )
= -[ \dot{\matf{M}}_n(\tau) ]^{-1}
   \sqrt{n} \hat{\vecf{M}}_n(\vecf{\beta}_{0\tau},\tau) 
. 
\end{gather}
From \cref{a:G-est}, after plugging in definitions, $\dot{\matf{M}}_n(\tau) \pconv \matf{G}$; applying the continuous mapping theorem, 
$-[ \dot{\matf{M}}_n(\tau) ]^{-1} \pconv  -\matf{G}^{-1}$. 
Using \cref{a:CLT}, the rest of the right-hand side of \cref{eqn:beta-hat-linear-form1} has an asymptotic normal distribution. 
\Cref{eqn:beta-hat-linear-form1} also implies the asymptotically linear (influence function) representation 
\begin{equation}\label{eqn:asy-linear}
\sqrt{n} ( \hat{\vecf{\beta}}_{\mathrm{MM}} - \vecf{\beta}_{0\tau} )
= \frac{1}{\sqrt{n}} \sum_{i=1}^{n} 
[-\dot{\matf{M}}_n(\tau) ]^{-1}
   \vecf{g}_{ni}(\vecf{\beta}_{0\tau},\tau) 
= -\frac{1}{\sqrt{n}} \sum_{i=1}^{n} 
   \matf{G}^{-1}
   \vecf{g}_{ni}(\vecf{\beta}_{0\tau},\tau) 
  +o_p(1) . 
\end{equation}

Next, apply the continuous mapping theorem (CMT), using the nonsingularity of $\matf{G}$ assumed in \cref{a:U} and the result $\dot{\matf{M}}_n(\tau)\pconv \matf{G}$ in \cref{a:G-est} to obtain 
$[ \dot{\matf{M}}_n(\tau) ]^{-1} \pconv  \matf{G}^{-1}$. 
Using the CMT again, combine this with the results in \cref{eqn:beta-hat-linear-form1} and \cref{lem:Mn0-normality}: 
\begin{align*}
\sqrt{n} ( \hat{\vecf{\beta}}_{\mathrm{MM}} - \vecf{\beta}_{0\tau} )
&= -\overbrace{[ \dot{\matf{M}}_n(\tau) ]^{-1}}^{\textrm{use \cref{a:G-est} and CMT}}
    \overbrace{\sqrt{n} \hat{\vecf{M}}_n(\vecf{\beta}_{0\tau},\tau)}^{\textrm{use \cref{lem:Mn0-normality}}} 
\\&\dconv  -\matf{G}^{-1}
    \Normalp{\vecf{0}}{\matf{\Sigma}_{\tau}} 
  \stackrel{d}{=} \Normalp{\vecf{0}}{\matf{G}^{-1} \matf{\Sigma}_{\tau} [\matf{G}\tr ]^{-1}} 
. 
% \qedhere
\end{align*}

\paragraph{GMM asymptotic normality.} For GMM, the approach is similar, but starting from the first-order condition for the mean value expansion. 
From the definition of $\hat{\vecf{\beta}}_{\mathrm{GMM}}$ in \cref{eqn:def-est-GMM}, we have the first-order condition 
\begin{equation}\label{eqn:FOC}
\left[ \vecf{\nabla}_{\vecf{\beta}\tr} \hat{\vecf{M}}_n(\hat{\vecf{\beta}}_{\mathrm{GMM}},\tau) \right]\tr \hat{\matf{W} }   \hat{\vecf{M}}_n(\hat{\vecf{\beta}}_{\mathrm{GMM}},\tau)
= \vecf{0} . 
\end{equation}
We reuse the notation from \cref{eqn:nabla-Mn,eqn:def-M-dot}, but now $\tilde{\vecf{\beta}}_{(k)}$ lies between $\vecf{\beta}_{0\tau}$ and $\hat{\vecf{\beta}}_{\mathrm{GMM}}$. 
By the mean value theorem, 
\begin{equation}\label{eqn:MVT}
\hat{\vecf{M}}_n(\hat{\vecf{\beta}}_{\mathrm{GMM}},\tau)=
\hat{\vecf{M}}_n(\vecf{\beta}_{0\tau},\tau)
+\dot{\matf{M}}_n(\tau) ( \hat{\vecf{\beta}}_{\mathrm{GMM}} - \vecf{\beta}_{0\tau} ).
\end{equation}
Pre-multiplying \cref{eqn:MVT} by $[ \vecf{\nabla}_{\vecf{\beta}\tr} \hat{\vecf{M}}_n(\hat{\vecf{\beta}}_{\mathrm{GMM}},\tau) ]\tr \hat{\matf{W} }$ and using \cref{eqn:FOC} for the first equality, 
\begin{align*}
\vecf{0} 
&=[ \vecf{\nabla}_{\vecf{\beta}\tr} \hat{\vecf{M}}_n(\hat{\vecf{\beta}}_{\mathrm{GMM}},\tau) ]\tr \hat{\matf{W} }   \hat{\vecf{M}}_n(\hat{\vecf{\beta}}_{\mathrm{GMM}},\tau)\\
&= [ \vecf{\nabla}_{\vecf{\beta}\tr} \hat{\vecf{M}}_n(\hat{\vecf{\beta}}_{\mathrm{GMM}},\tau) ]\tr \hat{\matf{W} }
\hat{\vecf{M}}_n(\vecf{\beta}_{0\tau},\tau)
+[ \vecf{\nabla}_{\vecf{\beta}\tr} \hat{\vecf{M}}_n(\hat{\vecf{\beta}}_{\mathrm{GMM}},\tau) ]\tr \hat{\matf{W} }
\dot{\matf{M}}_n(\tau) ( \hat{\vecf{\beta}}_{\mathrm{GMM}} - \vecf{\beta}_{0\tau} ).
\end{align*}
Multiplying by $\sqrt{n}$ and rearranging, 
\begin{align}\notag
& \sqrt{n} ( \hat{\vecf{\beta}}_{\mathrm{GMM}} - \vecf{\beta}_{0\tau} )
\\[-12pt]&= -\bigl\{ [ \vecf{\nabla}_{\vecf{\beta}\tr} \hat{\vecf{M}}_n(\hat{\vecf{\beta}}_{\mathrm{GMM}},\tau) ]\tr \hat{\matf{W} }
\dot{\matf{M}}_n(\tau) \bigr\}^{-1}
\left[ \vecf{\nabla}_{\vecf{\beta}\tr} \hat{\vecf{M}}_n(\hat{\vecf{\beta}}_{\mathrm{GMM}},\tau) \right]\tr \hat{\matf{W} }
\overbrace{\sqrt{n} \hat{\vecf{M}}_n(\vecf{\beta}_{0\tau},\tau)}^{=O_p(1)}
\label{eqn:est-GMM-asy-linear-almost}
\\&= -\{ \matf{G}\tr \matf{W} \matf{G} \}^{-1}
\matf{G}\tr \matf{W}
\sqrt{n} \hat{\vecf{M}}_n(\vecf{\beta}_{0\tau},\tau)
+o_p(1) ,
\label{eqn:est-GMM-asy-linear}
\end{align}
where $\hat{\matf{W}} = \matf{W} +o_p(1)$ by \cref{a:W}, 
and $\dot{\matf{M}}_n(\tau) = \matf{G} +o_p(1)$ and 
$\vecf{\nabla}_{\vecf{\beta}\tr} \hat{\vecf{M}}_n(\hat{\vecf{\beta}}_{\mathrm{GMM}},\tau) 
= \matf{G} +o_p(1)$ 
by \cref{a:G-est}. 
From \cref{lem:Mn0-normality}, 
% Lemma 2.4 in \citet{deCastroGalvaoKaplan17}, 
$\sqrt{n}\hat{\vecf{M}}_n(\vecf{\beta}_{0\tau},\tau)
\dconv 
\Normalp{\vecf{0}}{\matf{\Sigma}_{\tau}}$. 
Applying the continuous mapping theorem yields 
the stated result. 
\end{proof}

\section{Primitive conditions for high-level assumptions}
\label{sec:app-primitive}

The following subsections discuss primitive conditions for the high-level \Cref{a:ULLN,a:G-est,a:CLT}.

\subsection{\texorpdfstring{\Cref{a:ULLN}}{Assumption \ref{a:ULLN}}}\label{sec:app-primitive-ULLN}

\Cref{a:ULLN} is a high-level ULLN-type assumption. 
Intuitively, it holds under weak enough dependence and a moment restriction on $\vecf{Z}_i$. 
Howver, it is not trivial since most ULLNs assume a constant function $\vecf{g}(\cdot)$ instead of a function indexed by $n$. 
We provide an example of sufficient lower-level assumptions in \cref{lem:ULLN}.

\begin{lemma}\label{lem:ULLN}
% NEED: {a:B} for compactness (Andrews A1);  {a:Z} is necessary for (iii) below even w/ iid;  Itilde/Lambda/Y for Andrews A3
Let \Cref{a:XYZ,a:B,a:Z,a:Itilde,a:Lambda,a:Y} hold. 
Additionally, assume the following. 
(i) $\bigl(\mathcal{B},d(\cdot)\bigr)$ is a metric space. 
(ii) Defining open balls $B(\vecf{\beta},\rho)\equiv\{\tilde{\vecf{\beta}}\in\mathcal{B}:d(\vecf{\beta},\tilde{\vecf{\beta}})<\rho\}$, 
\begin{equation}\begin{split}\label{eqn:g-star}
\vecf{g}_n^*(\vecf{Y}_i,\vecf{X}_i,\vecf{Z}_i,\vecf{\beta},\tau, \rho)
&\equiv \sup\left\{ \vecf{g}_n(\vecf{Y}_i,\vecf{X}_i,\vecf{Z}_i,\tilde{\vecf{\beta}},\tau) : \tilde{\vecf{\beta}}\in B(\vecf{\beta},\rho) \right\}
,\\ 
\vecf{g}_{*n}(\vecf{Y}_i,\vecf{X}_i,\vecf{Z}_i,\vecf{\beta},\tau, \rho)
&\equiv \inf\left\{ \vecf{g}_n(\vecf{Y}_i,\vecf{X}_i,\vecf{Z}_i,\tilde{\vecf{\beta}},\tau) : \tilde{\vecf{\beta}}\in B(\vecf{\beta},\rho) \right\} 
\end{split}\end{equation}
are random variables for all $i$, $\vecf{\beta}\in\mathcal{B}$, and sufficiently small $\rho$ (which may depend on $\vecf{\beta}$), 
where the $\sup$ and $\inf$ are taken separately for each element of the vector. 
(iii) A pointwise WLLN holds for the random vectors in \cref{eqn:g-star}, for each $\vecf{\beta}\in\mathcal{B}$. 
(iv) The data are strictly stationary. 
Then, for a fixed $\tau\in(0,1)$, using the definition in \cref{eqn:def-M-hat}, 
\begin{equation*}%\label{eqn:smooth-Mn-ULLN}
\sup_{\vecf{\beta}\in\mathcal{B}}
\absbig{ \hat{\vecf{M}}_n(\vecf{\beta},\tau) - \E[ \hat{\vecf{M}}_n(\vecf{\beta},\tau) ] }
= o_p(1) . 
\end{equation*}
\end{lemma}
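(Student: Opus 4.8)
The plan is to prove this by the standard bracketing-and-covering argument for a uniform law of large numbers (as in Lemma 2.4 of \citet{NeweyMcFadden94} or Theorem 19.4 of \citet{vanderVaart98}), taking care of the extra difficulty created by the triangular-array dependence of $\vecf{g}_{ni}$ on the bandwidth $h_n$. I work coordinate by coordinate, since controlling each of the $d_Z$ components uniformly controls the norm. Write $\bar{\vecf{M}}_n(\vecf{\beta},\tau)\equiv\E[\hat{\vecf{M}}_n(\vecf{\beta},\tau)]=\E[\vecf{g}_{ni}(\vecf{\beta},\tau)]$, the last equality by strict stationarity (condition (iv)). A key preliminary observation is that \Cref{a:Itilde} gives $\tilde{I}(\cdot)\in[-1,2]$, so each component of $\vecf{g}_{ni}(\vecf{\beta},\tau)$ is bounded in absolute value by $3\abs{\vecf{Z}_i}$; since \Cref{a:Z} implies $\E\abs{\vecf{Z}_i}<\infty$, this furnishes an $n$-free, integrable dominating function that I will use in every dominated-convergence step.

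First I would fix $\eta>0$. For each $\vecf{\beta}\in\mathcal{B}$ and each small $\rho$, the envelopes $\vecf{g}_n^*(\cdot,\vecf{\beta},\tau,\rho)$ and $\vecf{g}_{*n}(\cdot,\vecf{\beta},\tau,\rho)$ of \cref{eqn:g-star} are measurable random vectors by condition (ii), and by construction they sandwich $\vecf{g}_{ni}$: for every $\vecf{\beta}'\in B(\vecf{\beta},\rho)$, $\vecf{g}_{*n}(\cdot_i,\vecf{\beta},\tau,\rho)\le\vecf{g}_{ni}(\vecf{\beta}',\tau)\le\vecf{g}_n^*(\cdot_i,\vecf{\beta},\tau,\rho)$ coordinate-wise. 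Averaging over $i$ and subtracting the corresponding expectation yields, for every $\vecf{\beta}'\in B(\vecf{\beta},\rho)$,
\begin{equation*}
\hat{\vecf{M}}_n(\vecf{\beta}',\tau)-\bar{\vecf{M}}_n(\vecf{\beta}',\tau)
\le \bigl\{\hat{\E}[\vecf{g}_n^*]-\E[\vecf{g}_n^*]\bigr\}
  +\bigl\{\E[\vecf{g}_n^*]-\E[\vecf{g}_{*n}]\bigr\},
\end{equation*}
with an analogous lower bound in terms of $\vecf{g}_{*n}$. The balls $\{B(\vecf{\beta},\rho(\vecf{\beta}))\}$ cover the compact set $\mathcal{B}$ (\Cref{a:B}), so I extract a finite subcover with centers $\vecf{\beta}_1,\dots,\vecf{\beta}_K$ and radii $\rho_1,\dots,\rho_K$. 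Over this finite subcover the stochastic brackets $\hat{\E}[\vecf{g}_n^*]-\E[\vecf{g}_n^*]$ and $\hat{\E}[\vecf{g}_{*n}]-\E[\vecf{g}_{*n}]$ are each $o_p(1)$ by the pointwise WLLN assumed in condition (iii), and a finite maximum of $o_p(1)$ terms is $o_p(1)$. Thus everything reduces to making the deterministic bracketing gap $\E[\vecf{g}_n^*-\vecf{g}_{*n}]$ smaller than $\eta$ for all large $n$.

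I expect this last step to be the main obstacle, precisely because $\vecf{g}_{ni}$ is a triangular array: with $h_n\to0$ the smoothed indicator $\tilde{I}(-\Lambda(\vecf{\beta}')/h_n)$ approaches the discontinuous $\Ind{\Lambda(\vecf{\beta}')\le0}$, so the gap cannot be killed merely by shrinking $\rho$ at fixed $n$, nor by naively exchanging the $\rho\to0$ and $n\to\infty$ limits. The route I would take is to localize the gap to the smoothing window. Because $\tilde{I}$ is flat outside $[-1,1]$, the coordinate-wise envelope difference is nonzero only when some $\vecf{\beta}'\in B(\vecf{\beta},\rho)$ has $\Lambda(\vecf{\beta}')\in(-h_n,h_n)$; using continuity of $\Lambda$ in $\vecf{\beta}$ (\Cref{a:Lambda}) together with $\tilde{I}\in[-1,2]$, this gives
\begin{equation*}
\E\bigl[\vecf{g}_n^*(\vecf{\beta},\tau,\rho)-\vecf{g}_{*n}(\vecf{\beta},\tau,\rho)\bigr]
\le 3\,\E\Bigl[\abs{\vecf{Z}_i}\,\Ind{\inf\nolimits_{\vecf{\beta}'\in B(\vecf{\beta},\rho)}\abs{\Lambda(\vecf{Y}_i,\vecf{X}_i,\vecf{\beta}')}<h_n}\Bigr].
\end{equation*}

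Sending $n\to\infty$ first, dominated convergence (envelope $3\abs{\vecf{Z}_i}$) and $h_n\to0$ replace the indicator event by $\{\inf_{\vecf{\beta}'\in B(\vecf{\beta},\rho)}\abs{\Lambda(\vecf{Y}_i,\vecf{X}_i,\vecf{\beta}')}=0\}$; sending $\rho\to0$ next, continuity of $\Lambda$ collapses this event to $\{\Lambda(\vecf{Y}_i,\vecf{X}_i,\vecf{\beta})=0\}$, which has probability zero by the no-mass-point \Cref{a:Y}, so a final dominated-convergence step gives $\lim_{\rho\to0}\limsup_n\E[\vecf{g}_n^*-\vecf{g}_{*n}]=0$. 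Hence I can choose each radius $\rho(\vecf{\beta})$ so that $\limsup_n\E[\vecf{g}_n^*-\vecf{g}_{*n}]<\eta$; passing to the associated finite subcover and using finiteness of $K$ to obtain a common $N$, the bracketing gap stays below $\eta$ for all $n>N$. Combining this with the $o_p(1)$ sampling errors and letting $\eta\downarrow0$ delivers $\sup_{\vecf{\beta}\in\mathcal{B}}\absbig{\hat{\vecf{M}}_n(\vecf{\beta},\tau)-\bar{\vecf{M}}_n(\vecf{\beta},\tau)}=o_p(1)$, as claimed.
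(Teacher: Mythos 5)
Your proof is correct, and on the decisive technical step it coincides with the paper's; the difference is one of packaging. The paper does not re-derive the covering argument: it verifies Assumptions A1, A2(a,b), and A3 of \citet{Andrews87} and invokes his generic ULLN, whereas you prove that ULLN from scratch via the standard finite-subcover bracketing scheme (sandwich $\vecf{g}_{ni}$ between $\vecf{g}_{*n}$ and $\vecf{g}_n^*$, apply the pointwise WLLN of condition (iii) at the finitely many centers, then control the deterministic gap $\E[\vecf{g}_n^*-\vecf{g}_{*n}]$). The hard part is identical in both: handling the triangular-array nature of $\vecf{g}_{ni}$ in the bracketing gap, which cannot be made small by shrinking $\rho$ at fixed $n$ uniformly in $n$. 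The paper addresses this by computing $\lim_{\rho\to0}\vecf{\Delta}_n$ both for each finite $n$ and for the $n=\infty$ limit (where \Cref{a:Y} kills the mass-at-zero event), to meet Andrews's A3 condition $\lim_{\rho\to0}\sup_{n\ge1}\abs{\vecf{\Delta}_n}=0$; you instead localize the gap to the event $\{\inf_{\vecf{\beta}'\in B(\vecf{\beta},\rho)}\abs{\Lambda(\vecf{Y}_i,\vecf{X}_i,\vecf{\beta}')}<h_n\}$ and take $n\to\infty$ before $\rho\to0$, which needs only $\lim_{\rho\to0}\limsup_n\E[\vecf{g}_n^*-\vecf{g}_{*n}]=0$ — a slightly weaker requirement that suffices because your argument is self-contained and only large $n$ matters for the $o_p(1)$ conclusion. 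Your explicit localization bound is a nice touch the paper leaves implicit. Two small remarks: the step from ``$\tilde{I}(-\Lambda(\vecf{\beta}')/h_n)$ non-constant on the ball'' to ``some $\vecf{\beta}'$ has $\abs{\Lambda(\vecf{\beta}')}<h_n$'' uses connectedness of the ball together with continuity of $\Lambda$ (fine for Euclidean balls in $\mathcal{B}\subseteq\R^{d_\beta}$, and even without it the limiting event as $\rho,h_n\to0$ is still $\{\Lambda(\vecf{Y}_i,\vecf{X}_i,\vecf{\beta})=0\}$, which \Cref{a:Y} handles); and your envelope constant $3\abs{\vecf{Z}_i}$ can be tightened to the paper's $2\abs{\vecf{Z}_i}$ since $\tilde{I}(\cdot)-\tau\in(-2,2)$, though this is immaterial.
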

\begin{proof}
We show that the theorem in \citet{Andrews87} applies. 
The theorem concerns a uniform law of large numbers (ULLN) for a sample average of functions of the data. 
By Comment 6 in \citet{Andrews87}, both the data and the functions may be indexed by both $i$ and $n$. 
In our case, the function $\vecf{g}_n(\cdot)$ is not indexed by $i$ but must be indexed by $n$ since it depends on the sequence of bandwidths, $h_n$. 
We continue to index the observations only by $i$ but note that triangular arrays are permitted by \citet{Andrews87}. 
Since \citet{Andrews87} presumes a scalar-valued function, we write $g_n(\cdot)$; since the dimension of $\vecf{g}_n(\cdot)$ is fixed and finite, uniformity extends immediately to the vector. 

Assumption A1 in \citet{Andrews87} is simply that $\mathcal{B}$ is compact, which is in our \cref{a:B}. 
(More recent work shows that ``compact'' can be replaced by ``totally bounded'' under a metric; see \citet{Andrews92} and \citet{PotscherPrucha94}.) 

Assumption A2(a) in \citet{Andrews87} is a technical measurability assumption; this is assumption (ii) in the statement of \cref{lem:ULLN}. 

Assumption A2(b) in \citet{Andrews87} is assumption (iii) in the statement of the lemma. 
There are many WLLNs for weakly dependent triangular arrays, where dependence is quantified and restricted in various ways; for example, see Theorem 2 in \citet{Andrews88} and the theorems in \citet{deJong98}.  
With iid sampling, sufficient primitive conditions for a WLLN are already in our \cref{a:Z} and \cref{a:Itilde}, respectively: a) $\E\bigl(\norm{\vecf{Z}_i}^2\bigr)<\infty$, and b) $\tilde{I}(\cdot)$ is bounded. 
From \cref{a:Itilde}, $-2\le \tilde{I}(\cdot)-\tau\le 2$, so we have the dominating function $\abs{\vecf{g}_n(\vecf{Y}_i,\vecf{X}_i,\vecf{Z}_i,\vecf{\beta},\tau)} \le 2\abs{\vecf{Z}_i}$. 
Consequently, 
\begin{equation*}
\abs{ \vecf{g}_n^*(\vecf{Y}_i,\vecf{X}_i,\vecf{Z}_i,\vecf{\beta},\tau, \rho) } \le 2\abs{\vecf{Z}_i} , 
\quad
\abs{ \vecf{g}_{*n}(\vecf{Y}_i,\vecf{X}_i,\vecf{Z}_i,\vecf{\beta},\tau, \rho) } \le 2\abs{\vecf{Z}_i} . 
\end{equation*}
If the data are iid, then $\vecf{g}_n^*(\vecf{Y}_i,\vecf{X}_i,\vecf{Z}_i,\vecf{\beta},\tau, \rho)$ is a row-wise iid triangular array. 
Thus, a sufficient condition for a WLLN is $\sup_n \E\bigl[\norm{\vecf{g}_n^*}^2\bigr]<\infty$ (as can be shown with Markov's inequality). 
This condition holds since $\sup_n \E\bigl[\norm{\vecf{g}_n^*}^2\bigr] \le \E\bigl[\norm{2\vecf{Z}_i}^2\bigr] < \infty$ by \cref{a:Z}. 
An extension to independent but not identical sampling follows from a Lindeberg condition for $\vecf{Z}_i$. 
A pointwise WLLN continues to hold with dependence, too, as long as the dependence is not too strong. 

Assumption A3 in \citet{Andrews87} in our notation is 
\begin{equation}\label{eqn:Andrews87-A3}
\lim_{\rho\to0} \sup_{n\ge1} \absbig{
\frac{1}{n} \sum_{i=1}^{n} \left\{ 
  \E\left[ \vecf{g}_n^*(\vecf{Y}_i,\vecf{X}_i,\vecf{Z}_i,\vecf{\beta},\tau, \rho) \right] 
- \E\left[ \vecf{g}_n(\vecf{Y}_i,\vecf{X}_i,\vecf{Z}_i,\vecf{\beta},\tau) \right] 
\right\} } = \vecf{0} , 
\end{equation}
and similarly when replacing $\vecf{g}_n^*$ with $\vecf{g}_{*n}$. 
Since $\vecf{g}_n$ varies with $n$ but not $i$, the strict stationarity in assumption (iv) in \cref{lem:ULLN} implies the summands do not vary with $i$, which simplifies \cref{eqn:Andrews87-A3} to be 
\begin{equation}\label{eqn:Andrews-A3-simple}
\lim_{\rho\to0} \sup_{n\ge1} \abs{ \vecf{\Delta}_n } = \vecf{0} , \;
\vecf{\Delta}_n \equiv 
  \E\left[ \vecf{g}_n^*(\vecf{Y}_i,\vecf{X}_i,\vecf{Z}_i,\vecf{\beta},\tau, \rho) \right] 
- \E\left[ \vecf{g}_n(\vecf{Y}_i,\vecf{X}_i,\vecf{Z}_i,\vecf{\beta},\tau) \right] , \;
\vecf{\Delta}_\infty \equiv \lim_{n\to\infty} \vecf{\Delta}_n . 
\end{equation}
Strict stationarity is not necessary, though, as long as \cref{eqn:Andrews87-A3} still holds. 

A necessary condition for \cref{eqn:Andrews-A3-simple} is pointwise convergence $\lim_{\rho\to0}\vecf{\Delta}_n=\vecf{0}$ for each fixed $n$. 
By \cref{a:Itilde,a:B}, $\vecf{g}_{ni}(\vecf{\beta},\tau)$ is continuous and even differentiable in $\vecf{\beta}$. 
Additionally, as noted above, $2\absbig{\vecf{Z}_i}$ is a dominating function with finite expectation (by \cref{a:Z}), so the dominated convergence theorem gives 
\begin{align*}
\lim_{\rho\to0} \vecf{\Delta}_n 
&= \lim_{\rho\to0} 
\E\left[ \vecf{g}_n^*(\vecf{Y}_i,\vecf{X}_i,\vecf{Z}_i,\vecf{\beta},\tau, \rho) 
        -\vecf{g}_n(\vecf{Y}_i,\vecf{X}_i,\vecf{Z}_i,\vecf{\beta},\tau) \right] 
\\&=
\E\left\{
  \lim_{\rho\to0} 
  \left[ \vecf{g}_n^*(\vecf{Y}_i,\vecf{X}_i,\vecf{Z}_i,\vecf{\beta},\tau, \rho) 
        -\vecf{g}_n(\vecf{Y}_i,\vecf{X}_i,\vecf{Z}_i,\vecf{\beta},\tau) \right] 
\right\}
\\&= \E\{\vecf{0} \}
= \vecf{0} , 
\end{align*}
and similarly for $\vecf{g}_{*n}$, for any $n$. 

For $\vecf{\Delta}_\infty$, as $n\to\infty$, we can again move the limit inside expectations by the dominated convergence theorem, so 
\begin{align*}
\lim_{n\to\infty} 
\E\left[ \vecf{g}_n(\vecf{Y}_i,\vecf{X}_i,\vecf{Z}_i,\vecf{\beta},\tau) \right]
&= \E\left\{
  \lim_{n\to\infty} 
  \vecf{g}_n(\vecf{Y}_i,\vecf{X}_i,\vecf{Z}_i,\vecf{\beta},\tau) 
\right\}
\\&= \E\left\{
  \vecf{Z}_i\left[ \Ind{\Lambda(\vecf{Y}_i,\vecf{X}_i,\vecf{\beta})\le0} - \tau \right] 
\right\} 
\\&= \E\left\{ % USE LIE TO GET IN TERMS OF CONDITIONAL CDF...
  \E\left[ \vecf{Z}_i\left( \Ind{\Lambda(\vecf{Y}_i,\vecf{X}_i,\vecf{\beta})\le0} - \tau \right)
          \mid \vecf{Z}_i \right] 
\right\} 
\\&= \E\left\{ \vecf{Z}_i \left[ 
  \E\left( \Ind{\Lambda(\vecf{Y}_i,\vecf{X}_i,\vecf{\beta})\le0} 
          \mid \vecf{Z}_i \right) - \tau \right] 
\right\} 
\\&= \E\left\{ \vecf{Z}_i \left[ 
  \Pr\left( \Lambda(\vecf{Y}_i,\vecf{X}_i,\vecf{\beta}) \le 0 
          \mid \vecf{Z}_i \right) - \tau \right] 
\right\} 
. 
\end{align*}
Technically, since $\tilde{I}(0/h_n)=0.5$ for any $h_n>0$, the function $\tilde{I}(\cdot/h_n)\to\Ind{\cdot\ge0}-0.5\Ind{\cdot=0}$ as $n\to\infty$, so we have 
\begin{align*}
& \E\left\{ \vecf{Z}_i \left[ 
  \E\left( \Ind{\Lambda(\vecf{Y}_i,\vecf{X}_i,\vecf{\beta})\le0} 
          -0.5 \Ind{\Lambda(\vecf{Y}_i,\vecf{X}_i,\vecf{\beta})=0} 
          \mid \vecf{Z}_i \right) - \tau \right] 
\right\} 
\\&= \E\left\{ \vecf{Z}_i \left[ 
  \Pr\left( \Lambda(\vecf{Y}_i,\vecf{X}_i,\vecf{\beta}) \le 0 
          \mid \vecf{Z}_i \right) 
  - 0.5 \overbrace{\Pr\left( \Lambda(\vecf{Y}_i,\vecf{X}_i,\vecf{\beta}) = 0 
          \mid \vecf{Z}_i \right) }^{=0\textrm{ a.s. by \cref{a:Y}}}
          - \tau \right] 
\right\} . 
\end{align*}
That is, by \cref{a:Y}, the $0.5$ adjustment corresponds to a zero probability event that does not affect the overall expectation. 
For $\vecf{g}_n^*$, similarly, 
\begin{align*}
\lim_{n\to\infty} 
\E\left[ \vecf{g}_n^*(\vecf{Y}_i,\vecf{X}_i,\vecf{Z}_i,\vecf{\beta},\tau,\rho) \right]
&= \E\left\{ \sup_{\tilde{\vecf{\beta}}\in B(\vecf{\beta},\rho)}
               \vecf{Z}_i \left[ 
  \Pr\left( \Lambda(\vecf{Y}_i,\vecf{X}_i,\tilde{\vecf{\beta}}) \le 0 
          \mid \vecf{Z}_i \right) - \tau \right] 
\right\} 
. 
\end{align*}
Consequently, 
\begin{equation*}
\vecf{\Delta}_\infty 
= \E\left\{ \vecf{Z}_i \left[ 
  \Pr\left( \Lambda(\vecf{Y}_i,\vecf{X}_i,\vecf{\beta}) \le 0 
          \mid \vecf{Z}_i \right) - \tau \right] 
- \sup_{\tilde{\vecf{\beta}}\in B(\vecf{\beta},\rho)}
               \vecf{Z}_i \left[ 
  \Pr\left( \Lambda(\vecf{Y}_i,\vecf{X}_i,\tilde{\vecf{\beta}}) \le 0 
          \mid \vecf{Z}_i \right) - \tau \right] 
\right\} . 
\end{equation*}
For this to have a limit of zero as $\rho\to0$ again requires continuity in $\rho$, but the necessary and sufficient conditions are different than for fixed $n$. 
Sufficient conditions here are found in \cref{a:Lambda,a:Y}: $\Lambda(\cdot)$ is continuous in $\vecf{\beta}$, and for any $\vecf{\beta}\in\mathcal{B}$ and almost all $\vecf{z}\in\mathcal{Z}$, the conditional distribution of $\Lambda(\vecf{Y}_i,\vecf{X}_i,\vecf{\beta})$ given $\vecf{Z}_i=\vecf{z}$ is continuous in a neighborhood of zero. 
For example, if $\Lambda(Y_i,\vecf{X}_i,\vecf{\beta})=Y_i-\vecf{X}_i\tr \vecf{\beta}$, then it is sufficient that $Y_i$ has a continuous distribution given almost all $\vecf{Z}_i=\vecf{z}$. 

Given $\lim_{\rho\to0}\vecf{\Delta}_n=\vecf{0}$ for any $n<\infty$ and $n=\infty$, the conclusion $\lim_{\rho\to0} \sup_{n\ge1} \absbig{ \vecf{\Delta}_n } = \vecf{0}$ follows because 
the supremum is attained: $\sup_{n\ge1} \absbig{ \vecf{\Delta}_n } = \absbig{\vecf{\Delta}_k}$ for some $k\ge1$ or $k=\infty$. 
If instead $\lim_{\rho\to0}\vecf{\Delta}_n=\vecf{0}$ only for $n\ge1$, and not with $\lim_{n\to\infty}$, then it would be possible for all $\lim_{\rho\to0}\vecf{\Delta}_n=\vecf{0}$ pointwise but $\lim_{\rho\to0}\sup_{n\ge1}\vecf{\Delta}_n\ne\vecf{0}$; for example if $\Delta_n=(1-1/n)^{1/\rho}$ then all $\lim_{\rho\to0}\Delta_n=0$ but $\sup_{n\ge1}\Delta_n=1$ for any $\rho$, so $\lim_{\rho\to0}\sup_{n\ge1}\Delta_n=1$. 
This is why the calculations for $\vecf{\Delta}_\infty$ are necessary. 

Having verified A1, A2, and A3 in \citet{Andrews87}, his theorem applies, yielding the desired ULLN. 
\end{proof}

\subsection{\texorpdfstring{\Cref{a:CLT}}{Assumption \ref{a:CLT}}}
\label{sec:app-primitive-CLT}

To establish \Cref{a:CLT}, with iid data, the Lindeberg--Feller CLT can be applied as in the proof of Theorem 1 in \citet{KaplanSun17}. 
More generally, \cref{a:CLT} can hold under weak dependence. 
For example, Theorem 3.13 in \citet[Ch.\ 2]{Wooldridge86}, as reproduced in Proposition 1 of \citet{Andrews91CLT}, is a CLT for near epoch dependent triangular arrays that holds under some moment and dependence restrictions. 
The moment restriction, condition (ii), in our notation is $\E\{ \normbig{\vecf{g}_{ni}(\vecf{\beta}_{0\tau},\tau)}^{2+\epsilon} \} < \infty$ for some $\epsilon>0$. 
Since $\absbig{\vecf{g}_{ni}(\vecf{\beta}_{0\tau},\tau)}<2\absbig{\vecf{Z}_i}$, if the underlying $\vecf{Z}_i$ are strictly stationary then $\E( \normbig{\vecf{Z}_i}^{2+\epsilon} ) < \infty$ is sufficient; triangular array data are allowed if $\sup_{i\le n,n\ge1}\E( \normbig{\vecf{Z}_{ni}}^{2+\epsilon} ) < \infty$ for some $\epsilon>0$.

\subsection{\texorpdfstring{\Cref{a:G-est}}{Assumption \ref{a:G-est}}}
\label{sec:app-primitive-G-est}

Unfortunately, for multiple reasons, \Cref{a:G-est} cannot be deduced simply by applying a result like Lemma 4.3 in \citet[p.\ 2156]{NeweyMcFadden94}. 
Fortunately, it is closely related to the well-studied result of consistency of the kernel estimator for the quantile regression asymptotic covariance matrix. 
Since the argument is the same for each row in the matrix, we consider row $k$. 
Plugging in definitions, 
\begin{align}\notag
\dot{\vecf{M}}^{(k,\cdot)}_n(\tau) 
&= \nabla_{\vecf{\beta}\tr } \hat{M}_n^{(k)}(\tilde{\vecf{\beta}},\tau) 
= \frac{1}{n} \sum_{i=1}^{n} \left. \pD{}{\vecf{\beta}\tr } g_{ni}^{(k)}(\vecf{\beta},\tau) \right\rvert_{\vecf{\beta}=\tilde{\vecf{\beta}}} 
\\&= \frac{1}{n} \sum_{i=1}^{n} Z_i^{(k)} 
  \tilde{I}'\bigl( -\Lambda(\vecf{Y}_i,\vecf{X}_i,\tilde{\vecf{\beta}})/h_n \bigr) 
  (-h_n^{-1}) 
  \vecf{D}_i(\tilde{\vecf{\beta}})\tr ,  
\label{eqn:G-est-k}
\\
\vecf{D}_i(\vecf{b})
&\equiv \left. \pD{}{\vecf{\beta} } \Lambda(\vecf{Y}_i,\vecf{X}_i,\vecf{\beta}) \right\rvert_{\vecf{\beta}=\vecf{b}} . 
\label{eqn:def-Di}
\end{align}
By \cref{a:Itilde}, $\tilde{I}'(\cdot)$ is a kernel function. 
The RHS of \cref{eqn:G-est-k} is closely related to the kernel estimator of the usual quantile regression estimator's asymptotic covariance matrix initially proposed under censoring by \citet[eqn.\ (5.6)]{Powell84} and without censoring in \citet{Powell91}, but with two differences: 1) we have $\tilde{\vecf{\beta}}$ instead of $\hat{\vecf{\beta}}$, 2) we have a more general model. 
As a special case of our model, the ``usual'' quantile regression model would have $\vecf{Z}_i=\vecf{X}_i$ and $\Lambda(Y_i,\vecf{X}_i,\vecf{\beta})=Y_i-\vecf{X}_i\tr \vecf{\beta}$, so $\nabla_{\vecf{\beta}}\Lambda(Y_i,\vecf{X}_i,\tilde{\vecf{\beta}})=-\vecf{X}_i$, and \cref{eqn:G-est-k} simplifies to 
\begin{equation}\label{eqn:G-est-QR}
\frac{1}{n} \sum_{i=1}^{n} X_i^{(k)} 
  \tilde{I}'\bigl( (\vecf{X}_i\tr \tilde{\vecf{\beta}}-Y_i)/h_n \bigr) 
  (-h_n^{-1}) 
  (-\vecf{X}_i\tr )
= \frac{1}{nh_n} \sum_{i=1}^{n} 
  \tilde{I}'\left( \frac{Y_i-\vecf{X}_i\tr \tilde{\vecf{\beta}}}{h_n} \right) 
  X_i^{(k)} \vecf{X}_i\tr  , 
\end{equation}
using the symmetry $\tilde{I}'(-u)=\tilde{I}'(u)$ from \cref{a:Itilde}. 
Since $\tilde{\vecf{\beta}}$ lies between $\vecf{\beta}_{0\tau}$ and $\hat{\vecf{\beta}}_{\mathrm{MM}}$, proofs using $\hat{\vecf{\beta}}_{\mathrm{MM}}$ still hold since $\sqrt{n}$-consistency of $\hat{\vecf{\beta}}_{\mathrm{MM}}$ implies $\sqrt{n}$-consistency of $\tilde{\vecf{\beta}}$. 

For \cref{eqn:G-est-QR}, \citet{Kato12} shows consistency (i.e., our \cref{a:G-est}) with both iid and weakly dependent data. 
In fact, he shows the stronger result of asymptotic normality, so some of his assumptions may be weakened if only consistency is required; for example, his $h_n\sqrt{n}/\log(n)\to\infty$ in Assumption 13 can be (slightly) weakened to $h_n\sqrt{n}\to\infty$, 
and not as many moments of $\vecf{X}_i$ are required. 
Specifically, \citet{Kato12} considers strictly stationary $\beta$-mixing data, and the mixing coefficients $\beta(j)$, moments of $\vecf{X}_i$, and bandwidth rate are jointly restricted in his Assumptions 9, 10, and 13 (p.\ 268): $\sum_{j=1}^{\infty} j^\lambda [\beta(j)]^{1-2/\delta} < \infty$ for some $\delta>2$ and $\lambda > 1-2/\delta$; $\E[\norm{\vecf{X}_i}^{\max\{6,2\delta\}}]<\infty$; and for some integer sequence $s_n \to \infty$ with $s_n = o(\sqrt{nh_n})$, $(n/h_n)^{1/2} \beta(s_n) \to 0$. 

For the more general \cref{eqn:G-est-k}, similar conditions are sufficient if $\vecf{D}_i(\vecf{\beta})=\vecf{D}_i$, a random variable depending on $\vecf{Y}_i$ and $\vecf{X}_i$ but not the argument $\vecf{\beta}$. 
This occurs if (and only if) the residual function $\Lambda(\cdot)$ is linear-in-parameters. 
Then, $\vecf{D}_i$ replaces one of the $\vecf{X}_i$ in \citet{Kato12}, while $\vecf{Z}_i$ replaces the other. 
The most notable restriction is on moments of 
$\vecf{D}_i$ (which implies a certain number of finite moments for $\vecf{Y}_i$ and $\vecf{X}_i$) in addition to $\vecf{Z}_i$ (which is already in \cref{a:Z}). 
Many economic variables are bounded or reasonably have infinite moments (e.g., a normal distribution), in which case such moment assumptions are not binding. 
If $\vecf{D}_i(\vecf{b})$ does depend on its argument, then an extension of the argument itself in \citet{Kato12} is necessary. 

In \cref{eqn:G-est-QR}, $\vecf{X}_i$ plays the roles of both the derivative of $\Lambda(Y_i,\vecf{X}_i,\vecf{\beta})$ and the instrument vector, so the PDF in $\matf{G}$ is just conditional on $\vecf{X}_i$; more generally, both the instrument vector and derivative must be conditioned on. 
This can be seen by computing the expectation of \cref{eqn:G-est-k} in a similar manner to the proof of \cref{lem:Mn0-normality}. 
After replacing $\tilde{\vecf{\beta}}=\vecf{\beta}_{0\tau}+O_p(n^{-1/2})$ and dropping the remainder, letting $\vecf{D}_i\equiv \nabla_{\vecf{\beta}}\Lambda(\vecf{Y}_i,\vecf{X}_i,\vecf{\beta}_{0\tau})$ and $\Lambda_i\equiv \Lambda(\vecf{Y}_i,\vecf{X}_i,\vecf{\beta}_{0\tau})$, 
\begin{align*}
\E[ \dot{\vecf{M}}^{(k,\cdot)}_n(\tau)  ]
&\doteq \frac{1}{n} \sum_{i=1}^{n} 
   \overbrace{\E\biggl\{ 
    Z_i^{(k)} 
    \tilde{I}'\bigl( -\Lambda(\vecf{Y}_i,\vecf{X}_i,\vecf{\beta}_{0\tau})/h_n \bigr) 
    (-h_n^{-1}) 
    \pD{}{\vecf{\beta}\tr } \Lambda(\vecf{Y}_i,\vecf{X}_i,\vecf{\beta}) \Bigr\rvert_{\vecf{\beta}=\tilde{\vecf{\beta}}} 
   \biggr\}}^{\textrm{same for all $i$ by \cref{a:XYZ}}}
\\&= \E\left[ Z_i^{(k)} \vecf{D}_i\tr 
       (-h_n^{-1}) \tilde{I}'\bigl( -\Lambda_i/h_n \bigr) 
     \right]
\\&= \E\left\{ Z_i^{(k)} \vecf{D}_i\tr  \E\left[
       (-h_n^{-1}) \tilde{I}'\bigl( -\Lambda_i/h_n \bigr) 
     \mid \vecf{Z}_i,\vecf{D}_i \right] \right\}
\\&= \E\left\{ Z_i^{(k)} \vecf{D}_i\tr  
     \int
       (-h_n^{-1}) \tilde{I}'\bigl( -L/h_n \bigr) 
       f_{\Lambda|\vecf{Z},\vecf{D}}(L\mid \vecf{Z}_i,\vecf{D}_i)\,dL 
     \right\}
\\&= \E\left\{ Z_i^{(k)} \vecf{D}_i\tr  
     \int_{-1}^{1}
       -\tilde{I}'(v) 
       f_{\Lambda|\vecf{Z},\vecf{D}}(-h_n v\mid \vecf{Z}_i,\vecf{D}_i)\,dv 
     \right\}
\\&= -\E\biggl\{ Z_i^{(k)} \vecf{D}_i\tr  
     \int_{-1}^{1}
       \tilde{I}'(v) 
       \bigl[ f_{\Lambda|\vecf{Z},\vecf{D}}(0\mid \vecf{Z}_i,\vecf{D}_i)
\\&\qquad\qquad\qquad\qquad\qquad\qquad
             -f_{\Lambda|\vecf{Z},\vecf{D}}'(-h_nv\mid \vecf{Z}_i,\vecf{D}_i) h_n v
             +\cdots \bigr]\,dv 
     \biggr\}
\\&= -\E\left[ Z_i^{(k)} \vecf{D}_i\tr  
       f_{\Lambda|\vecf{Z},\vecf{D}}(0\mid \vecf{Z}_i,\vecf{D}_i)
     \right]
\\&\quad
- h_n^r \E\biggl\{ Z_i^{(k)} \vecf{D}_i\tr  
     \int_{-1}^{1}
       \tilde{I}'(v) (v^r/r!)
       f_{\Lambda|\vecf{Z},\vecf{D}}^{(r)}(-\tilde{h}v \mid \vecf{Z}_i,\vecf{D}_i)
     \,dv 
     \biggr\}
\\&= -\E\left[ Z_i^{(k)} \vecf{D}_i\tr  
       f_{\Lambda|\vecf{Z},\vecf{D}}(0\mid \vecf{Z}_i,\vecf{D}_i)
     \right]
+ O(h_n^r) 
. 
\end{align*}

Finally, note that \citet{Kato12} does not require the conditional quantile regression model to be true, i.e., his results hold under misspecification; see his remark on p.\ 263 and Assumptions 8--13.

\section{Computational details}
\label{sec:app-comp}

\subsection{Simulation {DGP} details}
\label{sec:app-comp-DGPs}

\noindent \textbf{DGP 1}~
With iid sampling, there is a randomized treatment offer (instrument) 
$Z_i=1$ with probability $1/2$ and $Z_i=0$ otherwise; 
$U_i\sim\UnifDist(0,1)$ (the unobservable); 
$D_i=1$ if $i$ is treated and $D_i=0$ otherwise, with $\Pr(D_i=1\mid Z_i,U_i)=Z_i\min\{1,(4/3)U_i\}$ so that there is imperfect compliance and endogenous self-selection into treatment; 
and $Y_i=\beta(U_i)+D_i\gamma(U_i)$, where the function $\beta(\tau)=60+Q(\tau)$ with $Q(\cdot)$ the quantile function of the $\chi^2_3$ distribution, and $\gamma(\tau)=100(\tau-0.5)$, so there is heterogeneity of the quantile treatment effects.

\noindent \textbf{DGP 2}~
This DGP is a stationary time series regression with measurement error. 
The latent explanatory variable is $Z_t$, with $Z_0\sim\Normalp{0}{1}$, $Z_t=\rho_Z Z_{t-1} +\sqrt{1-\rho_Z^2} \nu_t$, and $\nu_t\stackrel{iid}{\sim}\Normalp{0}{1}$, so $\Varp{Z_t}=1$ for all $t$; we use $\rho_Z=0.5$. 
The measurement error is $\eta_t\stackrel{iid}{\sim}\Normalp{0}{1}$, and $X_t=Z_t+\eta_t$ is the observed (mismeasured) explanatory variable. 
Since the $\eta_t$ are independent, the lagged $X_{t-1}$ provides a valid instrument. 
The outcome is $Y_t=\gamma Z_t+\epsilon_t$ with $\gamma=1$ and $\epsilon_t$ unobserved. 
Finally, $\epsilon_t=\rho_\epsilon \epsilon_{t-1} + \sqrt{1-\rho_\epsilon^2} V_t$, $\epsilon_0\sim\Normalp{0}{1}$, $V_t\stackrel{iid}{\sim}\Normalp{0}{1}$, so the marginal distribution is $\epsilon_t\sim\Normalp{0}{1}$ for all $t$, and the series $\{\epsilon_t\}$, $\{\eta_t\}$, and $\{\nu_t\}$ are mutually independent. 
Letting $\beta(\tau)$ be the $\tau$-quantile of $\epsilon_t-\gamma\eta_t$, since $Y_t=\gamma Z_t+\epsilon_t=\gamma X_t +\epsilon_t-\gamma\eta_t$, we have the quantile restrictions $\Pr(Y_t-\gamma X_t-\beta(\tau) \le 0) = \Pr(\epsilon_t-\gamma\eta_t \le \beta(\tau))=\tau$, and $\Pr(\epsilon_t-\gamma\eta_t \le \beta(\tau) \mid X_{t-1})=\Pr(\epsilon_t-\gamma\eta_t \le \beta(\tau))=\tau$ since $X_{t-1}\independent \eta_t,\epsilon_t$, so the IVQR intercept and slope parameters are $\beta(\tau)$ and $\gamma=1$, respectively.

\noindent \textbf{DGP 3}~
This DGP is identical to DGP 2 except that 
$\epsilon_t=\rho_\epsilon \epsilon_{t-1} + (1-\rho_\epsilon) V_t$, $\epsilon_0\sim\textrm{Cauchy}$, $V_t\stackrel{iid}{\sim}\textrm{Cauchy}$, so the marginal distribution is $\epsilon_t\sim\textrm{Cauchy}$ for all $t$. 
% https://en.wikipedia.org/wiki/Cauchy_distribution#Transformation_properties

\noindent \textbf{DGP 4}~
This DGP is for a log-linearized quantile Euler equation with over-identification, similar to the empirical application in \cref{sec:EIS}. 
The discount factor is $\beta_\tau=0.99$, the the EIS is $1/\gamma_\tau=0.2$. 
There are four excluded instruments, $Z_{j,t}=\rho_j Z_{j,t-1} + \eta_{j,t}$ for $j=1,2,3,4$, with $\rho_j=0.2j$. 
The vector $(\eta_{1,t},\eta_{2,t},\eta_{3,t},\eta_{4,t})$ is multivariate normal with unit variances and $\Corr(\eta_{j,t},\eta_{k,t})=0.1$ for $j \ne k$. 
% , $Z_{j,t} \independent Z_{k,t}$ for $j \ne k$, and $\sigma_{Z,j}=2j$ (so $\sigma^2_{\eta,j}=\sigma^2_{Z,j}(1-\rho_j^2)$). 
Then, $X_t = \delta_0 + \delta_1 Z_{1,t} + V_t( \delta_2 Z_{2,t} + \delta_3 Z_{3,t} + \delta_4 Z_{4,t} ) + V_t$, and $Y_t = \ln(\beta_\tau)/\gamma_\tau + X_t/\gamma_\tau + U_t$. 
Error term vector $(U_t,V_t)$ is bivariate normal with standard deviations $\sigma_U=2$ and $\sigma_V=2$, and covariance $\sigma_{UV}=0.8$. 
Linking to \cref{sec:EIS}, $Y_t$ represents 
% $\ln(C_{t+1}/C_t)$ 
the log consumption ratio, and $X_t$ represents the log real interest rate. 
% $\ln(1+r_{t+1})$. 

\subsection{Bandwidth selection}
\label{sec:app-comp-h}

For 
% consistency, only $h_n\to0$ is required, so just 
estimation, simply picking the smallest possible bandwidth seems reasonable and performs well in simulations. 
For linear models, the plug-in bandwidth from \citet{KaplanSun17} also seems to work well. 
In our experience, this ``optimal'' bandwidth from \citet{KaplanSun17} is usually much larger than the smallest numerically feasible value. 
For example, this is suggested by the figures in Section 7.3 of \citet{KaplanSun17}, where the plug-in optimal bandwidth is clearly well above the smallest fixed bandwidth they used. 
The same is true in their empirical example (Section 6), and it is again true in our empirical example, where the smallest feasible bandwidth is (almost) always below $0.0001$ while the optimal bandwidth is (usually) between $0.001$ and $0.05$.

For the estimator to attain asymptotic normality, with dependent data, it seems $nh_n^2\to\infty$ is required for \Cref{a:G-est}, which implies $h_n$ must be larger than $n^{-1/2}$. 
The smallest possible bandwidth is too small, but the bandwidth from \citet{KaplanSun17} satisfies this condition. 
% Additionally, in the linear IVQR case with iid sampling, \citet{KaplanSun17} find the (approximate) MSE-optimal bandwidth rate to be $n^{-1/(2r-1)}$, where $r$ is the order of the kernel function $\tilde{I}'(\cdot)$, like $r=4$ for the function used in the code. 
% To try to obtain the rate $n^{-1/7}$, we first find the smallest possible bandwidth $h_0$ and the corresponding number of smoothed observations $n_0$, i.e., for how many $i$ does $-h \le \Lambda(\vecf{Y}_i,\vecf{X}_i,\hat{\vecf{\beta}}) \le h$. 
% Then, we take $h=h_0(n^{6/7}/n_0)$. 
% This is ad hoc, but seems to perform reasonably. 
% % appears sufficient for at least Studentized bootstrap tests to be accurate in samples of even $n=100$. 
% 
Alternatively, one could experiment with a variation of the AMSE-optimal bandwidth (for estimating $\matf{G}$) proposed in \citet{Kato12}. 
With a linear model, this is easily done by replacing one of the $\vecf{X}$ in the rule-of-thumb formula in \citet{Kato12} with our $\vecf{Z}$, leaving the other $\vecf{X}$ as the regressor vector that in our notation includes the endogenous regressors in $\vecf{Y}$ and exogenous regressors in $\vecf{X}$. 
% This is what we used for our simulations, where it seemed to work better than the more ad hoc procedure above. 
More careful consideration of inference-optimal bandwidths is left to future work (in progress).

\singlespacing
\bibliographystyle{ecta} %chicago,elsarticle-harv...ecta, econometrica...
% \bibliography{nlqriv}

\end{document}